\numberwithin{equation}{section}
\newtheorem {theorem}    {Theorem}[section]
\newtheorem {lemma}      [theorem]    {Lemma}
\newtheorem {corollary}  [theorem]    {Corollary}
\newtheorem {proposition}[theorem]    {Proposition}
\theoremstyle{definition}
\newtheorem {definition} [theorem]    {Definition}
\newtheorem {remark}    [theorem]    {Remark}
\newcounter{AbcT}
\numberwithin{equation}{section}
\newcommand {\equ}[1]     {\eqref{#1}}
\newcommand {\cond}   {\:|\:}
\newcommand {\A} {{\mathbb A}}
\newcommand {\N} {{\mathbb N}}
\newcommand {\Q} {{\mathbb Q}}
\newcommand {\R} {{\mathbb R}}
\newcommand {\T} {{\mathbb T}}
\renewcommand {\H} {{\mathbb H}}
\newcommand {\Z} {{\mathbb Z}}
\newcommand {\cP} {{\mathcal P}}
\DeclareMathOperator{\supp}{supp}
\DeclareMathOperator{\SL}{SL}
\DeclareMathOperator{\PGL}{PGL}
\newcommand {\IGNORE}[1]  {}
\newcommand {\absolute}[1] {\left| {#1} \right|}
\newcommand{\nope} {}
\newcommand{\G}{\mathbf{G}}
\newcommand{\GG}{\mathbb{G}}
\newcommand{\Zd}{\mathbb{Z}^d}
\newcommand{\n}{\mathbf{n}}
\newcommand{\ba}{\mathbf{a}}
\newcommand{\urad}{\UU_{\textrm{rad}}}
\newcommand\vol{\operatorname{vol}}
\newcommand\md{\operatorname{mod}}
\newcommand\fp{\operatorname{\mathfrak{p}}}
\newcommand\fg{\operatorname{\mathfrak{g}}}
\newcommand\fl{\operatorname{\mathfrak{l}}}
\newcommand\Hh{\operatorname{H}}
\newcommand\cA{\mathcal{A}}
\newcommand\Ad{\operatorname{Ad}}
\newcommand{\LL}{\mathbb{L}}
\newcommand{\HH}{\mathbb{H}}
\newcommand{\FF}{\mathbb{F}}
\newcommand{\UU}{\mathbb{U}}
\newcommand{\VV}{\mathbb{V}}
\newcommand{\ZZ}{\mathbb{Z}}
\newcommand{\MM}{\mathbb{M}}
\begin{document}

	\title[Joinings of higher-rank torus actions]{Joinings of higher rank torus actions on homogeneous spaces}
	\author[M. Einsiedler]{Manfred Einsiedler}
	\address[M. E.]{ETH Z\"urich, R\"amistrasse 101
	CH-8092 Z\"urich
	Switzerland}
	\email{manfred.einsiedler@math.ethz.ch}
	\author[E. Lindenstrauss ]{Elon Lindenstrauss}
	\address[E. L.]{The Einstein Institute of Mathematics\\
	Edmond J. Safra Campus, Givat Ram, The Hebrew University of Jerusalem
	Jerusalem, 91904, Israel}
	\email{elon@math.huji.ac.il}
	\thanks{M.~E.~acknowledges the support by the SNF (Grant 200021-127145 and 200021-152819).
	E.~L.~acknowledges the support of the ERC (AdG Grant 267259), the Miller Institute and MSRI. The authors gratefully acknowledge the support of
	the Israeli Institute for Advanced Studies at the Hebrew University, where a good portion
	of this work was carried out under ideal working conditions.}
	\date{February 2015}
	\begin{abstract}
	We show that joinings of higher rank torus actions on $S$-arithmetic quotients of semi-simple or perfect algebraic groups 
	must be algebraic.
	\end{abstract}
	\maketitle
	
	\section{Introduction}\label{intro}
	
Higher rank diagonalizable actions have subtle rigidity properties, which are remarkable because the action of each individual element of the action has no rigidity. In this paper we establish one manifestation of this rigidity: a scarcity of joinings of fairly general higher rank actions on quotients of semisimple and more generally perfect linear algebraic groups, in stark contrast to the situation in the rank one case.
The systematic use of joinings as a powerful tool in ergodic theory was introduced by Furstenberg in his important paper \cite{Furstenberg-disjointness-1967}. The same paper also introduces the study of the rigidity properties of the action of a non-lacunary multiplicative semigroup of integers on $\R / \Z$, which is closely connected to the type of actions we consider here, and motivated much of the research in the area.

We recall that a joining of two measure preserving actions of a group $\Lambda$ on probability measure spaces $(X _ 1, \mathcal{B} _ 1, \mu _ 1)$ and $(X _ 2, \mathcal{B} _ 2, \mu _ 2)$ is a $\Lambda$-invariant probability measure $\rho$ on $X _ 1 \times X  _ 2$ (equipped with the product sigma-algebra $\mathcal{B} _ 1 \times \mathcal{B} _ 2$) whose projections to $X _ 1$ and $X _ 2$ are equal to $\mu _ 1$ and $\mu _ 2$ respectively. More generally one can (as we do in this paper) consider $r$-fold joinings of $(X _ i, \mathcal{B} _ i, \mu _ i)$ for $i = 1, \dots,r$ which would be a measure on $X _ 1 \times \dots \times X _ r$. While joinings can be considered for general group actions, in this paper we consider the case of $\Lambda = \Z ^ d$.
There is always at least one joining: the trivial joining given by $\rho = \mu _ 1 \times \dots \times \mu _ r$. Two $\Lambda$-actions for which the trivial joining is the only joining are said to be \emph{disjoint}. 

Joinings, especially scarcity of joinings, provide important information on the measure preserving $\Lambda$-action at hand. 
For instance, any nontrivial factor of the action of $\Lambda$ on a probability space $(X, \mathcal{B}, \mu)$, i.e. a measure-preserving map $\phi$ from $X$ to another probability space $(Y, \mathcal{C}, \nu)$ on which $\Lambda$ acts intertwining the action,
gives rise to nontrivial self-joining called the relatively independent joining $\mu \times _ Y \!\mu$ of $X$ with itself over~$Y$. The relatively independent self-joining has the property that~$\mu \times _ Y \!\mu$-a.e.~$(x_1,x_2) \in X \times X$ satisfies $\phi (x_1)= \phi (x_2)$, hence is certainly not the trivial joining. 

Scarcity of joinings is also useful for establishing equidistribution, allowing us under favorable conditions to upgrade equidistribution of a collection of points in two spaces to a joint equidistribution statement on an appropriate collection of points in a product space. 

The classification of joinings for higher rank abelian algebraic actions is a special case of the problem of classifying invariant measures under higher rank abelian groups. This measure classification question was raised by Furstenberg following his work on orbits of nonlacunary semigroups of integers on $\R / \Z$ in \cite{Furstenberg-disjointness-1967}, and conjectures in this vein in the context of actions on homogeneous spaces were made by Katok and Spatzier \cite[Main Conj.]{KatokSpatzier96} and in a more explicit form by Margulis \cite[Conj.\ 2]{Margulis-conjectures}; cf.\ also \cite[Conj. 2.4]{Einsiedler-Lindenstrauss-ICM}. Whether in the original context of actions of nonlacunary semigroups of integers or in the case of the action of higher rank abelian groups on homogeneous spaces all progress regarding this measure classification, e.g.~\cite{Rudolph-2-and-3,KatokSpatzier96,Lindenstrauss-03,Einsiedler-Katok-Lindenstrauss},
has been obtained under the assumption of positive entropy. Even assuming positive entropy, the state-of-the-art regarding classifications of invariant measures on quotients of semisimple groups requires that the acting group be a maximal split torus in at least some of the simple normal subgroups \cite{full-torus-paper}. By their very nature, for the class of actions we consider here joinings always have positive entropy; in fact, as we show in \S\ref{support of joinings}, not only do joinings have positive entropy, but the joining assumption also implies some finer information regarding the support of the corresponding leafwise measures on the coarse Lyapunov foliations (foliations whose leaves are obtained by intersecting unstable foliations of finitely many elements of the action) which is crucial for our argument. This allow us to classify joinings of essentially arbitrary higher rank actions on semisimple quotients.

In this paper we consider $S$-arithmetic quotients of $\Q$-perfect groups: more precisely, we take $\mathbb G$ to be a Zariski connected linear algebraic group over $\Q$, which we assume to be perfect, i.e.~$[\GG,\GG]=\GG$, and $S$ to be a finite set of places.
We set~$\Q_\infty=\R$ and write
$\Q_S=\prod_{\sigma\in S}\Q_\sigma$ for the corresponding product of the local
fields determined by the given set of places. An \emph{$S$-arithmetic quotient of $\GG$} is a quotient space of the form $\Gamma\backslash G$ where~$G\leq\GG(\Q_S)$ is a finite index subgroup, $\Gamma<G \cap \GG(\Q)$ is an arithmetic lattice commensurable to $\GG(\mathcal{O}_S)$,
and~$\mathcal{O}_S$ denotes the ring of $S$-integers in $\Q_S$. 

\begin{definition}\label{def:saturated}
Let $\Gamma\backslash G$ be an $S$-arithmetic quotient with $\GG, S, G, \Gamma$ as above.
We say that $\Gamma\backslash G$ is \emph{saturated by unipotents} if the group generated by all unipotent
elements of $\GG(\Q_S)$ acts ergodically on~$\Gamma\backslash G$.
\end{definition}

For $x \in \Gamma\backslash G$, it will be convenient for us to use the notation $g.x$ for $xg^{-1}$.
If $\GG$ is a $\Q$-almost simple group with $\GG(\Q_S)$ noncompact, and if $\Gamma\backslash G$ is an $S$-arithmetic quotient of $\GG$, then there is a finite index normal subgroup $G_1 \unlhd G$ so that $(\Gamma\cap G_1)\backslash G_1$ is saturated by unipotents. More generally, the same is true whenever $\GG$ is a $\Q$-group whose radical equals its unipotent radical $\urad$, and so that $\GG/\urad$ has no $\Q$-almost simple factor $\HH$ so that $\HH(\Q_S)$ is compact --- cf.\ Remark~\ref{saturated by unipotent remark}.

\begin{definition}\label{def:algebraic measure}
We say that a measure $\mu$ on an $S$-arithmetic quotient $\Gamma\backslash G$ as above is \emph{algebraic over $\Q$} if
there exists 
a~$\Q$-subgroup~$\HH\leq\GG$ and a finite index subgroup~$H\leq\HH(\Q_S)$
such that~$\mu$ is the normalized Haar measure on a single orbit~$\Gamma H g$
for some~$g\in G$.
\end{definition}

\begin{definition} \label{def:class-A}
A subgroup $A < G$ is said to be of \emph{class-$\cA'$} if it can be simultaneously diagonalized so that for every $a \in A$
\begin{itemize}
\item
 the projection of $a$ to $\GG(\R)$ has positive real eigenvalues if $\infty \in S $,
\item for each finite $p\in S$, the projection of $a $ to $\GG(\Q _ p)$ satisfies that all of its eigenvalues
are powers of $\lambda_{p}$, where $\lambda_{p}\in\Q_p^{*}$
is some fixed invertible element with $|\lambda_{p}|_{p}\neq 1$.
\end{itemize} 
An element $g \in G \setminus\{e\}$ is said to be of class-$\cA'$ if the group it generates is of class-$\cA'$. A homomorphism $a : \Z^d \to G$ is said to be of class-$\cA'$ if it is a proper map and its image $a(\Z^d)$ is of class-$\cA'$.
\end{definition}

Our definition of class-$\mathcal{A}'$ elements is more general than the notion of class-$ \mathcal{A}$ elements introduced by Margulis and Tomanov in \cite{Margulis-Tomanov}, though for the purposes of \cite{Margulis-Tomanov,Margulis-Tomanov-almost-linear} one can use this more general class equally well.

We can now state our first joining theorem:

\begin{theorem} %[Joinings classification for higher rank actions]
\label{higher rank}
Let $r,d\geq 2$ and let $\GG_1,\ldots,\GG_r$ be
semisimple algebraic groups defined over~$\Q$ that are~$\Q$-almost simple, $\GG=\prod_{i=1}^r\GG_i$, and $S$ be a finite set of places of $\Q$.
Let $X_i = \Gamma _ i \backslash G _ i$ be $S$-arithmetic quotients saturated by unipotents for $G_i \leq \GG_i(\Q_S)$  and let $X= \prod_ i X_i$. Let $a_i : \Z ^ d \to G _ i$ be proper homomorphisms so that $a=(a_1, \dots, a_r) : \Z ^ d \to G= \prod_ i G_i$ is of class-$\mathcal{A}'$, and set $A = a(\Z ^ d)$. Suppose $\mu$ is an $A$-invariant and ergodic joining of the actions of $A_i = a_i(\Z^d)$ on $X_i$ equipped with the Haar measure $m_{X_i}$. Then $\mu$ is an {\em algebraic measure} defined over $\Q$.
\end{theorem}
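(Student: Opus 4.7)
The plan is to exhibit a nontrivial unipotent subgroup under which the joining $\mu$ is invariant, and then invoke the $S$-arithmetic measure classification of Margulis--Tomanov for unipotent flows. The starting point is entropy: since $\mu$ projects to the Haar measure $m_{X_i}$ on each factor $X_i$, the entropy of $\mu$ with respect to any $a(n) \in A$ equals the sum of the positive Lyapunov exponents of $a(n)$ acting on $G = \prod G_i$, so $\mu$ is a measure of maximal entropy for every element of $A$. Combined with the joining structure, this places us precisely in the setting anticipated in \S\ref{support of joinings}: for every coarse Lyapunov weight $\alpha$ of the $A$-action on $G$ the leafwise measure of $\mu$ on the corresponding coarse Lyapunov leaf has a prescribed ``product-like'' support, determined by the Haar leafwise measures on the factors together with holonomy data encoding the correlations of the joining.

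The core step is to upgrade these leafwise measures to honest translation invariance. Two complementary mechanisms should be available. First, the high entropy method of Einsiedler--Katok, adapted to the $S$-arithmetic setting as in \cite{Einsiedler-Katok-Lindenstrauss,full-torus-paper}, exploits commutator relations between distinct coarse Lyapunov subgroups to promote the product support of leafwise measures into genuine translation invariance along a unipotent subgroup coming from one of the factors. When the high entropy method does not directly apply --- essentially when the leafwise measure on a coarse Lyapunov in the product is concentrated on the graph of a holonomy between leaves in two different factors --- one invokes the low entropy / Lindenstrauss-style argument, using polynomial divergence along a second, independent element of the rank-$d$ torus to force invariance along an unstable direction transverse to the graph. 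Both the assumption $d \geq 2$ and the class-$\mathcal{A}'$ hypothesis are crucial here: the rank provides the second independent direction needed for polynomial divergence, and the class-$\mathcal{A}'$ structure gives the uniform divergence estimates at both archimedean and non-archimedean places.

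Once any nontrivial unipotent invariance has been extracted, the Margulis--Tomanov classification of measures invariant under unipotent groups on $S$-arithmetic quotients \cite{Margulis-Tomanov,Margulis-Tomanov-almost-linear} implies that $\mu$ is supported on a single orbit of a closed $\Q_S$-algebraic subgroup $H$. The standard analysis identifying the stabilizer of a finite-volume orbit shows that this subgroup is defined over $\Q$, so $\mu$ is algebraic over $\Q$ in the sense of Definition~\ref{def:algebraic measure}. The saturation-by-unipotents hypothesis on each $X_i$ is used precisely to ensure that the $\Q$-group so produced is large enough to project onto each factor, and is thus compatible with the joining condition rather than merely describing a measure supported on a proper sub-joining. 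The principal obstacle throughout is the passage from the product support of the leafwise measures to genuine translation invariance: a priori this support could be concentrated on a nontrivial graph encoding the joining, and excluding this possibility (short of the measure reducing to the trivial product joining) is where the interaction of the rank $d \geq 2$, the class-$\mathcal{A}'$ hypothesis, and the finer structural information from \S\ref{support of joinings} must all be brought to bear simultaneously.
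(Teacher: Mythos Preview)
Your broad outline --- extract unipotent invariance from entropy/leafwise considerations, then feed into Ratner/Margulis--Tomanov --- matches the paper's strategy. However, there are two genuine problems.

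First, the entropy claim is false. A joining $\mu$ does \emph{not} have maximal entropy for $a(n)$ acting on $G=\prod G_i$; equality $h_\mu(a)=\sum_i h_{m_{X_i}}(a_i)$ would force $\mu$ to be the trivial product joining. What the joining hypothesis actually gives is that each \emph{projection} $\pi_i{}_*\mu=m_{X_i}$ has maximal entropy on its factor, and the paper converts this (via a relative entropy inequality, Proposition~\ref{entropy inequality proposition} and Corollary~\ref{large P corollary}) into the statement that for every coarse Lyapunov weight $[\alpha]$ the support group $P^{[\alpha]}$ satisfies $\pi_i(P^{[\alpha]})=G_i^{[\alpha]}$. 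This is the correct replacement for your ``product-like support'' claim, and it is weaker than maximal entropy: $P^{[\alpha]}$ can be a proper subgroup of $G^{[\alpha]}$ (indeed a graph over the factors), which is exactly the obstruction you allude to later.

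Second, your endgame is too quick. After producing a unipotent $U$ with $\mu$ $U$-invariant, applying Theorem~\ref{MTclassA} does \emph{not} immediately give that $\mu$ itself is algebraic: it gives a $\Q$-group $\HH$ and shows $\mu$ is $H$-invariant and supported on $\Gamma N_G^1(\HH)g$. One must then analyze $N_\GG(\HH)^\circ$ via the structure lemma (Lemma~\ref{joining-groups}), split off the factors on which $\HH$ already projects isomorphically, and run an \emph{induction on $r$} on the remaining factors. Your sketch omits this induction, and also misidentifies the role of saturation by unipotents: in the paper that hypothesis enters through weak mixing (Proposition~\ref{fancy-Mautner}) rather than to guarantee that $\HH$ projects onto each factor --- the latter comes directly from the joining condition and the support being in $\Gamma N_G^1(\HH)g$.

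A smaller point: your description of the low entropy step as ``polynomial divergence along a second, independent element of the torus'' does not match what is done. The paper fixes $a\in A$ with $\alpha_0(a)=0$ (this is where $d\geq 2$ enters), applies the low entropy dichotomy (Theorem~\ref{low entropy theorem}) to $P^\pm$, and rules out the aligned case not by torus divergence but by faithful recurrence along the nontrivial unipotent group $P^{[\alpha_0]}$ combined with an arithmeticity argument (Lemma~\ref{simple one}) forcing certain lattice elements to be central.
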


We note that since it has to project onto each of the $\GG_i$, the $\Q$-group $\HH$ corresponding to $\mu$ as in Definition~\ref{def:algebraic measure} is semisimple (cf.\ Proposition~\ref{joining-groups} for details).

Theorem~\ref{higher rank} implies that joinings of such higher rank actions must be trivial unless there are strong relationships between at least some of the $\Q$-groups $\GG_1,\ldots,\GG_r$. Indeed, if the joining is non-trivial then two of the~$\Q$-subgroups~$\GG_i$ have the same adjoint form over $\Q$. We also note that 
in a sense made precise by the following corollary the case of two factors can be viewed as the most fundamental:

\begin{corollary}%[Triviality on pairs]
\label{cor:pairs}
  Assume in addition to the assumptions in Theorem~\ref{higher rank} that for any two factors~$X_i$ and~$X_j$ with~$i\neq j$, the image of~$\mu$ under the natural projection map to~$X_i\times X_j$
  is the product measure~$m_{X_i}\times m_{X_j}$ .
Then~$\mu$ is the trivial joining, i.e.~$\mu=m_X=m_{X_1\times\cdots \times X_r}$.
\end{corollary}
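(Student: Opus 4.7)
The plan is to feed the pairwise-product hypothesis into Theorem~\ref{higher rank} and then extract a purely group-theoretic rigidity statement about the $\Q$-subgroup underlying $\mu$. First, I would apply Theorem~\ref{higher rank} directly to $\mu$ to produce a $\Q$-subgroup $\HH\le\GG=\prod_i\GG_i$ and a finite index subgroup $H\le\HH(\Q_S)$ such that $\mu$ is the normalized Haar measure on a single closed orbit $\Gamma H\cdot g$. As noted right after Theorem~\ref{higher rank}, and made precise in Proposition~\ref{joining-groups}, the fact that $\mu$ projects to $m_{X_i}$ for every $i$ already forces $\HH$ to be semisimple and to surject onto each $\GG_i$.

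Next, I would translate the additional hypothesis on pairwise projections into surjectivity of $\HH$ onto every pair $\GG_i\times\GG_j$. For each $i\neq j$, the projection $\pi_{ij}(\mu)$ is an algebraic measure on $X_i\times X_j$ supported on an orbit of the image $\pi_{ij}(H)$, and the assumption $\pi_{ij}(\mu)=m_{X_i}\times m_{X_j}$ forces that orbit to be all of $X_i\times X_j$. By a standard finite-index and Zariski-closure argument, using that $\GG_i\times\GG_j$ is Zariski connected, this upgrades to $\pi_{ij}(\HH)=\GG_i\times\GG_j$ as $\Q$-groups.

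It then remains to establish the following purely group-theoretic fact, which I would prove by induction on $r$: if $\HH\le\prod_{i=1}^r\GG_i$ is a $\Q$-subgroup with each $\GG_i$ being $\Q$-almost simple, and $\HH$ surjects onto $\GG_i\times\GG_j$ for all $i\neq j$, then $\HH=\prod_i\GG_i$. The base $r=2$ is tautological. For the induction step, the projection $\HH':=\pi_{1,\dots,r-1}(\HH)$ still surjects onto each pair within $\{1,\dots,r-1\}$ and hence by induction equals $\prod_{i=1}^{r-1}\GG_i$; viewing $\HH\le\HH'\times\GG_r$ with both factor projections surjective, Goursat's lemma produces normal $\Q$-subgroups $N\unlhd\prod_{i=1}^{r-1}\GG_i$ and $N_r\unlhd\GG_r$ together with a $\Q$-isomorphism of the two quotients. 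If $N_r\neq\GG_r$, then $\GG_r/N_r$ is a nontrivial $\Q$-almost simple quotient, which forces $N$ to contain all but one factor, say $\GG_{j_0}$; but then $\pi_{j_0,r}(\HH)$ sits inside the graph of an isogeny between $\GG_{j_0}$ and $\GG_r$, contradicting the surjectivity of $\HH$ onto $\GG_{j_0}\times\GG_r$. Hence $N_r=\GG_r$, $\HH=\GG$, and $\mu=m_X$. The main obstacle is the careful bookkeeping of central isogenies and finite-index subgroups in the Goursat step, but the $\Q$-almost simplicity of the $\GG_i$ ensures no genuine identification can arise.
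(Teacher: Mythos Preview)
Your proof is correct, and it follows the same opening move as the paper: apply Theorem~\ref{higher rank} to obtain the $\Q$-group $\HH$, then use the pairwise hypothesis to force $\HH=\GG$. The difference lies in how the group-theoretic step is carried out. The paper appeals directly to the structural conclusion of Proposition~\ref{joining-groups}: that lemma already decomposes $\HH$ as a direct product $\GG'_1\times\cdots\times\GG'_t$, where each $\GG'_j$ is isogenous to (and projects onto) a block $\GG_{s_{j-1}+1},\dots,\GG_{s_j}$ of the original factors. The pairwise-product assumption then immediately forces every block to have size one, since otherwise projecting to two factors in the same block would yield the graph of an isogeny rather than the full product. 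You instead only extract from Proposition~\ref{joining-groups} the semisimplicity and surjectivity onto single factors, and then rebuild the needed structural information by hand via Goursat's lemma and induction on $r$. Both arguments are valid; the paper's is shorter because the structural lemma has already done the work, while yours is more self-contained and makes explicit exactly where the pairwise hypothesis enters. Your comment about bookkeeping of central isogenies is apt but not a real obstruction: in the Goursat step, if $N_r\neq\GG_r$ then $\Q$-almost simplicity forces $N_r$ to be finite central, and the resulting identification of $\GG_r/N_r$ with a quotient of $\prod_{i<r}\GG_i$ does pick out a single factor $\GG_{j_0}$ modulo center, exactly as you sketch.
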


We also consider the following more general setup: Let~$\LL=\GG_1\times\cdots\times\GG_r$ be a semisimple~$\Q$-group with $\Q$-almost
simple factors~$\GG_i$ for~$i=1,\ldots,r$, let~$\UU_{\textrm{rad}}$ be a 
Zariski connected unipotent
algebraic group defined over~$\Q$ such that~$\LL$
acts on~$\UU_{\textrm{rad}}$ by automorphisms and the representation is defined over~$\Q$.
We then define~$\GG=\LL\ltimes\UU_{\textrm{rad}}$, which is an algebraic
group defined over~$\Q$, with the canonical projection map~$\pi:\GG\to\LL$ whose kernel is~$\UU_{\textrm{rad}}$.
We will assume that~$\GG$ is perfect, i.e.~$\GG=[\GG,\GG]$
(or equivalently~$\UU_{\textrm{rad}}=[\LL,\UU_{\textrm{rad}}]$, since otherwise $[\GG,\GG]$ would be contained in the proper subgroup of $\GG$ generated by $\LL$, $[\LL,\UU_{\textrm{rad}}]$ and $[\UU_{\textrm{rad}},\UU_{\textrm{rad}}]$). 
We now set~$G=L\ltimes \UU_{\textrm{rad}}(\Q_S)\leq\GG(\Q_S)$, where~$L=G_1\times\cdots\times G_r$ and each $G_i$ a finite index sub-group of $\GG_i(\Q_S)$,  
and take~$\Gamma< G$ an arithmetic lattice 
compatible with the~$\Q$-structure of~$\GG$ and such that~$X=\Gamma\backslash G$ is saturated by unipotents. In order to minimize the difference in the setups, we will
assume for simplicity that the lattice $\pi(\Gamma)$ in $L$ is a product of lattices $\Gamma_1\times\cdots\times\Gamma_r$ with each $\Gamma_i$ a lattice in~$G_i$.

\begin{theorem}%[Lift of algebraicity to semidirect products]
\label{thm:unipotent}
	Let~$r\geq 1$, $d\geq 2$,~$G$ and $X$ be as above. Let $a_i : \Z ^ d \to G _ i$ be proper homomorphisms so that $a=(a_1, \dots, a_r) : \Z ^ d \to G= \prod_ i G_i$ is of class-$\mathcal{A}'$, and set $A = a(\Z ^ d)$. Let~$\mu$ be an~$A$-invariant and ergodic probability measure
	on~$X$ such that~$\pi_*(\mu)$ is the Haar measure on~$(\Gamma_1\times\cdots\times\Gamma_r)\backslash L$.
	Then~$\mu$ is an algebraic measure defined over~$\Q$.
\end{theorem}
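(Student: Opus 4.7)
The plan is to use the assumption that $\pi_*\mu$ is the Haar measure on $(\Gamma_1 \times \cdots \times \Gamma_r)\backslash L$ to propagate invariance from the base to $\mu$, and then to apply the measure rigidity techniques for higher-rank diagonal actions to the fibers of $\pi$, which are translates of $\urad(\Q_S)$. As a preliminary step, I decompose $\Lie(G)$ according to the coarse Lyapunov foliation of the $A$-action. Since $\GG$ is perfect we have $\urad = [\LL, \urad]$, so the $\Ad$-action of $A$ on $\Lie(\urad)(\Q_S)$ has no zero weights; every coarse Lyapunov direction arising from $\urad$ is nontrivially contracted by some element of $A$ and expanded by another.

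The first main step is to lift invariance from the base. Because $\pi_*\mu$ is Haar, the leafwise measures of $\mu$ along coarse Lyapunov directions lying in $\Lie(L)$ must project onto the corresponding leafwise measures of Haar downstairs, which are themselves Haar on the full horospherical subgroups of $L$. Using the entropy--leafwise-measure correspondence (as deployed in the proof of Theorem~\ref{higher rank} and in \cite{Einsiedler-Katok-Lindenstrauss}), this forces the upstairs leafwise measure along each such direction to be translation invariant along the whole horospherical subgroup, possibly twisted by an $\urad(\Q_S)$-valued cocycle. In other words, $\mu$ becomes invariant under a family of twisted horospherical subgroups of $G$ that surject onto the corresponding horospherical subgroups of $L$.

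The second step handles the coarse Lyapunov directions lying entirely inside $\urad(\Q_S)$. Here I apply the high-entropy and low-entropy methods underlying the proof of Theorem~\ref{higher rank}: because $d \geq 2$ and each such direction is genuinely expanded by some element of $A$ while other directions are contracted or preserved, the leafwise measure along every such direction is shown to be either trivial or Haar on a closed subgroup. Combining the invariances from the two steps yields a closed subgroup $H \leq G$, of the form $L_0 \ltimes \urad_0(\Q_S)$ up to conjugation by some $u \in \urad(\Q_S)$, with $L_0$ generated by horospherical subgroups of $L$ and $\urad_0 \leq \urad$ a closed subgroup, under which $\mu$ is invariant.

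Finally, using the $A$-ergodicity of $\mu$ together with the measure classification for unipotent-group actions on $S$-arithmetic homogeneous spaces (Ratner's theorem in its $S$-arithmetic form), one concludes that $\mu$ is the Haar measure on a single orbit of an algebraic $\Q$-subgroup of $\GG$, which is the statement. The main obstacle is the second step above, and more precisely verifying that $\urad_0$ is defined over $\Q$ rather than merely over $\Q_S$; to this end one must combine the $\Q$-rationality of the representation of $\LL$ on $\urad$, the compatibility of $\Gamma$ with the $\Q$-structure, the saturation-by-unipotents hypothesis on $X$, and the way in which the twisted horospherical invariances of the first step interact with $\urad_0$ through commutators to force $\urad_0$ to be a $\Q$-subgroup.
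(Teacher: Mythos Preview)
Your proposal has two concrete gaps, and the overall strategy diverges from how the paper actually proceeds.

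First, the preliminary claim that $\urad=[\LL,\urad]$ forces the $\Ad$-action of $A$ on $\Lie(\urad)(\Q_S)$ to have no zero weights is false. Take $\LL$ simple and $\urad$ abelian with Lie algebra the adjoint representation of $\LL$: this is irreducible (so $[\LL,\urad]=\urad$), yet the Cartan direction has zero weight for any diagonalizable $A\leq L$. So $\urad$ can and typically does contain $A$-centralized directions, and your subsequent claim that every coarse Lyapunov direction in $\urad$ is expanded by some element of $A$ fails.

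Second, and more seriously, your first main step over-claims what the entropy argument delivers. From $\pi_*\mu$ being Haar one does obtain (via Proposition~\ref{equality for coarse Lyapunov proposition} and Corollary~\ref{large P corollary}) that the Zariski closure $P^{[\alpha]}$ of $\supp\mu_x^{[\alpha]}$ surjects onto $L^{[\alpha]}$. This is a statement about the \emph{support} of the leafwise measures, not about their translation invariance. You assert that the entropy--leafwise-measure correspondence forces $\mu_x^{[\alpha]}$ to be Haar along a twisted horospherical lift of $L^{[\alpha]}$; but maximal entropy of the projection only pins down the entropy contribution, not the measure itself. The pushforward of $\mu_x^{[\alpha]}$ to $L^{[\alpha]}$ being Haar does not make $\mu_x^{[\alpha]}$ Haar on any subgroup of $G^{[\alpha]}$. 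Without this, you have no unipotent invariance from step one, and step two has nothing to build on.

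The paper's route is different and avoids both issues. It does not separate $L$-directions from $\urad$-directions. Instead, using only the support information $\pi(P^{[\alpha]})=L^{[\alpha]}$, it verifies the hypothesis of the low entropy theorem (Theorem~\ref{low entropy theorem}) for $U=P^-$, via Lemma~\ref{shearing} applied both to $\LL$ and to the quotients $\mathfrak u_k/\mathfrak u_{k+1}$ of the lower central series of $\urad$. Then the high/low entropy dichotomy produces \emph{some} $A$-normalized unipotent invariance (Proposition~\ref{unipotent invariance proposition 2}). Only at that point does one invoke Ratner--Margulis--Tomanov (Theorem~\ref{MTclassA}), and the $\Q$-rationality you worry about at the end comes for free from Tomanov's description of the possible algebraic supports; the argument finishes by induction on $\dim\GG$, quotienting by $\HH\cap\urad$ when nontrivial, or splitting off a semisimple factor otherwise.
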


Combining Theorem~\ref{higher rank} and Theorem~\ref{thm:unipotent} we can get a classification of joinings of class-$\mathcal{A}'$ actions on quotient spaces of perfect groups analogous to Theorem~\ref{higher rank}:

\begin{theorem}\label{thm:perfect}
Let $r,d\geq 2$ and let $\GG_1,\ldots,\GG_r$ be perfect algebraic groups defined over~$\Q$, $\GG=\prod_{i=1}^r\GG_i$, and $S$ be a finite set of places of $\Q$.
Let $X_i = \Gamma _ i \backslash G _ i$ be $S$-arithmetic quotients for $G_i \leq \GG_i(\Q_S)$ saturated by unipotents and let $X= \prod_ i X_i$. Let $a_i : \Z ^ d \to G _ i$ be homomorphisms so that $a=(a_1, \dots, a_r) : \Z ^ d \to G= \prod_ i G_i$ is of class-$\mathcal{A}'$, and such that the projection of $a_i$ to every $\Q$ almost simple factor of $\GG_i(\Q_S)$ is proper.
Suppose $\mu$ is an $A$-invariant and ergodic joining of the action of $A_i = a_i(\Z^d)$ on $X_i$ equipped with the Haar measure $m_{X_i}$. Then $\mu$ is an algebraic measure defined over $\Q$.
\end{theorem}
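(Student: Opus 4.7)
The plan is to chain Theorem~\ref{higher rank} and Theorem~\ref{thm:unipotent}: first I push $\mu$ down to the semisimple quotient of $\GG$ and use Theorem~\ref{higher rank} to classify the pushforward, then lift this algebraicity back to $\mu$ using Theorem~\ref{thm:unipotent}. Since each $\GG_i$ is perfect, the Levi decomposition gives $\GG_i = \LL_i \ltimes \UU_i$ with $\LL_i$ semisimple and $\UU_i$ the unipotent radical, and perfection forces $\UU_i = [\LL_i,\UU_i]$. Writing each $\LL_i = \prod_j \LL_{i,j}$ as a product of $\Q$-almost simple factors, we get $\GG = \LL \ltimes \UU$ with $\LL = \prod_{i,j}\LL_{i,j}$ and $\UU = \prod_i \UU_i$; let $\pi\colon\GG\to\LL$ be the canonical projection.

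First I would analyze $\nu = \pi_*\mu$. Its marginal on each $X_i$ is Haar by the joining hypothesis, hence its marginal on the $S$-arithmetic quotient of $\LL_i$ is Haar, and further its marginal on each $S$-arithmetic quotient $Z_{i,j}$ of the $\Q$-almost simple $\LL_{i,j}$ is Haar. Since $\mathcal{O}_S$-points of a product are up to finite index the product of the factors' $\mathcal{O}_S$-points, after passing to a finite cover if necessary we may regard $\nu$ as an $A$-invariant ergodic joining of the Haar measures on the $Z_{i,j}$. The composition $\pi\circ a$ is again of class-$\cA'$ with proper projection to each $\LL_{i,j}(\Q_S)$: any nontrivial class-$\cA'$ element has non-unipotent eigenvalues (they cannot simultaneously be positive reals at the archimedean place and powers of $\lambda_p$ at a finite place with $|\lambda_p|_p \ne 1$ and also be unipotent), so $\pi\circ a$ is injective with discrete image, and properness on each $\Q$-almost simple factor is a direct hypothesis. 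Saturation by unipotents descends from $X_i$ to each $Z_{i,j}$. Theorem~\ref{higher rank}, together with Proposition~\ref{joining-groups}, then produces a semisimple $\Q$-subgroup $M \leq \LL$ surjecting onto each $\LL_{i,j}$, such that $\nu$ is the Haar measure on a single $M$-orbit.

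Next I would form $\tH = \pi^{-1}(M) = M\ltimes\UU$, a $\Q$-subgroup of $\GG$. Since $\pi_*\mu = \nu$ is supported on a single $M$-orbit, $\mu$ itself is concentrated on a single $\tH(\Q_S)$-orbit in $X$; after translation, $\mu$ becomes an $A$-invariant ergodic probability measure on an $S$-arithmetic quotient of $\tH$ whose projection to the semisimple quotient $M$ of $\tH$ is the full Haar measure. This is precisely the setup of Theorem~\ref{thm:unipotent} applied to $\tH$ in place of $\GG$ (with the $\Q$-almost simple factors of $M$ in the role of the $\GG_i$ there), provided $\tH$ is itself perfect and the quotient is saturated by unipotents; granting these, Theorem~\ref{thm:unipotent} yields that $\mu$ is algebraic over $\Q$.

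The principal technical obstacle is verifying that $\tH$ is perfect, equivalently that $[M,\UU] = \UU$. When $M = \LL$ this is immediate from the perfection of $\GG$; but $M$ may be a proper subgroup of $\LL$, embedded diagonally across matched $\Q$-almost simple factors, and then $\UU$ can in principle contain $M$-invariants absent as $\LL$-invariants. The intended approach is a Goursat-type analysis: decompose each $\UU_i$ into $\LL_i$-isotypic pieces and, using the explicit way $M$ identifies isomorphic $\Q$-almost simple factors of $\LL$, rule out $M$-invariants on each piece. Alternatively, one may replace $\tH$ with the smallest perfect $\Q$-subgroup of $\GG$ whose $(\Q_S)$-orbit already supports $\mu$ before invoking Theorem~\ref{thm:unipotent}. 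Once this algebraic compatibility between the joining on the semisimple quotient and the unipotent structure is settled, the two preceding theorems combine to give the conclusion.
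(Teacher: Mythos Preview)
Your approach is correct and matches the paper's proof. The obstacle you flag --- that $\tH = M \ltimes \UU$ might fail to be perfect --- dissolves once you use the full strength of the joining hypothesis rather than only its consequence at the level of the almost-simple factors $\LL_{i,j}$.

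The point is that $\mu$ projects to Haar on each $X_i$, hence $\nu = \pi_*\mu$ projects to Haar on the quotient of each $\LL_i$ (not merely on each quotient of $\LL_{i,j}$). Therefore the $\Q$-group $M$ supporting $\nu$ surjects onto each $\LL_i$ as a whole. Since the $\GG_i$ are direct factors of $\GG$, the action of $\LL$ on $\UU_i$ factors through its projection to $\LL_i$; in particular the action of $M$ on $\UU_i$ factors through the surjection $M \to \LL_i$, giving $[M,\UU_i] = [\LL_i,\UU_i] = \UU_i$ directly from the perfection of $\GG_i$. Summing over $i$ yields $[M,\UU] = \UU$, so $\tH$ is perfect. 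The paper dispatches this in a single sentence (``Because of the joining assumption, and since every $G_i$ is perfect, $G'$ is perfect''); no Goursat analysis or passage to a smaller perfect subgroup is needed.
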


Our assumption that the acting group $A$ is of class-$\mathcal A'$ is simultaneously necessary for the theorems above to hold as stated, and not particularly material since these theorems can be modified in a fairly straightforward way to include much more general diagonalizable groups. We provide extensive discussion of this issue in \S\ref{construction}.

\subsection{Arithmetic applications}
There seems to be substantial scope for applying the theorems presented above to arithmetic questions. A concrete result in this direction by M.~Aka, U.~Shapira and the first named author \cite{AES} considers for all integer points $(x,y,z)$ of the sphere $x ^2 + y ^2 + z ^2 = m$ both the normalized point $m ^ {-1/2} (x,y,z)$ on the unit sphere $S^2$ and the shape of the lattice orthogonal to $(x,y,z)$ in $\Z ^ 3$, which corresponds to a point in $\PGL (2, \Z) \backslash \H$. Using Theorem~\ref{higher rank} it is shown that these collections of \emph{pairs} in $S ^2 \times \PGL (2, \Z) \backslash \H$ become equidistributed as $m\to \infty$ along
squarefree\footnote{As explained in \cite{AES}, the square free assumption is in fact not needed, though without it the required number theoretical facts are not well documented.} integers, satisfying both $m \not\equiv 0,4,7 \bmod 8$ (a necessary and sufficient condition for existence of primitive integral points on the sphere of radius $\sqrt{m}$) as well as an auxiliary condition of Linnik type---namely that~$(-m)$
is a quadratic residue modulo two distinct fixed primes. In an appendix of that paper by Ruixiang Zhang a connection is made between this equidistribution result and the theory of Eisenstein series for maximal parabolic subgroups of $\SL(3)$.

This result can be viewed as a special case of the following more general arithmetic  equidistribution result (though to us the above application came as a surprise).
In order to be able to state concisely cases in which the assumptions of the theorem below can be verified we state it in terms of the adeles, though strictly speaking for the purposes of this paper this is mostly a matter of style rather than substance.
For further discussion of why and how the adelic formulation is natural in this context we refer the reader to \cite{ELMV3}.
If $\GG$ is a $\Q$-group and $\mu$ is a measure on a space on which $\GG(\A)$ acts we say that $\mu$ is \emph{nearly invariant} under $\GG(\A)$ if it is invariant under a finite index subgroup of $\GG(\Q_s)$ for any $s \in \{\infty,\textup{primes}\}$. 
Note that if $\GG$ is semisimple and simply connected, by strong approximation any probability measure on $\GG(\Q)\backslash \GG(\A)$ nearly invariant under~$\GG(\A)$ is in fact invariant, hence is equal to the uniform measure on $\GG(\Q)\backslash \GG(\A)$.

\begin{theorem}\label{arithmetic theorem}
Let $\A$ be the ring of adeles over~$\Q$, and let $\mathbb G _ 1$, \dots, $\mathbb G _ r$ be $\Q$-almost simple groups 
such that no two of these groups are isogenous over~$\Q$. Let $S _ 0$ be a finite set of places for $\Q$, and $N\in \N$. Let $\mathbb T_\alpha$ be a sequence of $\Q$-anisotopic $\Q$-tori. We think of both the $\mathbb G _ k$ and the $\mathbb T_\alpha$ as algebraic groups embedded in $\SL(N)$ for some $N$, and require that they are defined by polynomials of degree $\leq N$.
For each $\alpha$ 
let $m_\alpha$ be a probability measure on $\mathbb T_\alpha(\Q)\backslash \mathbb T_\alpha(\A)$ (e.g.\ the Haar measure).
For every $\alpha$ and $k=1, \dots,r$ let $\iota ^{(\alpha)} _ k : \mathbb T_\alpha \to \mathbb G _ k$ be homomorphism, given by polynomials of degree $\leq N$ and with zero dimensional kernel, 
and $g _ k ^ {( \alpha )}$ an arbitrary element in $\mathbb G _ k (\A)$. 
Assume the following two conditions hold:
\begin{description}[font=\normalfont,leftmargin=2\parindent,labelindent=\parindent]
\item[Uniform split rank] For any $\alpha$, the group $\mathbb T_\alpha (\Q _ {S _ 0})$ has $\Q_{S_0}$-rank $\geq 2$ and $m_\alpha$ is invariant under a subgroup of the $\Q_{S_0}$-split part of $\mathbb T_\alpha (\Q _ {S _ 0})$ of index $\leq N$.
\item[Individual equidistribution] For any $k$, the measures $g_k ^ {( \alpha )}. \bigl (\iota _ k ^ {( \alpha )} \bigr)\! \vphantom{|} _ {*} (m _ \alpha)$ on $X^{(k)}_{\A}=\mathbb G _ k (\Q)\backslash \mathbb G _ k (\A) $ converge as $\alpha\to\infty$ to a nearly $\GG_k(\A)$ invariant probability measure.
\end{description}
Then any limit point of the sequence of measures on
\(
 X_{\A}= X_{\A}^{(1)}\times\dots\times X_{\A}^{(r)}
\)
obtained by pushing $m _ \alpha$ forward via the map
\begin{equation*}
t \mapsto (g^{(\alpha)}_1. \iota ^ {( \alpha )} _ 1(t), \dots, g^{(\alpha)}_r. \iota ^ {( \alpha )} _ r(t))
\end{equation*}
is nearly invariant under $\GG_1\times\dots\times\GG_r(\A)$.
\end{theorem}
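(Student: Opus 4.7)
The plan is to take a subsequential limit $\mu$ of the pushforward measures on $X_\A$, produce a sufficiently rich class-$\mathcal{A}'$ action preserving it, descend to an $S$-arithmetic quotient in order to apply Theorem~\ref{higher rank}, and then use the non-isogeny hypothesis to force the resulting algebraic joining to be the product measure. The individual equidistribution hypothesis gives immediately that the projection of $\mu$ to each factor $X_\A^{(k)}$ is nearly $\GG_k(\A)$-invariant, so the one-dimensional marginals are already of algebraic type; the whole content of the theorem is to upgrade this to a joint statement.

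First I would extract a class-$\mathcal{A}'$ limit action on $\mu$. Since the $\mathbb T_\alpha$ and the $\iota_k^{(\alpha)}$ are all cut out by polynomials of degree $\leq N$ in $\SL(N)$, a standard compactness argument in the space of algebraic subgroups of bounded complexity lets me pass to a further subsequence along which the images in $\prod_k \GG_k(\Q_{S_0})$ of the $\Q_{S_0}$-split parts of $\mathbb T_\alpha$ (conjugated as required by the shifts $g_k^{(\alpha)}$) converge to a common split $\Q_{S_0}$-subtorus $T$ of rank $\geq 2$; the uniform rank lower bound and ``index $\leq N$'' bound in the hypothesis are exactly what force the $A_\alpha$-invariances to pass to $T$-invariance of $\mu$ in the limit. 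Choosing a $\Z^d$-lattice ($d \geq 2$) in $T$ and passing to a finite-index subgroup to enforce positivity at $\infty$ and the powers-of-$\lambda_p$ condition at finite places produces a proper class-$\mathcal{A}'$ homomorphism $a = (a_1, \dots, a_r) : \Z^d \to \prod_k \GG_k(\Q_{S_0})$ leaving $\mu$ invariant.

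Next I would descend to the $S$-arithmetic setting and apply Theorem~\ref{higher rank}. For any finite set of places $S \supseteq S_0$ and any compact open $K \leq \prod_k \GG_k(\A^S)$, push $\mu$ forward to $X = \prod_k \GG_k(\Q) \backslash \GG_k(\A) / K$, a finite union of products of $S$-arithmetic quotients. The resulting $A$-invariant measure $\bar\mu$ has each marginal equal to the Haar measure on an $S$-arithmetic quotient saturated by unipotents (possibly after restricting to a finite-index connected component whose existence is guaranteed by the near-invariance of the adelic marginals). Its $A$-ergodic components are therefore $A$-invariant and ergodic joinings of Haar measures, and Theorem~\ref{higher rank} classifies each such component as an algebraic measure over $\Q$, supported on an orbit of a finite-index subgroup of $\HH(\Q_S)$ for some $\Q$-subgroup $\HH \leq \prod_k \GG_k$ that projects onto each factor~$\GG_k$.

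Finally, since each $\GG_k$ is $\Q$-almost simple and no two of them are $\Q$-isogenous, Proposition~\ref{joining-groups} forces such an $\HH$ to be the whole product $\prod_k \GG_k$ (up to finite index), so every ergodic component -- and hence $\bar\mu$ -- is nearly $\prod_k \GG_k(\Q_S)$-invariant. Letting $S$ exhaust the places of $\Q$ and $K$ shrink to $\{e\}$ then yields the claimed near $\prod_k \GG_k(\A)$-invariance of $\mu$. The hardest part I expect is the second step: producing and controlling the class-$\mathcal{A}'$ limit action uniformly from the varying bounded-complexity sequence of tori (including the effect of the shifts $g_k^{(\alpha)}$), together with the bookkeeping in the $S$-arithmetic descent to verify that almost every $A$-ergodic component of $\bar\mu$ satisfies the Haar-marginal and saturation-by-unipotents hypotheses of Theorem~\ref{higher rank}. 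The non-isogeny hypothesis enters cleanly only at the very end, but is essential there: without it, nontrivial ``diagonal'' joinings arising from $\Q$-isogenies between factors would survive the algebraic classification.
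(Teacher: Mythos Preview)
Your outline captures the overall architecture of the argument (descend to an $S$-arithmetic quotient, classify ergodic joinings, pass back to the adeles), and your use of the non-isogeny hypothesis at the end via Proposition~\ref{joining-groups} is exactly right. However, there is a genuine gap in your first step, and it is not a technicality.

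You claim that by a compactness argument the conjugated split tori $g^{(\alpha)} \cdot (\text{split part of } \iota^{(\alpha)}(\mathbb T_\alpha)) \cdot (g^{(\alpha)})^{-1}$ subconverge to a split torus $T$ of rank $\geq 2$, and that the limit measure $\mu$ is $T$-invariant. But the shifts $g_k^{(\alpha)} \in \GG_k(\A)$ are completely arbitrary, with no boundedness assumption whatsoever. When the $g_k^{(\alpha)}$ run off to infinity, conjugation by them can degenerate a split torus: in the Grassmannian of Lie subalgebras, a sequence of conjugates of a Cartan subalgebra need not converge to a Cartan subalgebra but can converge to a nilpotent subalgebra. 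In that situation the limit measure $\mu$ acquires invariance under a \emph{unipotent} one-parameter subgroup, not under any diagonal group, and Theorem~\ref{higher rank} is simply not applicable. Your compactness argument on bounded-complexity subgroups controls the \emph{unconjugated} tori (this is essentially Lemma~\ref{local-torus}), but it says nothing about what survives after conjugation by an unbounded sequence.

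The paper handles this by an explicit dichotomy. Writing $\underline{t}_j$ for the infinitesimal generators of the two cocharacters, one separates into a \emph{bounded case}, where $\operatorname{Ad}_{g^{(\alpha)}}(\underline{t}_j)$ stays bounded along a subsequence, and an \emph{unbounded case}, where it diverges. In the bounded case Lemma~\ref{EMS-distortion} lets one replace $g^{(\alpha)}$ by a bounded sequence with the same effect on $\underline{t}_j$, after which the conjugated tori genuinely converge and your argument goes through: one obtains a class-$\mathcal A'$ $\Z^2$-action on the limit and applies Theorem~\ref{higher rank}. In the unbounded case Lemma~\ref{unipotent limit} shows that, after rescaling, the conjugated one-parameter subgroups converge to a nontrivial unipotent one-parameter subgroup $U$; one then invokes the $S$-arithmetic Ratner/Margulis--Tomanov classification (Theorem~\ref{MTclassA}) for $U$-ergodic components, and the non-isogeny hypothesis again forces the supporting $\Q$-group to be the full product on the factors where $U$ projects nontrivially, allowing an induction on the number of factors. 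In short: you need both Theorem~\ref{higher rank} \emph{and} measure rigidity for unipotent flows, and the bridge between them (the bounded/unbounded dichotomy with Lemmas~\ref{EMS-distortion}--\ref{unipotent limit}) is exactly the piece missing from your proposal.
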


Theorem~\ref{arithmetic theorem} is proved in \S\ref{arithmetic section}.
In addition to Theorem~\ref{higher rank}, following Eskin, Mozes and Shah \cite{Eskin-Mozes-Shah,Eskin-Mozes-Shah-nondivergence}, 
Ratner's measure classification theorem \cite{Ratner-Annals} (or more accurately its generalization to the $S$-arithmetic context by Ratner \cite{Ratner-padic} and Margulis-Tomanov \cite{Margulis-Tomanov}) is used to handle the cases where the torus is distorted enough by the $g_k^{(\alpha)}$ to become unipotent in the limit.

A key point is that the individual equidistribution condition is known to hold in several natural and very interesting classes of $\Q$-groups $\GG$. Since individual equidistribution concerns only one group, we may as well assume the sequence of anisotropic $\Q$-tori $\mathbb T_\alpha$ are already embedded in $\GG$. 
To be able to state the known results we will need to have some way of measuring complexity (or volume) of a shifted adelic torus orbit 
$\GG(\Q) \backslash \GG(\Q) \mathbb T(\A) g^{-1}$, and we shall use the one from \cite{ELMV3}:
\[
\vol(\GG(\Q) \backslash \GG(\Q) \mathbb T(\A) g^{-1}) = \vol\{t \in T(\A): g t g^{-1} \in \Omega_0\}^{-1}
\]
where $\Omega_0$ is a fixed compact neighborhood of the identity in $\G(\A)$ and the volume on $\mathbb T(\A)$ is normalized so that $\mathbb T(\Q) \backslash \mathbb T (\A)$ has volume 1.

The situation is most satisfactory when $\GG=\PGL(2,F)$, for a number field $F$  (with $\PGL(2,F)$ considered as a $\Q$-group by restriction of scalars) or $\PGL(1, B_F)$ where $B_F$ is a quaternion division algebra over a number field $F$, which we again consider as a $\Q$-group (here we use $\PGL(1, B_F)$ to denote the groups of invertible elements of the quaternion algebra modulo the center of $B_F$). 
A combination of the results of several important papers (for instance, Duke's paper \cite{Duke-hyperbolic} which gives a result close to the assertion below for $F=\Q$) can be summarized by saying that in this case if $\mathbb T_\alpha$ is a sequence of maximal (in the sense of dimension) anisotropic tori in $\GG$, \ $g_\alpha \in \GG(\A)$ with $\vol(\GG(\Q) \backslash \GG(\Q) \mathbb T_\alpha(\A) g_\alpha^{-1})\to\infty$ and if $m_\alpha$ is the Haar measure on $\mathbb T_\alpha (\Q) \backslash \mathbb T_\alpha(\A)$ (which we consider as a probability measure on $X_\A=\GG(\Q)\backslash \GG(\A)$) then any weak$^*$-limit of $g_\alpha.m_\alpha$ as $\alpha \to \infty$ is nearly invariant under~$\GG(\A)$, and moreover appropriately formulated this convergence holds with a polynomial rate\footnote{In particular, for any smooth test function $f$ that depends on only a finite set of places~$S$, there is a function $\bar f$ invariant under a finite index subgroup of $\GG(\Q_S)$, so that $\absolute{\int (f - \bar f) \,d (g_\alpha.\mu_\alpha)} \ll_f\vol \bigl(\GG(\Q) \backslash \GG(\Q) \mathbb T_\alpha(\A) g_\alpha^{-1}\bigr)^{-\delta}$ for some $\delta>0$. The subgroup depends only on $S$, not on $f$.}.
By the work of Harcos and Michel \cite{Harcos-Michel} it is even possible to consider in this case measures $m_\alpha$ that are given by Haar measures on large enough subgroups of $\mathbb T_\alpha (\Q) \backslash \mathbb T_\alpha(\A)$. For a complete list of references and further discussion see \cite[\S4.5--4.6]{ELMV3}.
Under the splitting condition we make here, and without rates, this result can also be deduced (at least for $F=\Q$) from the work of Linnik and Skubenko \cite{Linnik-book}. 

Another case where the individual equidistribution condition has been established (without rate) is for $\GG=\PGL(3)$ and $\mathbb T_\alpha$ a sequence of maximal $\Q$-tori in $\GG$ with a splitting condition that is stronger than that assumed here, namely that for some place $s \in \{\infty,\textup{primes}\}$ the tori $\mathbb T_\alpha$ are $\Q_s$-split. In this case Ph. Michel, A. Venkatesh and the authors \cite{ELMV3} show similarly that if $g_\alpha$ is a sequence of elements in $\GG(\A)$ so that $\vol(\GG(\Q) \backslash \GG(\Q) \mathbb T_\alpha(\A) g_\alpha^{-1})\to\infty$  then any
weak~$^*$ limit of $g_\alpha.m_\alpha$ 
is nearly invariant under~$\GG(\A)$.

\subsection*{Acknowledgements} 
We thank the anonymous referee for her (or his) careful reading of our paper and comments. We also thank Menny Aka, Yves Benoist, Phillipe Michel, Uri Shapira, Jacob Tsimerman and Akshay Venkatesh for helpful discussions.

\section{Notation and preliminaries}

\subsection{Linear algebraic groups}\label{sec-places}

We recall some basic properties of algebraic groups, and refer the reader to \cite{Springer}. We use this theory as a natural framework that makes no distinction between the real and $p$-adic numbers. 

Let $ \Q _ \infty = \R$ and let
$ \sigma$ be $ \infty$ or a prime $p$ so that $\Q _ \sigma$ is either $\R$ or $\Q_p$. Let $| \cdot|_\sigma$ denote the absolute value if $ \sigma = \infty$ or the $p$-adic norm if $\sigma = p$.

Let $G=\G(\Q _ \sigma)$  be the $\Q _ \sigma$-points of a Zariski connected linear 
algebraic group $\G$ defined over $ \Q _ \sigma$. 
A subgroup $U < G$ is {\em unipotent} if for every $g \in U$,
$g-e$ is a nilpotent matrix, i.e.\ for some $n$, \ $(g-e)^n=0$ where $e$ is
identity. A subgroup $H\leq G$ is said to be {\em normalized by} $g  \in G$ if $ g H g ^{-1} =
H$; $H$ is normalized by $L\leq G$ if it is normalized by every $g  \in
L$; and the {\em normalizer} $N_G(H)$ of $H$ is the group of all $g
\in G$ normalizing it.
Furthermore, $N_G^1(H)$ denotes the subgroup of the normalizer that in addition also preserves the Haar measure on $H$.
 Similarly, $g$ {\em centralizes} $H$ if
$gh=hg$ for every $h  \in H$, and we set $C_G(H)$, the {\em
centralizer} of $H$ in $G$, to be the group of all $g  \in G$
centralizing $H$.

\subsection{Weights and Lyapunov weights}
Let $ \mathfrak g $ be the Lie algebra of $ G $ and let
\begin{equation*}
\operatorname{Ad} _ g: \mathfrak g \rightarrow \mathfrak g \qquad\mbox{ for } g \in G
\end{equation*}
be the adjoint representation of $ G $ on $ \mathfrak g $.

Let $a$ be a class-$\mathcal{A}'$ homomorphism $\Z ^ d \to G$ and $A = a (\Z ^ d)$.
Then the adjoint action 
of $A$ on the Lie algebra $\mathfrak g$ of $G$ decomposes $\mathfrak g$ into a 
direct sum of joint eigenspaces of all elements of $A$. Each such joint 
eigenspace corresponds to a character $\alpha:A\to\Q_\sigma^\times$ which we 
consider also as a map from $\Zd$ to $\Q_\sigma^\times$ (using the homomorphism~$a:\Zd\to A$). It follows from Definition~\ref{def:class-A} that $\alpha(\n)=\lambda_p^{\n\cdot\ba_\alpha}$ for some fixed
$\ba_\alpha\in\Zd$ and $\lambda_p\in\Q_p$ if~$\sigma=p$ and~$\alpha(\n)=e^{\n\cdot\ba_\alpha}$
for some~$\ba_\alpha\in\R^d$ if~$\sigma=\infty$. We will refer to 
$\alpha$ as a {\em weight} and to the eigenspace $\mathfrak g^\alpha$ as the 
{\em weight space}. 
The eigenspace for the {\em trivial weight} will 
be denoted by $\mathfrak g^0$, i.e.\ $\mathfrak g^0$ is the Lie algebra of 
$C_G(A)$, the centralizer of $A$ in $G$.

For each weight $\alpha$ we define its \emph{Lyapunov weight} $\log|\alpha(\n)|_\sigma=(\n\cdot\ba_\alpha)\log|\lambda|_\sigma$ which is a linear map from~$\Z^d$ to $\R$. The Lyapunov weight $0$ corresponding to the trivial weight we again call trivial. Below we will always work with Lyapunov weight since they are better suited for a uniform treatment of all places.  The class-$\mathcal A'$
assumption implies in the case of a finite place~$\sigma=p$
that the corresponding Lyapunov weights are real multiples of rational linear maps. 

Let $\Phi$ denote the set of nontrivial Lyapunov weights  $\alpha$ (with non-trivial weight space). For the dynamical properties of the action another notion is of fundamental importance. We say that two Lyapunov weights $\alpha,\beta\in \Phi$
are \emph{equivalent} if there is some $c>0$ with $\alpha=c\beta$.  The equivalence classes are called \emph{coarse Lyapunov weights} and for a given coarse Lyapunov weight $[\alpha]$ the sum of the weight spaces $\mathfrak g^{[\alpha]}=\sum_{\beta\in[\alpha]}\mathfrak g^\beta$ is the \emph{coarse Lyapunov weight space}. We denote the set of all coarse Lyapunov weights by $[\Phi]$.

\subsection{The Lie algebra and the exponential map}

In this section we recall standard facts and notations, which as phrased below also work over
non-archimedean places (see e.g.~\cite[\S 3]{EinsiedlerKatokNonsplit}). Let
\begin{equation*}
[\cdot, \cdot]: \mathfrak g ^ 2 \rightarrow \mathfrak g
\end{equation*}
be the $\Q _ \sigma$-bilinear Lie bracket satisfying
\begin{equation} \label{Adjoint is homomorphism}
\operatorname{Ad} _ g ([u, v]) = [\operatorname{Ad} _ g u, \operatorname{Ad} _ g v]
\end{equation}
for all $ g \in G $ and $ u, v \in \mathfrak g $.
Since $\Q _ \sigma$ is a local field of characteristic zero, the exponential map $ \exp (\cdot )$ is a local homeomorphism between the
Lie algebra $ \mathfrak g $ and $ G $
such that
\begin{equation} \label{Conjugation and adjoint}
g \exp (u ) g ^ {- 1} = \exp (\operatorname{Ad} _ g (u))
\end{equation}
whenever both sides are defined.  Write $ \log (\cdot) $ for the locally defined inverse map.
Furthermore, we define $\operatorname{ad}_u(v)=[u,v]$ for any $u, v \in \mathfrak g$. Then
\begin{equation} \label{Adjoint and adjoint}
\operatorname{Ad} _{\exp u} = \exp
(\operatorname{ad}_u)
\end{equation}
whenever both sides are defined.

We have
\begin{equation} \label{weights add}
[\mathfrak g ^ \alpha, \mathfrak g ^ \beta] \subset \mathfrak g ^ {\alpha + \beta}
\end{equation}
for any Lyapunov weights $\alpha, \beta$ (which follows easily from \equ{Adjoint is 
homomorphism}).
If $ \alpha \in \Phi$ is a Lyapunov weight, then the above implies that the coarse 
Lyapunov weight space $\mathfrak g^{[\alpha]}$ is a Lie algebra. Moreover, the 
exponential map  restricted to this subalgebra is a polynomial map and 
so can be extended to the whole of $ \mathfrak g ^ {[\alpha]}$ with the image $ G ^ {[\alpha]} = \exp 
\mathfrak g ^ {[\alpha]}$ a unipotent group. This extension is the unique way to extend $\exp$ to $\mathfrak g^{[\alpha]}$ so that that \equ{Conjugation and adjoint} holds. More generally, let $ \Psi \subset \Phi$ be a set of Lyapunov weights  such that
$ (\Psi + \Psi) \cap \Phi \subset \Psi$ and $ \alpha (a) > 0$
for all $ \alpha \in \Psi$ and some fixed $ a \in A$.  Then
\begin{equation*}
\mathfrak g ^ \Psi = \sum_{\alpha \in \Psi} \mathfrak g ^ {\alpha}
\end{equation*}
is a Lie algebra, $ \exp ( \cdot )$ can be uniquely extended to $ \mathfrak g ^ \Psi$ such that \equ{Conjugation and adjoint} holds, and $ G ^ \Psi = \exp \mathfrak g ^ \Psi$
is a unipotent subgroup that is normalized by~$A$ and generated by the subgroups $G ^ {[\alpha]}\cap G^\Psi$ for $\alpha \in \Psi$. It will be convenient to say that a subgroup $U ' \subset G ^ \Psi$ is Zariski connected if there exists a Lie subalgebra $\mathfrak g ' \subset \mathfrak g ^ \Psi$ such that $U' = \exp (\mathfrak g ')$; in the real case this notion agrees with the subgroup being connected with respect to the Hausdorff topology.

If $\Psi=\{\alpha\in\Phi:\alpha(a)<0\}$ for some $a\in A$, then the Lie algebra $\mathfrak g_a^-=\mathfrak g^\Psi$ is called the \emph{stable horospherical Lie algebra} and the associated unipotent subgroup $G_a^-$ is the \emph{stable horospherical subgroup}. Similarly $\mathfrak g_a^+=\mathfrak g_{a^{-1}}^-$ and $G_a^+=G_{a^{-1}}^-$ are the \emph{unstable horospherical Lie algebra and subgroup} defined by $a$.

\section{Necessity of the assumptions on the acting group}\label{construction}

\subsection{A counterexample}\label{counter-section}
To see how the conclusion of Theorem~\ref{higher rank} may fail if the homomorphism $a : \Z^d\to\prod_iG_i$ is not of class-$\cA'$, we give a simple construction of a counterexample. As we will see the measure in this counterexample still has a lot of algebraic structure but fails the precise description in Theorem~\ref{higher rank}.

Let~$X_1=\Gamma_1\backslash G_1$ be an arithmetic quotient of a semisimple group that is saturated by unipotents as in Definition~\ref{def:saturated}.
Let~$a_1 : \Z^d \to G_1$ be a class-$\cA'$ homomorphism, $A_1 = a_1(\Z^d)$. Suppose in addition that the centralizer of~$A_1$ contains an infinite
compact subgroup, and choose some~$k_1,\ldots,k_d \in G_1$ which are~$d$ commuting elements
in the centralizer of~$A_1$ which generate a dense subgroup of an infinite compact group~$M$.  
For simplicity we also assume that~$\Gamma_1$ is torsion free so 
that~$M\cap g^{-1}\Gamma_1g=\{e\}$ for all~$g\in G_1$. 
We define~$A_2<G_2=G_1$ by setting~$a_2(\mathbf{n})=a_1(\mathbf{n})k_1^{n_1}\cdots k_d^{n_d}$
for all~$\mathbf{n}\in\ZZ^d$. 
We also set~$\Gamma_2=\Gamma_1$ so that~$X_2=X_1$ and~$X=X_1\times X_1$. 
As before we let~$a=(a_1,a_2)$ and take $A=a(\Z^d)$. We note that $a$ is not of class-$\cA'$, even though one can easily construct~$p$-adic
examples where both~$a_1$ and~$a_2$ are separately of class-$\cA'$.

We now wish to define an~$A$-invariant and ergodic joining on~$X$ that is not algebraic in the sense of Definition~\ref{def:algebraic measure}. Let~$\mu_1$
be the Haar measure on the diagonal~$\{(x_1,x_1):x_1\in X_1\}$. Clearly~$\mu_1$
projects under the natural projections $\pi_i:X\to X_i$ ($i=1,2$) to the Haar measures on~$X_1$ and~$X_2$, but it is not invariant under the diagonally embedded subgroup~$A=\{(a_1(n),a_2(n)):n\in\ZZ^d\}$. However, let $\mu$ denote the probability measure
\[
 \mu=\int_M (e,m). \mu_1\operatorname{d}\!m_M(m),
\]
where the integral is taken with respect to a probability Haar measure~$m_M$ on~$M$, and $g.\mu_1$ denotes the pushforward of $\mu_1$ under the natural action of $G=G_1\times G_2$ from the right on $\Gamma\backslash G$. Then $\mu$ 
is an~$A$-invariant joining for the action of~$A_1$ on~$X_1$ and
the action of~$A_2$ on~$X_2$.  This measure is not algebraic (as defined
in Theorem~\ref{higher rank}) as~$H=\bigcup_{g\in G_1}(\{g\}\times Mg)$ 
is not a subgroup of~$G$ (as the latter would imply
that~$M\cong H\cap(\{e\}\times G_1)$ is a normal subgroup of~$G_1$). Since~$M\cap g^{-1}\Gamma_1g=\{e\}$
for all~$g\in G_1$
we see that~$(X,\mu)$ is isomorphic to~$(X_1\times M,m_{X_1}\times m_M)$, and that the action of~$A$ is taken under this isomorphism to the action of~$\{(a_1(\mathbf{n}),k_1^{n_1}\cdots k_d^{n_d}):\mathbf{n}\in\ZZ^d\}$. By a variant of Moore's ergodicity theorem (cf.~\cite[\S II.3]{Margulis-book}), the action of~$A_1$ on~$X_1$ is weakly mixing (since we also use this fact later, we state and prove it below in Proposition~\ref{fancy-Mautner}). As the elements~$k_1,\ldots,k_d$ generate 
a dense subgroup of~$M$, it follows that the above action on $X_1\times M$ is ergodic, and hence~$\mu$ is indeed an ergodic non-algebraic joining.

\begin{proposition}\label{fancy-Mautner}
	Let~$\GG=\LL\ltimes\UU_{\textrm{rad}}$, $\LL=\GG_1\times\cdots\times\GG_r$, and~$\UU_{\textrm{rad}}$ 
	be algebraic groups defined over~$\Q$ as in Theorem~\ref{thm:unipotent}; in particular~$\GG$ is
	a perfect group. Let~$S$ be a finite set
	of places of~$\Q$, $G\leq \GG(\Q_S)$ be a 
	finite index subgroup, and $\Gamma<G \cap \GG(\Q_S)$ an arithmetic
	lattice so that~$X=\Gamma\backslash G$ is saturated by unipotents. Let~$a\in G$ be an element of class-$\cA'$ such that its projection to each~$\Q$-almost simple factor $\GG_i$ of~$\LL$ is nontrivial. Then the action of~$a$
is weak mixing w.r.t.\ $m_X$.
\end{proposition}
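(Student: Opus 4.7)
Weak mixing of $a$ with respect to $m_X$ is equivalent to non-existence of nonzero eigenfunctions of $\pi(a)$ on $L^2_0(X):=L^2(X,m_X)\ominus\mathbb{C}$, where $\pi$ is the Koopman representation of $G$. The plan is to suppose a nonzero eigenfunction $f$ with $\pi(a)f=\lambda f$, $|\lambda|=1$, exists, and to show via a Mautner-type argument, together with the class-$\cA'$ and perfectness hypotheses, that $f$ must be invariant under a subgroup of $G$ large enough for the saturation-by-unipotents assumption to force $f$ to be essentially constant---contradicting $f\in L^2_0\setminus\{0\}$.

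The standard Mautner argument for eigenvectors gives that $f$ is invariant under $G_a^\pm$: for $u\in G_a^-$ one has $\|\pi(u)f-f\|=\|\pi(a^nua^{-n})f-f\|\to 0$, using $\pi(a)^{\pm n}f=\lambda^{\pm n}f$, unitarity of $\pi(a^{-n})$, strong continuity of $\pi$, and $a^nua^{-n}\to e$; the analogous calculation handles $G_a^+$. To extend this invariance, I work factor by factor in $\LL=\prod_i\GG_i$. In each $\Q$-almost simple $\GG_i$, the nontrivial class-$\cA'$ element $a_i$ has nonzero Lyapunov weights, so $\mathfrak g_{a_i}^\pm\neq 0$; in the simple Lie algebra $\mathfrak g_i$ the subspace $\mathfrak g_{a_i}^-+\mathfrak g_{a_i}^++[\mathfrak g_{a_i}^-,\mathfrak g_{a_i}^+]$ is a nonzero ideal, hence all of $\mathfrak g_i$, and standard structure theory of isotropic semisimple groups over local fields (Bruhat decomposition / Kneser-Tits) then yields that $\langle G_{a_i}^-,G_{a_i}^+\rangle$ contains an open finite-index subgroup of $G_i$. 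Thus $f$ is invariant under an open finite-index subgroup $L'\leq L$.

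To reach the unipotent radical, I invoke perfectness ($\mathfrak u_{\rm rad}=[\mathfrak l,\mathfrak u_{\rm rad}]$) and complete reducibility of finite-dimensional $\LL$-representations in characteristic zero to decompose $\mathfrak u_{\rm rad}$ into nontrivial irreducible $\LL$-submodules. The class-$\cA'$ condition on $a$---its eigenvalues in a faithful rep are nontrivial powers of $\lambda_p$ or $e^c$ with $c\neq 0$, hence not roots of unity---prevents $\Ad(a)$ from acting trivially on any nontrivial irreducible $\LL$-module, so each submodule contains a nonzero $\Ad(a)$-weight vector and, by irreducibility, is $\LL$-generated by its intersection with $\mathfrak u_{\rm rad}^+\oplus\mathfrak u_{\rm rad}^-$; consequently $\mathfrak u_{\rm rad}=\sum_{\ell\in L'}\Ad(\ell)(\mathfrak u_{\rm rad}^++\mathfrak u_{\rm rad}^-)$. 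The identity $\pi(\ell u\ell^{-1})f=\pi(\ell)\pi(u)\pi(\ell^{-1})f=f$ for $\ell\in L'$ and $u\in \UU_{\textrm{rad}}^\pm\subseteq G_a^\pm$ propagates invariance to all these conjugates, which by nilpotency of $\UU_{\textrm{rad}}(\Q_S)$ generate an open finite-index subgroup of $\UU_{\textrm{rad}}(\Q_S)$. Combining the steps, $f$ is invariant under an open finite-index subgroup $H\leq G$. Since any connected unipotent subgroup of $\GG(\Q_S)\cap G$ is divisible and therefore lies in any finite-index subgroup of $G$, $H$ contains the group generated by all unipotents; by the saturation-by-unipotents hypothesis this group acts ergodically on $X$, forcing $f$ to be $m_X$-essentially constant, the desired contradiction. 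The main obstacle is upgrading Zariski density to openness in the $\Q_S$-analytic topology---both inside $\LL$ and inside $\UU_{\textrm{rad}}$---which relies on local structure theory of isotropic algebraic groups and of nilpotent Lie groups, and must be handled uniformly across the places in $S$.
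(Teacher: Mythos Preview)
Your overall plan---Mautner for eigenfunctions, then enlarge the invariance group until saturation by unipotents kicks in---matches the paper's, but there is a genuine gap in how you enlarge the invariance group, and it occurs in both Step~2 and Step~3 for the same underlying reason.

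The hypothesis only says that the projection of $a$ to each $\Q$-\emph{almost simple} factor $\GG_i$ is nontrivial. Over $\Q_S$ one has $\GG_i(\Q_S)=\prod_{\sigma\in S}\GG_i(\Q_\sigma)$, and each $\GG_i(\Q_\sigma)$ may further decompose into $\Q_\sigma$-simple factors. Nothing prevents the projection of $a$ to some $\GG_i(\Q_\sigma)$, or to some $\Q_\sigma$-simple factor thereof, from being trivial. In that situation $G_{a_i}^{\pm}$ misses that local factor entirely, so $\langle G_{a_i}^-,G_{a_i}^+\rangle$ is \emph{not} open (let alone finite-index) in $G_i$; your assertion ``in the simple Lie algebra $\mathfrak g_i$'' is false over $\Q_S$. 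The same phenomenon breaks Step~3: an irreducible $\LL(\Q_S)$-submodule $V\subset\mathfrak u_{\rm rad}(\Q_S)$ can live entirely at a place $\sigma$ where $a$ projects trivially, so $\Ad(a)|_V=\mathrm{id}$ and $V\cap(\mathfrak u_{\rm rad}^+\oplus\mathfrak u_{\rm rad}^-)=0$. (Your claim that class-$\cA'$ forces ``nontrivial powers'' is also a misreading: the zeroth power, i.e.\ eigenvalue $1$, is explicitly allowed.)

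What rescues the argument in the paper is the arithmetic lattice. Viewing $f$ as a left-$\Gamma$-invariant function on $G$, the paper works with the group $I_f=\{g\in G: f\text{ is left-$g$-invariant}\}$, which automatically contains $\Gamma$. From the Mautner step one only obtains that $I_f$ contains $F^+$ for \emph{some} $\Q_\sigma$-simple local factor $F$ of each $\GG_i$; the passage to finite index in $L$ then goes through \emph{strong approximation} on the simply connected cover, using $\Gamma$. For the unipotent radical, the paper again exploits $\Gamma$: any proper closed $I_f$-obstruction in $\UU_0(\Q_S)=\urad/[\urad,\urad](\Q_S)$ descends, because it contains the arithmetic lattice $\Gamma_0$, to a proper $\Q$-subgroup $\VV<\UU_0$; then $[F^+,\UU_0(\Q_\sigma)]\subset\VV(\Q_\sigma)$ combined with $\Q$-almost simplicity of $\GG_i$ forces $[\GG_i,\UU_0]\subset\VV$, contradicting perfectness. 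Both steps are essentially arithmetic and cannot be replaced by the purely local structure theory you invoke.
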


\begin{proof}
	Fix some~$f\in L^2(X,m_X)$, which is an eigenfunction w.r.t.~$a$.
	Let~$u\in G^-_a$ be such that~$a^n ua^{-n}$ converges to the identity as~$n\to\infty$. Then~$f$ is
	invariant under~$u$. We recall the simple argument (c.f.~\cite[Lemma II.3.2]{Margulis-book})
	\[
	  \| f(x u)-f(x)\|_2=\|f(x u a^{-n})-f(x a^{-n})\|_2=
	\|f(x a^nua^{-n})-f(x)\|_2,
	\]
	where we used that~$f$ is an eigenfunction w.r.t.~$a$ (and that the eigenvalue
	must have absolute value one) in the first step and unitarity
	of the action of~$a$ in the second. However, the last expression now converges
	to~$0$ as~$n\to\infty$. This implies that~$f$ is invariant from the right under~$G^-_a$ and similarly under~$G_a^+$.

	We let~$H$ be the subgroup of~$G$ generated by~$G_a^-$ and~$G_a^+$ and recall that the Lie algebra of~$H$ is a Lie ideal
	of~$G$ (the Auslander ideal).
	It follows that a finite index subgroup of $H$ is normal in $G$, but since both $G_a^+$ and $G_a^-$ have no finite index subgroups it follows that in fact $H$ itself is normal.
	
	\label{plus paragraph}
	Recall that for every isotropic~$\Q_\sigma$-simple algebraic group~$\mathbf F$ 
	there exists a minimal finite index subgroup of~$F=\mathbf F(\Q_\sigma)$, namely the image~$F^+=[F,F]$
	of the group of~$\Q_\sigma$-points of the simply connected cover~$\tilde{\mathbf F}$ of~$\mathbf F$,
	see~\cite[I.2.3]{Margulis-book} and the references therein. Moreover,~$F/F^+$
	is abelian. 
	It follows from the assumptions on $a$ that for every $\Q$-almost simple factor $\GG_i$ of $\LL$, there is a $\sigma \in S$ so that $G_a^- \cap \GG_i(\Q_\sigma)$ is infinite, hence by normality of $H$ and minimality of $F^+$ we must have that $H$ contains $F^{+}$ 
	for some $\Q_\sigma$-simple factor $\mathbf F$ of $\GG_i$ (considered as an algebraic group over $\Q_\sigma$).
	
	We may identify $f$ with a left $\Gamma$-invariant function on $G$, and using this identification, we
	define~$I_f=\{g\in G: f$ is left~$g$-invariant$\}$. As $H$ is normal, $f$ is~$H$-left and right invariant. 
	Let  $\UU_0 = \urad/[\urad,\urad]$ and \[\Gamma_0 = (\Gamma \cap \urad(\Q_S)) \bmod [\urad(\Q_S),\urad(\Q_S)].\] Note that $\LL\ltimes\UU_0$ inherits from $\LL\ltimes\urad$ a natural structure of an algebraic group defined over $\Q$. 
	
	We claim the image of $I_f \cap \urad(\Q_S)$ under the natural quotient map \[\iota:\urad(\Q_S)\to\UU_0(\Q_S)\]
	is all of $\UU_0(\Q_S)$, from which it follows by a standard induction argument on the nilpotency degree that $I_f > \urad(\Q_S)$.
	Suppose then that \[\iota(I_f \cap \urad(\Q_S)) \subsetneqq \UU_0(\Q_S).\] Then as $I_f$ is closed and contains $\Gamma$, the image $\iota(I_f \cap \urad(\Q_S))$ is a (proper) closed subgroup of the solenoid $\UU_0(\Q_S)/\Gamma_0$. 
	
	Since $H$ was normal in $\LL \ltimes\urad (\Q_S)$, its image $H_0$ in $\LL \ltimes\UU_0 (\Q_S)$ is also normal.
	It follows that 
\begin{equation}\label{H UU_0 equation}
	[H_0,\UU_0(\Q_S)] \subset \UU_0(\Q_S) \cap H_0.
\end{equation}
	As
\[
\Gamma _ 0 (\UU _ 0 (\Q _ S) \cap H _ 0) \subseteq \iota (I _ f \cap \urad (\Q _ S))
\]
there is a proper algebraic subgroup $\VV < \UU _ 0$ defined over $\Q$ so that
\begin{equation}\label{H UU_0 equation cont.}
\UU _ 0 (\Q _ S) \cap H _ 0 \subset \VV (\Q _ S)
.\end{equation}

	As we have noted above, for any $i$ the group $\GG_i$ has a $\sigma \in S$ and a $\Q_\sigma$-simple group $\mathbf F$ so that $F^+ \unlhd \GG_i(\Q_\sigma)$ and $F^{+} \leq  H$ (or, if we consider $F^+$ as a subgroup of $\LL\ltimes\UU_0(\Q_S)$, \ $F^+\leq H_0$). By \eqref{H UU_0 equation} and \eqref{H UU_0 equation cont.} we have that $[F^{+},\UU_0(\Q_\sigma)]\leq  \VV(\Q_\sigma)$ which implies $[\GG_i,\UU_0] \leq  \VV$. But this implies $[\LL, \UU_0] \leq  \VV$ which is a proper subgroup of $\UU_0$, contradicting $[\LL,\UU]=\UU$.  This finishes the proof of the claim and hence~$f$ is invariant under~$\urad(\Q_S)$.

	For notational convenience, we assume now (as we may by the above paragraph) that~$\UU_{\textrm{rad}}$ is trivial.
	Using our assumption on~$a$ and 
	strong approximation on the simply connected cover 
	of~$\LL$ (see e.g.~\cite[\S II.6.8]{Margulis-book}) it follows that~$I_f>\Gamma H$ has finite index in~$\LL(\Q_S)$. 
	Hence there exists a finite index normal subgroup~$I'\unlhd G$ contained in~$I_f$, so that~$f$
	is also right invariant under~$I'$. However, as a unipotent subgroup does not
	have any finite index subgroup, it follows that~$I'$ contains all unipotent elements
	of~$G$. By the assumption that $\Gamma\backslash G$ is saturated by unipotents it follows that~$I'$ acts ergodically, which implies that~$f$ is constant.	
\end{proof}

\begin{remark}\label{saturated by unipotent remark}
A similar (simpler) arguments can be used to establish the claim following Definition~\ref{def:saturated}. Indeed,
suppose $\GG$ is an algebraic group defined over $\Q$ whose radical equal to its unipotent radical $\urad$, so that $\GG / \urad$ has no $\Q$-almost simple factor $\GG_1$ with $\GG_1 (\Q_ S)$ compact.
Let $H$ be a subgroup of $\GG (\Q _ S)$ generated by unipotent subgroups. By definition, $H \geq  \urad (\Q _ S)$. Applying strong approximation on each $\Q$-almost simple factor of $\GG / \urad$ as above, we may conclude that
\begin{equation*}
\overline { \Gamma H} = G
\end{equation*}
for some finite index subgroup $G < \GG (\Q _ S)$, which implies the claim.
\end{remark}

\subsection{General extension of Theorem~\ref{higher rank}}

We now explicate how to generalize the classification of joinings from the case
considered in Theorem~\ref{higher rank} to the more general class of
higher rank actions without any eigenvalue assumption.

Suppose
that~$X=X_1\times\cdots\times X_r$ for~$r\geq 2$ and that~$X_i=\Gamma_i\backslash G_i$ 
for~$i=1,\ldots,r$ are as in Theorem~\ref{higher rank}.
Let $a_i : \Z^d \to G_i$  for~$i=1,\ldots,r$ be proper homomorphism (i.e. group homomorphisms that are also proper as maps from one topological space to another) with diagonalizable image (i.e. for every~$\sigma\in S$ the projection of~$a_i$ to the~$\sigma$-adic 
factor of~$G_i$ is diagonalizable over the algebraic closures 
of~$\Q_\sigma$). Let~$A_i = \{a_i(\mathbf n):\mathbf n\in\ZZ^d\}$ and $A=\{(a_1(\mathbf n),\ldots,a_r(\mathbf n)):\mathbf n\in\ZZ^d\}$ 
be the diagonally embedded subgroup. 

\begin{corollary}\label{cor-extension}
 Let~$X=X_1\times\cdots\times X_r$ 
 and~$A<A_1\times\cdots\times A_r<G=G_1\times\cdots\times G_r$ be
 as above for some~$r\geq 2$ (in particular, each $G_i$ is a finite index subgroup of the $\Q_S$-points of a $\Q$-almost simple group). Let~$\mu$ be an~$A$-invariant and ergodic joining on~$X$.
Then there exists a closed abelian subgroup~$A'<G$ containing~$A$ such that~$A'/A$
is compact, an algebraic
semi-simple subgroup~$\mathbb H<\mathbb G$ defined 
over~$\Q$, and a finite index subgroup~$H<\mathbb H(\Q_S)$ such that~$\mu$ is the measure
\[
 \mu=\int_{A'/A_H} a'. m_{\Gamma  H g} \operatorname{d}\!m_{A'/A_H}(a'A_H),
\]
for some~$g\in G$, where~$A_H=A'\cap g^{-1}H g$ is a cocompact subgroup of~$A'$ and~$m_{A'/A_H}$
denotes the normalized Haar measure on the compact group~$A'/A_H$.
\end{corollary}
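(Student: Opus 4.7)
The plan is to reduce to Theorem~\ref{higher rank} by decomposing $\mu$ under a class-$\cA'$ subgroup obtained from $A$ via a cocompact modification, then reassembling.

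First, I would construct a closed abelian subgroup $A'\leq G$ containing $A$ cocompactly, together with a class-$\cA'$ subgroup $\tilde A\leq A'$ with $A'/\tilde A$ compact. Since each $a_i$ is proper and its image has simultaneously diagonalizable projections over each $\overline{\Q_\sigma}$, the closure $\overline{A_i}$ in $G_i$ is a finitely generated locally compact abelian group whose maximal compact subgroup $K_i$ lies in $C_{G_i}(A_i)$, and the failure of $A_i$ to be class-$\cA'$ at a given place (i.e.\ eigenvalues not positive real at $\infty$, or not of the form $\lambda_p^n$ at a finite $p$) is accounted for by a character with values in a compact group. Setting $A'=A\cdot K$ with $K=\prod_i K_i$ and choosing a class-$\cA'$ complement on a finite-index subgroup yields $\tilde A$. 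Properness of each $a_i$ ensures that the projection $\tilde A_i$ of $\tilde A$ to each $G_i$ still has rank $d$ and hence is a nontrivial class-$\cA'$ subgroup.

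Next, consider the ergodic decomposition $\mu=\int \mu_t\,d\nu(t)$ under $\tilde A$. By Proposition~\ref{fancy-Mautner} applied to any nontrivial element of $\tilde A_i$, the Haar measure $m_{X_i}$ is $\tilde A_i$-weakly mixing, in particular ergodic. Since $\pi_{i,*}\mu=m_{X_i}$ and each $\pi_{i,*}\mu_t$ is $\tilde A_i$-invariant, the uniqueness of the ergodic decomposition forces $\pi_{i,*}\mu_t=m_{X_i}$ for $\nu$-a.e.\ $t$. Thus each $\mu_t$ is a $\tilde A$-invariant and ergodic joining, so Theorem~\ref{higher rank} gives $\mu_t=m_{\Gamma H_t g_t}$ for appropriate $\Q$-data $(\HH_t,H_t,g_t)$.

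Finally, $A$ (and in fact $A'$) preserves $\mu$ and normalizes $\tilde A$, so it permutes the components $\mu_t$ while preserving $\nu$; the $A$-ergodicity of $\mu$ then forces this action to be ergodic, hence transitive on a $\nu$-conull set. Using Proposition~\ref{joining-groups} together with the fact that conjugating $m_{\Gamma H_t g_t}$ by an element of $G$ conjugates the stabilizer $g_t H_t g_t^{-1}$, one concludes that $(\HH_t,H_t)\equiv(\HH,H)$ is $\nu$-essentially constant up to conjugation, with $g_t$ ranging over a single $A'$-orbit $A'\cdot g$. The stabilizer of $m_{\Gamma H g}$ inside $A'$ is exactly $A_H=A'\cap g^{-1}Hg$, which is cocompact in $A'$, giving
\[
\mu=\int_{A'/A_H} a'.m_{\Gamma H g}\,dm_{A'/A_H}(a'A_H).
\]
The main obstacle I anticipate is establishing rigorously that the algebraic data $(\HH,H)$ is $\nu$-essentially constant and that the parametrizing group really is $A'/A_H$: this requires combining a measurability argument for the map $t\mapsto(\HH_t,H_t,g_t\text{ mod }H_t)$ with the rigidity of algebraic measures under translations, so that the transitive $A'$-orbit of ergodic components can be identified with the homogeneous space $A'/A_H$.
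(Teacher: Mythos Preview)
Your overall strategy is essentially that of the paper, but there is a genuine gap at the point where you take the $\tilde A$-ergodic decomposition of $\mu$. The group $\tilde A$ you construct (the class-$\cA'$ ``non-compact part'') is a subgroup of $A'=A\cdot K$, but in general $\tilde A\not\leq A$: the decomposition $a=a_{\textrm{nc}}k_a$ only gives $a_{\textrm{nc}}\in A'$, not $a_{\textrm{nc}}\in A$. Consequently $\mu$ is not known to be $\tilde A$-invariant, and you cannot form its $\tilde A$-ergodic decomposition. Your later assertion that ``$A'$ preserves $\mu$'' is likewise unjustified --- indeed the counterexample in \S\ref{counter-section} shows that $\mu$ need \emph{not} be $K$-invariant.

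The paper handles this by first forming the average $\mu'=\int_M m.\mu\,dm_M$ over the compact group $M=\overline{\{k_a\}}$, which \emph{is} $A'$-invariant (and $A'$-ergodic). One then applies Theorem~\ref{higher rank} to a generic $A_{\textrm{nc}}$-ergodic component $\mu_{\textrm{nc}}$ of $\mu'$, obtaining $\mu_{\textrm{nc}}=m_{\Gamma Hg}$. The crucial extra step you are missing is to recover $\mu$ from $\mu'$: the paper uses Proposition~\ref{fancy-Mautner} not (as you do) merely to check that the ergodic components are joinings, but to show that the action of $A_{\textrm{nc}}$ on $m_{\Gamma Hg}$ is \emph{weak mixing}, so that $\mu_V:=\int_M k.\mu_{\textrm{nc}}\,dm_M$ is already ergodic under a finite-index subgroup $A_V\leq A$. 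This forces $\mu'$ to be $A$-ergodic, hence $\mu'=\mu$. Without this weak-mixing argument the integral representation you write down describes $\mu'$, not $\mu$.
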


\begin{proof}
	The proof uses the decomposition of semi-simple elements into compact and non-compact 
	semi-simple elements together with the argument from 
	\S\ref{counter-section}. 
	
	Recall that in any algebraic group~$\GG$ defined over~$\R$ any semi-simiple  
	element~$a\in\GG(\R)$
	can be split into a commuting product~$a=k_{a}a_{\textrm{nc}}$ 
	of a compact semi-simple
  element~$k_{a}\in\GG(\R)$ and a non-compact element~$a_{\textrm{nc}}\in\GG(\R)$
  such that the eigenvalues of~$k_{a}$ are all of absolute value one
  and the eigenvalues of~$a_{\textrm{nc}}$ are all real and positive. This decomposition
	is unique. For several commuting elements the resulting elements still commute with each 
	other and furthermore, the non-compact elements all belong to the 
	connected component~$T^\circ$ of the~$\R$-points~$T=\mathbb{T}(\R)$ of an~$\R$-split 
	subtorus $\mathbb{T}$ of~$\GG$.
	If~$G_\R<\GG(\R)$ 
	is a finite index subgroup and~$a\in G_\R$, then~$a_{\textrm{nc}}\in T^\circ<G_\R$ 
	implies that~$k_a,a_{\textrm{nc}}\in G_\R$.
	
	With some modifications, a similar splitting can be concocted in an algebraic group~$\GG$ defined over~$\Q_p$.
	We may extend the~$p$-adic absolute value from~$\Q_p$ to its algebraic closure.
	Let~$a\in\GG(\Q_p)$ be a semi-simple element. If the eigenvalues
	of~$a$ all have~$p$-adic absolute values which are integer powers of~$p$, then
	we can define a splitting analogous to the real case: there exists 
	a unique~$a_{\textrm{nc}}\in\GG(\Q_p)$ whose eigenvalues are powers of the
	uniformizer~$\lambda_p=p$ which commutes with~$a$ and for which the eigenvalues
	of~$k_a=a a_{\textrm{nc}}^{-1}$ are of~$p$-adic absolute value one.
	If~$G_{p}<\GG(\Q_p)$ is a subgroup of finite index, and~$a\in G_{p}$ is only assumed to be diagonalizable, 
	we may apply the above to a power of~$a$ and obtain (possibly for a higher power)
	that~$a^\ell=a_{\textrm{nc},\ell}k_{a^\ell}$ as above with~$a_{\textrm{nc},\ell},k_{a^\ell}\in G_{p}$
	for some~$\ell\geq 1$. 

	Combining the archimedean and non-archimedean decompositions we see that there are homomorphisms
	$a_{\textrm{nc}},k : \Z^d \to \GG(\Q_S)$ so that $a_{\textrm{nc}}$ is of class-$\cA'$, the image of $k$ is relatively compact, $a_{\textrm{nc}}(\mathbf n)$ and $k(\mathbf m)$ commute for every $\mathbf n, \mathbf m$, and so that 
	\[
	A_V=\{a_{\textrm{nc}}(\mathbf n)k(\mathbf n): \mathbf n \in \Z^d\}
	\]
	is a finite index subgroup of $A$.
	Let~$A_{\textrm{nc}}=a_{\textrm{nc}}(\Z^d)$, \ $M=\overline{k(\Z^d)} < C_G(A_{\textrm{nc}})$, and~$A'=AM$.

 Let now~$\mu$ be an~$A$-invariant and ergodic joining on~$X$. We define
\[
 \mu'=\int_{M}m.\mu\operatorname{d}\!m_{M}(m),
\]
which is an~$A'$-invariant and ergodic measure. 
As~$A'/A_{\textrm{nc}}$ is a compact and abelian group, it follows
furthermore that a.e.~$A_{\textrm{nc}}$-ergodic components of~$\mu'$
can be obtained from another generic~$A_{\textrm{nc}}$-ergodic component by the action of~$a\in A'$. 
So if we let~$\mu_{\textrm{nc}}$ be a generic~$A_{\textrm{nc}}$-ergodic
component of~$\mu'$, then we have
\begin{equation}\label{muprimeformula}
 \mu'=\int_{A'/A_{\textrm{nc}}}a'.\mu_{\textrm{nc}}\operatorname{d}\!m_{A'/A_{\textrm{nc}}}(a'A_{\textrm{nc}}).
\end{equation}
Theorem~\ref{higher rank} now applies to~$\mu_{\textrm{nc}}$ and~$A_{\textrm{nc}}$. 
It implies that~$\mu_{\textrm{nc}}$
is the~$g^{-1}Hg$-invariant Haar measure~$m_{\Gamma H g}$ on the orbit~$\Gamma H g$ for some finite
index subgroup~$H<\HH(\Q_S)$ of the~$\Q_S$-points of an algebraic group~$\HH<\GG$
defined over~$\Q$. As remarked in the introduction (cf. also Proposition~\ref{joining-groups} below), the joining assumption implies that the group $\HH$ is semisimple.
In particular $A_{\textrm{nc}}<g^{-1}Hg$. 
By Remark~\ref{saturated by unipotent remark}, there is a finite index subgroup $H_1 \leq H$ so that $(\Gamma \cap H_1) \backslash H_1$ is saturated by unipotents. Since $A_{\textrm{nc}}$ was defined only up to a finite index, replacing it with the finite index subgroup $A_{\textrm{nc}}\cap g^{-1}H_1g$ if necessary, we can without loss of generality assume $(\Gamma \cap H) \backslash H$ is saturated by unipotents, and hence by Proposition~\ref{fancy-Mautner}, the action of $A_{\textrm{nc}}$ on $\mu_{\textrm{nc}}=m_{\Gamma H g}$ is weak mixing.

It follows that the action of $\Z^d$ on~$(X,\mu_{\textrm{nc}})\times (M,m_M)$ given by $(x,k) \mapsto (a_{\textrm{nc}}(\mathbf n).x, k(\mathbf n) k)$ for any $\mathbf n \in \Z^d$ is ergodic. The map~$(x,k)\mapsto k.x$ from $X \times M$ to $X$ intertwines the above $\Z^d$ action on $X \times M$ to the $\Z^d$ action corresponding to $A_V$ on $X$, i.e. the $\Z^d$ action given by $x \mapsto a_{\textrm{nc}}(\mathbf n)k(\mathbf n).x$. The image of the product measure $\mu_{\textrm{nc}}\times m_M$ under this map is the measure
\[
 \mu_V=\int_{M}k.\mu_{\textrm{nc}}\operatorname{d}\!m_{M}(k)
\]
and it follows that this measure is ergodic (and invariant) under~$A_V$.

Since~$A'=AM$ and $A\cap M=\emptyset$ we have~$A'/MA_{\textrm{nc}}\cong A/A_V$. Together 
with~\eqref{muprimeformula} we now see that
\[
\mu'=\int_{A'/MA_{\textrm{nc}}}a.\mu_V\operatorname{d}\!m_{A/MA_{\textrm{nc}}}(aMA_{\textrm{nc}})=\int_{A/A_V}a.\mu_V\!\operatorname{d}m_{A/A_V}
\]
is ergodic under the action of~$A$. However, this forces~$\mu'=\mu$ and the corollary follows.
\end{proof}

\subsection{Algebraicity without the eigenvalue assumption}
Corollary~\ref{cor-extension} classifies joinings for actions of many higher rank abelian groups that are not of class-$\cA'$, though the description of the joinings in this case is not as nice as that given in Theorem~\ref{higher rank} for class-$\cA'$ groups. However, there are some cases of groups for which the class-$\cA'$ assumption fails, yet the nice conclusion of Theorem~\ref{higher rank} still hold. 

A typical situation is the following:
Let~$X_1=\Gamma\backslash G$ with $\Gamma=\operatorname{SL}_2(\ZZ[\frac1{pq}])$ diagonally embedded in
$G=\operatorname{SL}_2(\R\times\Q_p\times\Q_q)$
for two primes~$p\neq q$. Let~$a_p\in \operatorname{SL}_2(\ZZ[\frac1{pq}])$ be the diagonal element
with eigenvalues~$p,p^{-1}$ and define~$a_q$ similarly. We let~$A_1=\langle a_p,a_q\rangle$. 
Consider e.g. selfjoinings of the corresponding $\Z^2$-action, i.e.\ we take~$r=2$,~$X=X_1\times X_2$ with~$X_2=X_1$,~$A_2=A_1$, and consider a self-joining~$\mu$
on~$X$. Then once more, Theorem~\ref{higher rank} initially does not apply as the resulting diagonally
embedded subgroup~$A$ is not of class-$\cA'$
(since the $p$-adic component of~$a_q$ belongs to a compact subgroup and is nontrivial). However, 
this example has additional structure that can be exploited: specifically, in this case the ``bad'' $p$-adic component of~$a_q$ 
belongs to the algebraic torus given by the Zariski closure of the ``good'' $p$-adic component of~$a_p$, and similarly for the $q$-adic component of $a_p$ and $a_q$. 

The following is a strengthening of Corollary~\ref{cor-extension},
which applies e.g.\ to the above example.

\begin{corollary}\label{cor-extension-2}
	 Let~$X=X_1\times\cdots\times X_r$ and~$A<A_1\times\cdots\times A_r<G=G_1\times\cdots\times G_r$
	 be as in Corollary~\ref{cor-extension} for some~$r\geq 2$. Suppose in addition that every
	element~$a\in A$ can be written uniquely as~$a=a_{\textrm{nc}}k_a$ where~$a_{\textrm{nc}}$ 
	belongs to a class-$\cA'$ group $A_{\textrm{nc}}<G$
	and~$k_a$ satisfies that for each $\sigma \in S$, the projection of $k_a$ to $\GG(\Q_\sigma)$ belongs to a compact subgroup of the Zariski closure of the projection of $A_{\textrm{nc}}$ to $\GG(\Q_\sigma)$. 
	Then any~$A$-ergodic joining on~$X$ is an algebraic measure defined over~$\Q$.	
\end{corollary}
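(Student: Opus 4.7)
The plan is to bootstrap from Corollary~\ref{cor-extension}, using the additional Zariski-closure hypothesis on the $k_a$ to collapse its averaged-algebraic description of $\mu$ onto a single algebraic orbit.

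First I would apply Corollary~\ref{cor-extension} directly to the given $A$-ergodic joining $\mu$. The class-$\cA'$ group $A_{\textrm{nc}}$ extracted from the Jordan decomposition in the proof of Corollary~\ref{cor-extension} coincides with the $A_{\textrm{nc}}$ given in the present hypothesis by the uniqueness of the decomposition $a = a_{\textrm{nc}} k_a$. The conclusion is that there exist a semisimple $\Q$-subgroup $\HH \leq \GG$, a finite-index subgroup $H < \HH(\Q_S)$, an element $g \in G$, and a compact group $M \leq C_G(A_{\textrm{nc}})$ with $A' := A_{\textrm{nc}} M \supset A$ and $A'/A$ compact, such that $A_{\textrm{nc}} \subset g^{-1} H g$ and
\[
\mu = \int_{A'/A_H} a'.m_{\Gamma H g}\, dm_{A'/A_H}(a' A_H),
\]
with $A_H = A' \cap g^{-1} H g$ cocompact in $A'$. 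The measure $\mu$ will be algebraic (after possibly enlarging $H$) as soon as we establish $M \subset g^{-1}\HH(\Q_S) g$, which is the heart of the argument.

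For this, fix a place $\sigma \in S$ and let $\mathbf{Z}_\sigma$ denote the Zariski closure in $\GG_{\Q_\sigma}$ of the $\sigma$-component of $A_{\textrm{nc}}$. Because this $\sigma$-component is contained in the Zariski-closed subvariety $g^{-1}\HH_{\Q_\sigma} g$ (the conjugate of $\HH$ viewed as an algebraic group over $\Q_\sigma$), we have $\mathbf{Z}_\sigma \subset g^{-1}\HH_{\Q_\sigma} g$. By the hypothesis of Corollary~\ref{cor-extension-2}, the $\sigma$-component of each $k_a$ lies in a compact subgroup of $\mathbf{Z}_\sigma(\Q_\sigma)$, and hence in $g^{-1}\HH(\Q_\sigma) g$. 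Assembling this over all places gives $k_a \in g^{-1}\HH(\Q_S) g$, and passing to the closure yields $M \subset g^{-1}\HH(\Q_S) g$, as needed.

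Finally, let $H_1 \leq \HH(\Q_S)$ be the subgroup generated by $H$ together with $g M g^{-1}$. Since $g M g^{-1}$ is compact and $H$ already has finite index in $\HH(\Q_S)$, the group $H_1$ also has finite index. By construction $A' \subset g^{-1} H_1 g$, so replacing $H$ by $H_1$ in the displayed formula above forces $A_{H_1} := A' \cap g^{-1} H_1 g = A'$ and the integral collapses to $\mu = m_{\Gamma H_1 g}$. This realizes $\mu$ as an algebraic measure defined over $\Q$ in the sense of Definition~\ref{def:algebraic measure}. The only genuine work is the Zariski-closure step, and even this is essentially immediate from the hypothesis, which was tailored precisely so that $M$ is forced into the algebraic hull $g^{-1}\HH g$ of the joining; no substantial new obstacle arises beyond what was handled in Corollary~\ref{cor-extension}.
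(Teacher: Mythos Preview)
Your overall strategy matches the paper's: run Corollary~\ref{cor-extension}, then use the Zariski-closure hypothesis to place $M$ inside $g^{-1}\HH(\Q_S)g$. That step is correct and is indeed the heart of the matter. The gap is in your final paragraph. Enlarging $H$ to $H_1 = \langle H, gMg^{-1}\rangle$ and observing $A' \subset g^{-1}H_1 g$ does not by itself collapse the integral to $m_{\Gamma H_1 g}$. The formula from Corollary~\ref{cor-extension} expresses $\mu$ as an average of the translates $a'.m_{\Gamma H g}$, each supported on $\Gamma H b g$ with $b = g(a')^{-1}g^{-1} \in gA'g^{-1}$; the resulting support is $\Gamma\,(H \cdot gA'g^{-1})\,g$. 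If $gA'g^{-1}$ does not normalize $H$, the product set $H \cdot gA'g^{-1}$ need not be a subgroup, and then this support can be strictly smaller than $\Gamma H_1 g$, so $\mu$ is not $m_{\Gamma H_1 g}$ (nor any single Haar orbit measure). There is no general mechanism that lets you ``replace $H$ by $H_1$'' in the displayed formula.

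The paper closes this gap by proving that $gA'g^{-1}$ normalizes $H$. Any finite-index $H < \HH(\Q_S)$ contains $F^+$ for each non-compact almost simple factor $F$, and since $F/F^+$ is abelian, every such $F$ normalizes $H$. Because $A_{\textrm{nc}}$ is of class-$\cA'$, its Zariski closure at each place is a split torus and hence projects trivially to every compact factor of $\HH(\Q_S)$; so $gA_{\textrm{nc}}g^{-1}$, and with it $gMg^{-1}$ (which lies in the Zariski closure of $gA_{\textrm{nc}}g^{-1}$), sits in the product of the non-compact factors. Thus $gA'g^{-1}$ normalizes $H$, the set product $H' = gA'g^{-1}H$ is itself a group, and the integral genuinely reduces to $\mu = m_{\Gamma H' g}$. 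Your $H_1$ coincides with this $H'$ once the normalization is known, but establishing that normalization is precisely the missing ingredient.
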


Given a set $\Omega \subset \GG(\Q_S)$, it will be convenient to refer to the product over all places $\sigma \in S$ of the Zariski closures of the projection of $\Omega$ to $\GG(\Q_\sigma)$ as the Zariski closure of $\Omega$ in $\GG(\Q_S)$.

\begin{proof}
	Let~$\mu$ be an~$A$-ergodic joining. We again apply the proof of Corollary~\ref{cor-extension}. 
	Phrasing our assumptions in the language of this proof
	we have~$A'=A_{\textrm{nc}}M$, where~$M=\overline{\{k_a:a\in A\}}$
	is compact and belongs to the Zariski closure of~$A_{\textrm{nc}}$. Hence we obtain
	from Corollary~\ref{cor-extension}
	an element~$g\in G$, an algebraic group~$\HH<\GG$ defined over~$\Q$, and a finite index subgroup~$H<\HH(\Q_S)$
	containing~$g A_{\textrm{nc}}g^{-1}$,
	which together with~$A'/A$ describe~$\mu$ completely. Clearly~$H$ must contain every
	unipotent subgroup of~$\HH(\Q_S)$. If~$F$ is a non-compact almost direct factor of~$\HH(\Q_S)$
	(defined over~$\Q_\sigma$ for some~$\sigma\in S$), then~$H$ must contain~$F^+$.
	Since~$F/F^+$ is abelian, this implies that~$H$ is normalized by every non-compact factor~$F$
	of~$\HH(\Q_S)$. 

	Since~$A_{\textrm{nc}}<g^{-1}\HH(\Q_S)g$  and~$M$
	belongs to the Zariski closure of~$A_{\textrm{nc}}$ it follows that~$gA'g^{-1}<\HH(\Q_S)$
	belongs to the direct product of the non-compact almost direct factors of~$\HH(\Q_S)$. 
	If necessary we may now replace~$H$ by~$H'=gA'g^{-1}H$, which unlike the general case
	in Corollary~\ref{cor-extension} is now a group. The corollary follows.
\end{proof}

\section{Measure rigidity for unipotent groups}
\label{unipotent measure rigidity}

A basic result in homogeneous dynamics is Ratner's Measure Classification Theorem establishing that any measure on~$X=\Gamma\backslash G$ that is invariant and ergodic \cite{Ratner-Annals} which has been extended to the $S$-arithmetic case by Ratner and by Margulis and Tomanov \cite{Ratner-padic, Margulis-Tomanov}. 

The purpose of this section is to recall this important ingredient in our work in a form adapted to our setting. For instance, in the proof of our main results
we will establish additional invariance of the measure under consideration
by unipotent elements but we will not know in the intermediate steps that the unipotents act ergodically. We will also like to get slightly more precise information on the possible algebraic measures that can appear in the classification that come from the arithmetic structure of the quotient spaces we consider. The following result, obtained by combining the results of~\cite{Margulis-Tomanov-almost-linear} and \cite{Tomanov-orbits} (which are based on the measure classification theorems referred above) will be convenient for our purposes: 

\begin{theorem}\label{MTclassA}
 Let~$X=\Gamma\backslash G$ be an $S$-arithmetic quotient (cf.~\S\ref{intro}) with $G<\GG(\Q_S)$ a finite index subgroup and $\GG$ a connected algebraic group defined over~$\Q$. Let~$A<G$ be a diagonalizable subgroup of class-$\mathcal A'$, and $H_u<G$ 
an~$A$-normalized Zariski-connected
unipotent subgroup generated by Zariski-connected unipotent subgroups. Let~$H=\langle A, H_u\rangle$ be the group generated by~$A$ and~$H_u$. 
Let~$\mu$ be an~$H$-invariant and ergodic probability measure on~$X$. 
Then there exists a connected algebraic~$\Q$-subgroup~$\LL\leq\GG$ and an open finite index 
subgroup~$L\leq\LL(\Q_S)$ such that a.e.~$x\in X$ has a representative~$g\in G$ 
such that the ergodic component~$\mu_x^{\mathcal{E}}$
of~$\mu$ for the point~$x=\Gamma g$ and for the action of~$H_u$ is algebraic and equals the normalized Haar measure~$m_{\Gamma Lg}$
on the closed orbit~$\Gamma L g$. Moreover,~$\mu$ is supported on the closed orbit~$\Gamma N_{G}^1(\LL) g$
for any~$x=\Gamma g\in\supp\mu$. 
\end{theorem}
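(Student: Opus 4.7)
The plan is to apply the $S$-arithmetic Ratner theorem of Margulis--Tomanov \cite{Margulis-Tomanov} together with its $\Q$-rational refinement \cite{Margulis-Tomanov-almost-linear} to the $H_u$-ergodic decomposition of $\mu$, and then upgrade the output to a single $\Q$-group using $H$-ergodicity and the orbit-closure result of \cite{Tomanov-orbits}.

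First I would decompose $\mu=\int \mu_x^\cE\,d\mu(x)$ into $H_u$-ergodic components. Since $H_u$ is generated by Zariski-connected unipotent subgroups, the Margulis--Tomanov classification gives, for $\mu$-a.e.~$x$, an $H_u$-invariant closed subgroup $H_u\leq L_x\leq G$ and a representative $g_x$ with $x=\Gamma g_x$ so that $\mu_x^\cE = m_{\Gamma L_x g_x}$ is the Haar measure on the closed orbit $\Gamma L_x g_x$. The refinement \cite{Margulis-Tomanov-almost-linear} then lets me take $L_x$ to be an open finite-index subgroup of $\LL_x(\Q_S)\cap G$ for a connected algebraic $\Q$-subgroup $\LL_x\leq\GG$ canonically associated to the ergodic component (essentially as the $\Q$-Zariski closure of $g_x^{-1}\Gamma g_x\cap g_x^{-1}L_x g_x$ in $\GG$).

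The next step would be to show that $\LL_x$ can be chosen to be a fixed $\Q$-group $\LL$ for $\mu$-a.e.~$x$. Because $A$ normalizes $H_u$, the $H_u$-ergodic components are permuted by $A$ via $a.\mu_x^\cE = \mu_{a.x}^\cE$, and the induced $A$-action on the factor space $Y$ of $H_u$-ergodic components is ergodic thanks to $H$-ergodicity of $\mu$ and $H=\langle A, H_u\rangle$. The assignment $x\mapsto\LL_x$ is Borel and takes values in the countable set of $\Q$-subgroups of $\GG$ (once a faithful $\Q$-representation of $\GG$ is fixed, algebraic subgroups defined by polynomials of bounded degree form a countable collection). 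A countability-plus-ergodicity argument, using the canonical $\Q$-structure from \cite{Margulis-Tomanov-almost-linear} to bypass the fact that conjugation by $A\leq G$ does not preserve the $\Q$-structure of $\GG$, then forces $\LL_x$ to be constant almost surely, and accordingly lets me take $L_x$ to equal a fixed open finite-index subgroup $L\leq \LL(\Q_S)\cap G$.

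Finally, I would derive the support statement from \cite{Tomanov-orbits}. With $\LL$ and $L$ fixed, $\mu$ is supported on a measurable union of closed $L$-orbits $\Gamma L g_x$, and any element of $G$ that permutes these orbits while preserving their normalized Haar measure must lie in $N_G^1(\LL)$. Since $A$-invariance of $\mu$ forces $A$ to permute these orbits in this way, and since $H_u\leq L$ already preserves each individual orbit, the full group moving $\supp\mu$ within itself in a measure-preserving way is contained in $N_G^1(\LL)$. By \cite{Tomanov-orbits} the orbit $\Gamma N_G^1(\LL)g$ is closed, and by $A$-ergodicity on $Y$ it contains $\supp\mu$, giving $\supp\mu\subseteq \Gamma N_G^1(\LL)g$ as claimed. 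The main obstacle is the middle paragraph: rigorously pinning down a single $\Q$-group via a countable-measure-valued dichotomy, because the action of $A$ on the space of $\Q$-subgroups is only equivariant-under-conjugation rather than invariant, and this is exactly where the canonical $\Q$-rational description in \cite{Margulis-Tomanov-almost-linear} does the key work.
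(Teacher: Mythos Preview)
Your overall strategy matches the paper's: decompose into $H_u$-ergodic components, apply Ratner/Margulis--Tomanov, then upgrade to a single $\Q$-group using ergodicity and \cite{Tomanov-orbits}. However, you have misidentified where the real work lies and, crucially, you never invoke the class-$\mathcal{A}'$ hypothesis.

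The difficulty is \emph{not} in pinning down the $\Q$-group $\LL$. If $x=\Gamma g$ and $\LL_g$ is Tomanov's $\Q$-group for the $H_u$-ergodic component at $x$, then for $a.x=\Gamma ga^{-1}$ one checks directly that $(ga^{-1})L_{a.x}(ga^{-1})^{-1}=gL_xg^{-1}$, so $\LL_{ga^{-1}}=\LL_g$; the $\Gamma$-conjugacy class of $\LL_g$ is already $A$-invariant, and countability plus $H$-ergodicity immediately gives a single $\LL$ working on a set of positive (hence, one hopes, full) measure. The paper does this too, and it is not where \cite{Margulis-Tomanov-almost-linear} is doing ``the key work''.

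The genuine gap is in your final paragraph. Knowing that the same $\LL$ works for a.e.~$x$ does \emph{not} by itself yield $\supp\mu\subseteq\Gamma N_G^1(\LL)g_0$. For two generic points $x_i=\Gamma g_i$ with $\LL_{g_i}=\LL$ one has $g_1g_2^{-1}\in N_G(\LL)$ if and only if $\operatorname{Lie}(L_{x_1})=\operatorname{Lie}(L_{x_2})$ as subspaces of $\mathfrak g$. But $L_{a.x}=aL_xa^{-1}$, so $x\mapsto\operatorname{Lie}(L_x)$ is only $A$-equivariant under $\operatorname{Ad}$, not $A$-invariant; no canonical $\Q$-rational normalization fixes this, and without constancy of $\operatorname{Lie}(L_x)$ there is no reason the set $\Gamma N_G(\LL)g_0$ is $A$-invariant, so your appeal to ``$A$-ergodicity on $Y$'' has no force.

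What the paper does---and what your outline is missing---is to use the class-$\mathcal{A}'$ assumption together with Poincar\'e recurrence: along a recurrence sequence $a^{n_k}.x\to x$ (in a Luzin set) one gets $\operatorname{Ad}_{a^{n_k}}\operatorname{Lie}(L_x)\to\operatorname{Lie}(L_x)$ in the Grassmannian, and the class-$\mathcal{A}'$ eigenvalue condition forces any such limit to be $a$-normalized. This yields $A$-normality of $\operatorname{Lie}(L_x)$, hence by ergodicity its constancy, which is precisely what makes $\Gamma N_G(\LL)g_0$ an $H$-invariant set of positive measure. A second Poincar\'e recurrence argument (for the continuous volume function $\Gamma g\mapsto\vol(\Gamma\LL(\Q_S)g)$, now well-defined along $A$-orbits since $A\subset g_0^{-1}N_G(\LL)g_0$) then upgrades $N_G(\LL)$ to $N_G^1(\LL)$.
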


Here~$N^1_{G}(\LL)$ denotes the group
\[
N^1_{G}(\LL)=\{g\in G: \text{$g$ normalizes~$\LL$ and preserves the Haar measure on~$\LL(\Q_S)$}\}.
\]
Not that in general $N^1_{G}(\LL)$
is \emph{not} the group of~$\Q_S$-points of an algebraic group. In fact, if~$S$ contains more than two places, say~$\infty$ and~$p$, it is possible
for some~$g\in N^1_{G}(\LL)$ to satisfy that each of its components~$g_\sigma$ normalizes~$\LL$ 
or equivalently its Lie algebra~$\mathfrak l$, the real component
has~$\det\left(\operatorname{Ad}_{g_\infty}|_{\mathfrak l}\right)=p>1$ 
and~$\|\det\left(\operatorname{Ad}_{g_p}|_{\mathfrak l}\right)\|_p=1/p<1$
for some prime~$p\in S$. In such cases the algebraic 
group~$N_{\GG}(\LL)$ has a rational character,~$N^1_{G}(\LL)$ contains the~$\Q_S$-points of the kernel
of the character as well as some non-algebraic subgroup of an additional torus subgroup.

Theorem~\ref{MTclassA} differs mildly
from the theorem in~\cite{Margulis-Tomanov-almost-linear} in
the assumptions: We are assuming that the projection of~$A$ to the real component
is contained in the connected component of an~$\R$-diagonalizable subgroup, 
while in~\cite{Margulis-Tomanov-almost-linear} the real component of every element of~$A$ 
must have eigenvalues which are also powers of a single number. This does not affect
the proof of the theorem, see the sketch below. 

The identification of the possible algebraic measures in terms of $\Q$ subgroups $\LL<\GG$ is taken from~\cite{Tomanov-orbits} and allows to simplify the argument of~\cite{Margulis-Tomanov-almost-linear} in the arithmetic case.

\begin{proof}[Sketch of proof of Theorem~\ref{MTclassA}.]
	We recall that the ergodic decomposition
for the action of~$U$ can actually be obtained from the disintegration of~$\mu$
into conditional measures~$\mu=\int\mu_x^{\mathcal{E}}d\mu$,
where~$\mathcal{E}=\{B\in\mathcal{B}_X: \mu(B\triangle u. B)=0$
for all~$u\in H_u\}$ is the~$\sigma$-algebra of~$H_u$-invariant sets.
By Ratner's measure classification the ergodic components~$\mu_x^{\mathcal{E}}$ are 
(wherever they are defined and ergodic) algebraic,
i.e.\ are the Haar measures on closed orbits~$L_x . x$. 

By assumption every~$a\in A$ normalizes~$H_u$ and so~$\mathcal{E}$ is an~$A$-invariant~$\sigma$-algebra.
This in turn implies that~$\mu_{a.x}^{\mathcal{E}}=(a)_*\mu_{x}^{\mathcal{E}}$ a.s.
This shows that~$\operatorname{Ad}_a$ maps the Lie algebra of~$L_x$ onto the Lie algebra of~$L_{a.x}$. The class-$\mathcal A'$
assumption and Poincare recurrence now imply
that a.s.~the Lie algebra of~$L_x$ is normalized by~$a\in A$. By ergodicity under 
the action of~$H=\langle A,H_u\rangle$ it follows
that the Lie algebra of~$L_x$ is a.s.~independent of~$x$.

By~\cite{Tomanov-orbits} there exists for each~$x=\Gamma g$ as above some~$\Q$-group~$\LL_{g}$ such that~$L_x$
is a finite index subgroup of~$g^{-1}\prod_{\sigma\in S}\LL_{g}(\Q_\sigma) g$. 
As there are only countably many algebraic subgroups~$\LL<\GG$ defined over~$\Q$ there exists one such group~$\LL<\GG$
and a set~$B\subset X$ of positive measure such that~$\LL_g=\LL$ for some choice of the representative~$g$
of every element~$x=\Gamma g\in B$. Now choose~$x_0=\Gamma g_0\in B$ such that~$H.x_0$ is dense in~$\supp\mu$ and note 
that the above shows that~$\Gamma N_{G}(\LL) g_0$ is~$H$-invariant and has positive measure.  
Therefore,~$\mu(\Gamma N_{G}(\LL) g_0)=1$. For $\Gamma g \in \Gamma N_{G}(\LL) g_0$ the function sending $\Gamma g$ to the volume of~$\Gamma\LL(\Q_S)g$ (an orbit that we can rewrite as $\Gamma g (g_0^{-1}\LL(\Q_S)g_0)$ if the representative $g$ is chosen out of $N_{G}(\LL) g_0$)
is a finite continuous function. Using Poincare recurrence 
this shows that~$a\in A$ must preserve the Haar measure on~$g_0^{-1}\LL(\Q_S)g_0$
and so~$\mu$ is supported on the orbit~$\Gamma N^1_{G}(\LL) g_0$. 

It remains to show we can find a single finite index subgroup $L <\LL(\Q_S)$ so that  a.s.~the ergodic component~$\mu_x^{\mathcal{E}}$
of~$\mu$ for the point~$x=\Gamma g$ and for the action of~$H_u$ satisfies $\mu_x^{\mathcal{E}}=m_{\Gamma Lg}$. We leave establishing this fact to the reader; the full argument can be found in~\cite{Margulis-Tomanov-almost-linear}. 
\end{proof}

\section{Leaf-wise measures and entropy}\label{sec: conditionalm}

A fairly general construction of leaf-wise measures is presented in \cite[Sect. 3]{Lindenstrauss-03}. A slightly more special construction (sufficient for our purposes) of these measures appear in \cite{Pisa-notes}. Here we  summarize some important properties of these leaf-wise measures.
Mostly we review known facts that can be found e.g.\ in~\cite{Pisa-notes}, though some bits (for instance Proposition~\ref{prop-faithful}) are new.

\subsection{Basic properties}\label{basic properties}

We will be working with
Zariski connected subgroups of $G ^ \Psi$ for $\Psi \subset \Phi $ with $(\Psi + \Psi) \cap \Phi  \subset \Psi$ and $| \alpha(a_0)| < 1$ for some fixed $ a_0 \in A$ and all $\alpha \in \Psi$.
 These are automatically 
Zariski connected unipotent subgroups of~$G$.
If a Zariski connected unipotent subgroup $U<G$ is normalized by $A$ then for every $x  \in X$ and $a  \in
A$, $a.(U. x) =x U a^{-1} =x a^{-1} U = U.(a.x)$, so that the {\em foliation} of $X$ into $U$-orbits is invariant under the action of $A$. We will say that $a
\in A$ {\em expands} ({\em contracts}) the $U$-leaves, or simply $U$, if the absolute values of all eigenvalues (using of course~$|\cdot|_\sigma$ for the eigenvalues in~$\Q_\sigma$) of $ \operatorname{Ad} _ a$ restricted to the Lie algebra of $U$ are greater (smaller) than one.

For any locally compact metric space $Y$ let $\mathcal{M} _ \infty (Y)$ denote
the space of Radon measures on $Y$ equipped with the weak$^*$ topology, i.e.\
all locally finite Borel measures on $Y$ with the coarsest topology for which
$\rho \mapsto \int _ Y f (y) \operatorname{d}\! \rho (y)$ is continuous for every compactly
supported continuous $f$. For two Radon measures $ \nu_1$ and $ \nu_2$ on $Y$ we
write
\begin{equation*}
\nu_1\propto \nu_2\mbox{ if } \nu_1=C \nu_2\mbox{ for some }C>0
\end{equation*}
and say that $\nu_1$ and $\nu_2$ are proportional.

We let $B^Y_ \epsilon (y)$ (or $B _  \epsilon (y)$ if $Y$ is understood) denote
the ball of radius $ \epsilon$ around $y  \in Y$; if $H$ is a group we set $B ^
H _ \epsilon = B ^ H _ \epsilon (e)$; and if $H$ acts on $X$ and $x  \in X$ we let $B ^ H _ \epsilon (x) = B ^ H _ \epsilon .x$.

Let $ \mu$ be an $A$-invariant probability measure
on $X$. For any Zariski connected unipotent subgroup $U < G^ \Psi < G$ normalized by $A$, one
has a system  $ \left\{ \mu_{x,U} \right\} _ {x  \in X}$ of Radon
measures on $U$ and a co-null set $X' \subset X$ with the
following properties:
\begin{enumerate}[ref=(\arabic*)]
\item The map $x \mapsto  \mu _ {x, U}$ is measurable.
\item For every $\epsilon > 0$ and $x  \in X '$ it holds that $ \mu _ {x, U}
(B ^ U _  \epsilon)>0$.
\item\label{5.1.3} For every $x  \in X'$ and $u  \in U$
with $u.x  \in X '$, we have that $ \mu _ {x,U} \propto ( \mu _ {u.x,
U}) u$, where  $( \mu _ {u.x, U}) u$ denotes the push forward of the
measure $ \mu _ {u.x, U}$ under the map $v \mapsto vu$.
\item\label{5.1.4}
For every $a  \in A$, and $x, a .x \in X'$, $ \mu _
{ a .x, U} \propto  (\theta_a)_* ( \mu _{x,U})$ where~$\theta_a:U\to U$ is the conjugation
defined by~$\theta_a(u)=aua^{-1}$.
\end{enumerate}
In general, there is no canonical way to normalize the measures $ \mu _{x,U}$;
we fix a specific normalization by requiring that $ \mu _{x,U} (B ^ U _ 1) = 1$
for every $x  \in X'$. This implies the next crucial property.
\begin{enumerate}[resume,ref=(\arabic*)]
\item \label{5.1.5} If $U \subset C( a )=\{g \in G _ S: g a= a g\}$
is centralized by $ a \in A$, then
$ \mu_ { a.x, U} =  \mu _{x,U}$ whenever $x, a .x \in X'$.
\item $ \mu$ is $U$-invariant if, and only if, $ \mu _ {x, U}$ is
a Haar measure on $U$ a.e.\ (see \cite[Prop.~4.3]{Lindenstrauss-03}).
\end{enumerate}

The other extreme to invariance as above is where $ \mu _ {x, U}$ is atomic. If
$  \mu$ is $A$-invariant then outside some set of measure zero if $ \mu _ {x, U}$
is atomic then it is supported on the single point $ e  \in U$, in which case
we say that {\em $ \mu _ {x, U}$ is trivial}.

The leaf-wise measures for the Zariski connected unipotent subgroup $ G ^ {[\alpha]} $ associated to a Lyapunov weight
 $ \alpha \in \Phi  $ we denote by $ \mu _ x ^{ [\alpha]}$, and more generally we write $\mu _ x ^ \Psi$ for the leaf-wise measures on $G ^ \Psi$ when $\Psi \subset \Phi $ is a set of Lyapunov weights such that $G ^ \Psi$ is a Zariski connected unipotent subgroup.
Note that if~$d\geq 2$ and~$\alpha$ is a nonzero Lyapunov weight and
 $a  \in A$ is such that $|\alpha (a) |=1 $, then we have for
 the leaf-wise measures $ \mu_x^{[\alpha]}$ that property~\ref{5.1.5} above holds.
 Of particular importance to us is the following characterization of
positive entropy.
\begin{enumerate}[resume,ref=(\arabic*)]
\item\label{5.1.7} Let $\Psi = \{ \alpha \in \Phi _ S: \alpha (a)<0\}$ so that $ G ^ \Psi=G_a^-$ is the {\em horospherical stable subgroup} defined by $a$. Then the measure theoretic entropy
$ \operatorname{h}_ \mu( a)$ is positive if and only if the leaf-wise measures
$ \mu_{x,G_a^-}$ are nonatomic a.e.
\end{enumerate}

\subsection {Entropy contribution } \label{Entropy contribution}
In this section we refine property~\ref{5.1.7} above to a more quantitative statement. For more details see \cite[Sect.~9]{Margulis-Tomanov} and \cite[Sect.~9]{EinsiedlerKatokNonsplit}.

Let $U$ be a Zariski connected unipotent subgroup normalized by $A$ such that $a \in A$ contracts  $U$. Then for any
$a$-invariant probability measure $\mu $ on $X $ the limit
\begin{equation}\label{volume decay}
\vol_\mu(a, U,x)=-\lim_{n\rightarrow\infty}
\frac{\log\mu_{x,U} \bigl(\theta_a^n(B_1^ U)\bigr)}{n}
\end{equation}
exists for a.e.\ $x\in X $ by \cite[Lemma 9.1]{EinsiedlerKatokNonsplit}.
If furthermore, $\mu_{x, U }$ is supported
by a Zariski connected unipotent subgroup $P\subseteq U $ that is  normalized by $A$, then
\begin{equation}\label{module bound}
\vol_\mu(a,U,x)\leq
\md(a,P)=\sum_{\alpha\in\Psi}
\alpha(a)^-\dim\bigl(\fp\cap\fg^\alpha\bigr)
\end{equation}
for a.e.\ $x\in X$. Here we write $r^-=\max(0,-r)$ for the negative part of $r\in\R$, $\fp$ for the Lie algebra of $P$ and~$\dim\bigl(\fp\cap\fg^\alpha\bigr)$ denotes the sum of the dimensions~$\dim_{\Q_\sigma}\bigl(\fp\cap\fg^\alpha\cap\mathfrak g_\sigma\bigr)$ for all~$\sigma\in S$, where~$\mathfrak g_\sigma$ denotes the Lie algebra of~$\GG(\Q_\sigma)$. 
 Note that $\mathfrak p$ is a direct sum
of its subspaces $\mathfrak p \cap \mathfrak g ^ \alpha$,
since $P$ is normalized by $A$.  In fact, $\md(a,P)$ is the negative logarithm of the module of the restriction of $\theta_a$ to $P$.
It is easy to check that $\vol_\mu(a,U,\cdot)$ is $A$-invariant,
and so constant for an $A$-ergodic measure. We write $\operatorname{h} _ \mu (a, U) $
for the integral of $\vol_\mu(a,U,\cdot)$, and will refer to it as the {\em entropy contribution of $U$}.

A $\sigma$-algebra $\cA$ of Borel subsets of
$ X$ is {\em subordinate to $U$} if $\cA$ is countably generated,
for every $ x \in  X$ the atom $[ x]_\cA$ of $ x$ with respect to
$\cA$ is contained in the leaf $ U.x$, and for a.e.\ $ x$
\[
B_\epsilon^U .x\subseteq [ x]_\cA\subseteq
B_\rho^U.x\mbox{ for some } \epsilon>0\mbox{ and
}\rho>0.
\]
The conditional measures~$\mu_x^{\cA}$ for a ~$\sigma$-algebra~$\cA$ which is subordinate
to~$U$ are strongly related to the leaf-wise measure~$\mu_{x,U}$. Indeed, if~$V_x\subset U$
describes the atom in the sense that~$V_x.x=[x]_\cA$ then the conditional measure
can be obtained by the push-forward 
\begin{equation*}
  \mu_x^\cA\propto  \left(\mu_{x,U}|_{V_x}\right).x
\end{equation*}
of the leaf-wise measure~$\mu_{x,U}$ restricted to~$V_x$.

A $\sigma$-algebra $\cA$ is {\em $a$-decreasing} if
$a^{-1}\cA \subseteq\cA$.
By \cite[Lemma 9.3]{EinsiedlerKatokNonsplit} we have
\begin{equation}\label{eq:entropy and algebras}
\Hh_\mu(\cA|a^{-1}\cA)=\int\vol_\mu(a,U,x)\operatorname{d}\!\mu.
\end{equation}
whenever $\cA$ is an $a$-decreasing $\sigma$-algebra that is subordinate to $U$.

A key technical fact which allows us to show the entropy contribution for $G ^ - _ a$ of $\mu$ coincides with the entropy is the following result, which has a long history in smooth dynamics and in our context can be found in \cite{Margulis-Tomanov}:

\begin{proposition} [{\cite[Prop. 9.2]{Margulis-Tomanov}}]\label{generator proposition}
Let $\mu$ be an $a$-invariant and ergodic measure on $X = \Gamma \backslash G$. Then there exist a countable partition $\mathcal{P}$ with finite entropy which is a generator for $a$ modulo $\mu$. Moreover, the $\sigma$-algebra $\cA = \bigvee_ {n \geq 0} a ^ {- n} \mathcal{P}$ is $a$-decreasing and subordinate to $G ^ - _ a$.
\end{proposition}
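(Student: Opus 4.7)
The plan is to build a countable partition $\cP$ of $X$ whose atoms are, for $\mu$-typical points, bounded open plaques inside individual $G^-_a$-leaves, supplemented by a tail refinement of a small-measure complement to guarantee finite entropy. Once every atom of $\cP$ already lies inside a single $G^-_a$-leaf and $a$ maps $G^-_a$-leaves bijectively to $G^-_a$-leaves, the same property is inherited by each $a^{-n}\cP$ and hence by $\cA=\bigvee_{n\geq 0}a^{-n}\cP$. The $a$-decreasing property is then formal since $a^{-1}\cA=\bigvee_{n\geq 1}a^{-n}\cP\subseteq\cA$.

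First I would fix a compact set $K\subset X$ with $\mu(K)>1-\delta$ and a small $\epsilon>0$ so that, for every $x\in K$, the local product map $(u^-,u^{0+})\mapsto u^-u^{0+}.x$ from $B^{G^-_a}_\epsilon\times B^{G^0_aG^+_a}_\epsilon$ is a bi-Lipschitz homeomorphism onto a neighborhood of $x$; this uses the Lie algebra decomposition $\fg=\fg_a^-\oplus\fg_a^0\oplus\fg_a^+$ and the local freeness of the $G$-action. I would cover $K$ by finitely many such Bowen boxes, refine to a Borel partition of $K$, and within each piece slice along the $G^-_a$-fibers of the box coordinates to produce atoms of the form $W(z).z$ with $W(z)\subseteq G^-_a$ open and bounded, and $z$ ranging over a measurable transversal. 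For $X\setminus K$, use a countable Borel partition (constructed similarly, in shells exhausting the space) whose $\mu$-masses decay geometrically so that $H_\mu(\cP)<\infty$.

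To verify that $\cA$ is subordinate to $G^-_a$: containment of $[x]_\cA$ in a single $G^-_a$-leaf is inherited from $\cP$, and the upper bound $[x]_\cA\subseteq B^{G^-_a}_\rho.x$ is immediate from $[x]_\cA\subseteq[x]_\cP$. The lower bound $B^{G^-_a}_{\epsilon(x)}.x\subseteq[x]_\cA$ requires that $\operatorname{dist}\bigl(a^n.x,\partial[a^n.x]_\cP\bigr)$ decays at most subexponentially on a full-measure set; combined with the exponential contraction $\|\theta_a^n(u)\|\to 0$ for $u\in G^-_a$, this fits a whole $G^-_a$-ball of some positive radius $\epsilon(x)$ inside every $[a^n.x]_\cP$ after contraction. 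The non-accumulation estimate follows from a Borel--Cantelli argument applied to shrinking neighborhoods of the partition boundary, using the $a$-invariance of $\mu$.

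For the generator property, for $\mu$-a.e.\ pair $x\ne y$ I need some $n\in\Z$ with $[a^n.x]_\cP\ne[a^n.y]_\cP$: if $y$ and $x$ lie on distinct $G^-_a$-leaves (within their common box) then they already lie in distinct plaque atoms of $\cP$; if $y=u.x$ with $u\in G^-_a\setminus\{e\}$ then $a^{-n}.y=\theta_a^{-n}(u).a^{-n}.x$ with $\theta_a^{-n}(u)$ eventually exceeding the plaque diameter, placing the points in distinct atoms of $a^{-n}\cP$. The main obstacle I expect is constructing the plaque partition globally: gluing the locally defined plaque partitions across overlapping boxes and across the exhaustion of $X$ in a Borel way while keeping each atom inside one $G^-_a$-leaf, and controlling the regularity of the partition boundary so that the Borel--Cantelli step runs. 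A secondary issue is calibrating box sizes and tail masses so that $H_\mu(\cP)$ remains finite without spoiling the generator property.
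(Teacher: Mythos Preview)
The paper does not prove this proposition; it is quoted from \cite{Margulis-Tomanov} and the reader is referred there or to \cite[Prop.~7.43]{Pisa-notes}. So there is no in-paper argument to compare against, but your proposal does contain a genuine gap.

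You ask that the atoms of $\cP$ itself already be $G_a^-$-plaques: within each Bowen box you ``slice along the $G_a^-$-fibers'' and take atoms $W(z).z$ as $z$ ranges over a transversal. But the transversal is a positive-dimensional piece of $G_a^0G_a^+$, so this produces \emph{uncountably} many atoms, and such a partition has infinite entropy, contradicting the requirement that $\cP$ be countable with $H_\mu(\cP)<\infty$. What you have actually written down is a $G_a^-$-subordinate $\sigma$-algebra (a measurable partition into plaques in the sense of Rokhlin), not a countable partition whose one-sided tail is subordinate. Your generator argument then leans on exactly this feature (``distinct leaves $\Rightarrow$ distinct $\cP$-atoms''), so it collapses once countability is enforced.

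In the construction of \cite{Margulis-Tomanov} and \cite{Pisa-notes} the logic runs in the opposite direction. One takes $\cP$ countable, with atoms of small diameter on a large compact set and with null, suitably thin boundaries; the atoms of $\cP$ are \emph{not} contained in single $G_a^-$-leaves. The inclusion $[x]_\cA\subset G_a^-.x$ only emerges after the infinite refinement $\cA=\bigvee_{n\ge 0}a^{-n}\cP$: if $y\in[x]_\cA$ then $a^n.y$ remains in the small-diameter atom $[a^n.x]_\cP$ for all $n\ge 0$, and since conjugation by $a^n$ does not contract the $G_a^0G_a^+$-component of the displacement this forces $y\in G_a^-.x$ (the $G_a^0$-direction requiring the generating and thin-boundary properties rather than pure expansion). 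Your Borel--Cantelli step for the lower bound $B_{\epsilon(x)}^{G_a^-}.x\subset[x]_\cA$ is the right mechanism and does survive in the correct construction; it is the upper-bound and generator parts that need to be reworked once $\cP$ is made countable.
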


For proof, see \cite{Margulis-Tomanov} or \cite[Prop. 7.43]{Pisa-notes}. 

If $\cP$ is generating for $a$ mod $\mu$, and $\cA = \bigvee_ {n \geq 0} a ^ {- n} \mathcal{P}$ then clearly  $\operatorname{h} _ \mu (a) = \operatorname{H} _ \mu (\cA | a^{-1}\cA)$. This implies by \eqref{eq:entropy and algebras}, at least for $a$-ergodic ergodic $\mu$ that 
\begin{equation}\label{entropy contribution of horospherical}
\operatorname{h} _ \mu (a) =h_\mu(a,G^-_a)= \int\vol_\mu(a,G_a^-,x)\operatorname{d}\!\mu;
\end{equation}
and can be derived using the ergodic decomposition from the ergodic case also for $\mu$ not $a$-ergodic, see \cite[Prop.~9.4]{EinsiedlerKatokNonsplit} or \cite[Sect. 7.24]{Pisa-notes} for details.

Using the partition $\cP$ constructed in Proposition~\ref{generator proposition} for $\mu$ ergodic, given  a subgroup $U < G ^ - _ a$ one can construct a countably generated $\sigma$-algebra $\mathcal{A} _ U$ subordinate to $U$ from the $G ^ - _ a$-subordinate decreasing $\sigma$-algebra $\mathcal{A} = \bigvee_ {n \geq 0} a ^ {- n}  \mathcal{P}$ in such a way that for $\mu$-a.e.~$x \in X$ the $\mathcal{A} _ U$ atom of $x$ is related to that of $\mathcal{A}$ by
\begin{equation*}
[x]_{\mathcal{A} _ U} = [x]_{\mathcal{A}} \cap U.x\ 
\end{equation*}
(this condition determines $\mathcal{A} _ U$ up to $\mu$-null sets, but something needs to be said to construct such a countably generated $\sigma$-algebra $\mathcal{A} _ U$.) The important properties of $\mathcal{A} _ U$ for our purposes are the following:
\begin{enumerate}[label=\emph{(\roman*)},align=left,labelindent=\parindent,leftmargin=*,ref=(\roman*)]
\item \label{A-U 1}
$\mathcal{A} _ U$ is subordinate to $U$
\item \label{A-U 2} $\mathcal{A} _ U$ is $a$-decreasing
\item \label{A-U 3}$\mathcal{A} _ U = \mathcal{P} \vee a ^{-1} \mathcal{A}_U$
.\end{enumerate}
For details and proofs of the above claims, see \cite[Sect.~7]{Pisa-notes}, especially \cite[Prop.~7.37]{Pisa-notes}.

\begin{lemma}\label{lemma-1-faithful}
	 Let~$X=\Gamma\backslash G$ and~$a\in G$ be as above. 
	Let~$V<U\leq G_a^-$ be two~$a$-normalized Zariski connected unipotent subgroups. Let~$\mu$
 be an~$a$-invariant probability measure on~$X$. Then~$h_\mu(a,V)\leq h_\mu(a,U)$.
Moreover, the following are equivalent;
\begin{enumerate}[align=left,labelindent=\parindent,leftmargin=*,ref=(\arabic*)]
	\item\label{5.2.1} $h_\mu(a,V)= h_\mu(a,U)$,
	\item\label{5.2.2} $\mu_{x,U} (V)>0$ a.s., and
	\item\label{5.2.3} $\mu_{x,U}=\mu_{x,V}$ a.s.
\end{enumerate}
\end{lemma}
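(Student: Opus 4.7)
The plan is to combine the subordinate $\sigma$-algebra machinery of \S\ref{Entropy contribution} with the $a$-equivariance of leafwise measures from properties~\ref{5.1.3}--\ref{5.1.5}. For the main inequality $h_\mu(a,V)\leq h_\mu(a,U)$, I would first apply Proposition~\ref{generator proposition} to obtain a finite-entropy partition $\cP$ generating for $a$, and set $\cA=\bigvee_{n\geq 0}a^{-n}\cP$. From $\cA$ and $\cP$, I would construct the $\sigma$-algebras $\cA_U$ and $\cA_V$ with atoms $[x]_{\cA_U}=[x]_\cA\cap U.x$ and $[x]_{\cA_V}=[x]_\cA\cap V.x$ as in the paragraph following Proposition~\ref{generator proposition}; both are $a$-decreasing, subordinate to $U$ and $V$ respectively, and satisfy $\cA_W=\cP\vee a^{-1}\cA_W$ (property~\ref{A-U 3}). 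Since $V.x\subseteq U.x$, the atoms of $\cA_V$ refine those of $\cA_U$, so $\cA_V\supseteq\cA_U$ and $a^{-1}\cA_V\supseteq a^{-1}\cA_U$. Combining property~\ref{A-U 3} with~\eqref{eq:entropy and algebras} then gives $h_\mu(a,U)=H_\mu(\cP\mid a^{-1}\cA_U)$ and $h_\mu(a,V)=H_\mu(\cP\mid a^{-1}\cA_V)$, and monotonicity of conditional entropy under refinement of the conditioning $\sigma$-algebra yields $h_\mu(a,V)\leq h_\mu(a,U)$.

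For the equivalences, the implications \ref{5.2.3}$\Rightarrow$\ref{5.2.2} and \ref{5.2.3}$\Rightarrow$\ref{5.2.1} follow directly: $\mu_{x,U}=\mu_{x,V}$ forces $\mu_{x,U}(V)\geq\mu_{x,V}(B_1^V)=1>0$, and since $\mu_{x,U}$ is then supported on $V$ one has $\mu_{x,U}(\theta_a^nB_1^U)=\mu_{x,V}(\theta_a^nB_1^V)$, so the volumes in~\eqref{volume decay} coincide. For \ref{5.2.2}$\Rightarrow$\ref{5.2.1}, my plan is to apply the standard uniqueness of leafwise measures to the restriction $\mu_{x,U}|_V$---which inherits the $V$-cocycle from~\ref{5.1.3} for $U$---obtaining $\mu_{x,V}=\mu_{x,U}|_V/\mu_{x,U}(B_1^V)$. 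Substituting this into~\eqref{volume decay} and using $\theta_a^nB_1^V=\theta_a^nB_1^U\cap V\subseteq\theta_a^nB_1^U$ produces the pointwise lower bound $\vol_\mu(a,V,x)\geq\vol_\mu(a,U,x)$; combined with the integrated inequality $h_\mu(a,V)\leq h_\mu(a,U)$ already established, this forces pointwise equality a.s., hence~\ref{5.2.1}.

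The main obstacle is the remaining implication \ref{5.2.2}$\Rightarrow$\ref{5.2.3}, which requires upgrading ``$\mu_{x,U}(V)>0$'' to ``$\mu_{x,U}$ is entirely supported on $V$''. My plan here is a zero-one argument via $a$-equivariance: because both $V$ and $U\setminus V$ are $\theta_a$-invariant, property~\ref{5.1.4} forces $\log\mu_{x,U}(V)$ and $\log\mu_{x,U}(U\setminus V)$ to transform under $a$ by the same additive cocycle $-\log\mu_{x,U}(\theta_a^{-1}B_1^U)$, so the ratio $q(x)=\mu_{x,U}(U\setminus V)/\mu_{x,U}(V)$ is $a$-invariant and hence constant on ergodic components. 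I expect to use the integrated formula $h_\mu(a,U)=\int\log\mu_{x,U}(\theta_a^{-1}B_1^U)\,d\mu$ together with Birkhoff's theorem and Poincar\'e recurrence to show that on any $a$-ergodic component with $h_\mu(a,U)>0$, each of $\mu_{x,U}(V)$ and $\mu_{x,U}(U\setminus V)$ must lie in $\{0,\infty\}$, and then to exclude the case $\mu_{x,U}(U\setminus V)=\infty$ using the pointwise volume equality from the previous step, which constrains how the mass outside $V$ can concentrate in the shrinking balls $\theta_a^nB_1^U$. On ergodic components with zero entropy, $\mu_{x,U}=\delta_e\in V$ trivially, and the dichotomy reduces to~\ref{5.2.3}. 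The careful exclusion of the $\mu_{x,U}(U\setminus V)=\infty$ case is the technical heart of the argument and is expected to be the main difficulty, though the tools needed are parallel to those developed in~\cite{Pisa-notes}.
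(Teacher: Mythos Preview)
Your treatment of the inequality $h_\mu(a,V)\leq h_\mu(a,U)$ and of the implications \ref{5.2.3}$\Rightarrow$\ref{5.2.1},\ref{5.2.2} and \ref{5.2.2}$\Rightarrow$\ref{5.2.1} matches the paper's proof essentially verbatim. The gap is structural: your chain never starts from~\ref{5.2.1}. With \ref{5.2.3}$\Rightarrow$\ref{5.2.1},\ref{5.2.2}, \ref{5.2.2}$\Rightarrow$\ref{5.2.1}, and the proposed \ref{5.2.2}$\Rightarrow$\ref{5.2.3}, you would have \ref{5.2.2}$\Leftrightarrow$\ref{5.2.3} and both $\Rightarrow$\ref{5.2.1}, but no implication \ref{5.2.1}$\Rightarrow$\ref{5.2.2} or \ref{5.2.1}$\Rightarrow$\ref{5.2.3}. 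So the equivalence is incomplete even if your zero-one argument succeeds. The paper closes the cycle by proving \ref{5.2.1}$\Rightarrow$\ref{5.2.3} directly via a short information-theoretic argument: from $H_\mu(\mathcal{P}\mid a^{-1}\cA_V)=H_\mu(\mathcal{P}\mid a^{-1}\cA_U)$ one deduces, for any finite partition $\mathcal{Q}\subset\cA_V$, that $H_\mu(\mathcal{Q}\mid\cA_U)=H_\mu(\mathcal{Q}\mid\mathcal{P}_{-n}^n\vee\cA_U)$; letting $n\to\infty$ and using that $\mathcal{P}$ is generating gives $H_\mu(\mathcal{Q}\mid\cA_U)=0$, hence $\cA_V=\cA_U$ mod~$\mu$, hence the conditional (and thus leafwise) measures coincide. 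This argument works purely at the level of $\sigma$-algebras and never touches the measure of $U\setminus V$.

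Your proposed direct route \ref{5.2.2}$\Rightarrow$\ref{5.2.3} also has a substantive problem beyond the missing link above. The ratio $q(x)=\mu_{x,U}(U\setminus V)/\mu_{x,U}(V)$ need not be well defined: $\mu_{x,U}$ is only locally finite, $V$ is noncompact, and under recurrence one typically has $\mu_{x,U}(V)=\infty$, so $q(x)$ may be $\infty/\infty$. More seriously, the pointwise equality $\vol_\mu(a,V,x)=\vol_\mu(a,U,x)$ is only an asymptotic statement about exponential decay rates of $\mu_{x,U}(\theta_a^nB_1^U)$ versus $\mu_{x,U}(\theta_a^nB_1^U\cap V)$; it does not by itself preclude mass on $U\setminus V$ that contributes at the same exponential order. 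I do not see how the tools you cite from~\cite{Pisa-notes} give the required exclusion, and this is precisely what the paper avoids by going through \ref{5.2.1}$\Rightarrow$\ref{5.2.3} instead. The cleanest fix is to replace your proposed \ref{5.2.2}$\Rightarrow$\ref{5.2.3} by the paper's \ref{5.2.1}$\Rightarrow$\ref{5.2.3}; then your already-correct \ref{5.2.2}$\Rightarrow$\ref{5.2.1} closes the loop.
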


\begin{proof}
	Assume first~$\mu$ is~$a$-ergodic. 
	Let~$\mathcal{P}$ be a countable generator for~$a$ with 
	respect to~$\mu$ as in Proposition~\ref{generator proposition} and $\cA_U, \cA_V$ $a$-decreasing $\sigma$-algebras subordinate to $U$ and $V$ respectively satisfying \ref{A-U 1}--\ref{A-U 3}; it follows that $\cA_U \subset \cA_V$.
Therefore,
\[
h_\mu(a,V)=H_\mu(\mathcal{P}|a^{-1}\mathcal{A}_V)
\leq H_\mu(\mathcal{P}|a^{-1}\mathcal{A}_U)=h_\mu(a,U)
\]
 by the monotonicity
of the entropy function.

It is clear that \ref{5.2.3} implies both~\ref{5.2.1} and~\ref{5.2.2}.
So suppose now equality of the entropy contributions holds as in~\ref{5.2.1}. Then 
\begin{align*}
 (n+1)h_\mu(a,U)&=(n+1)h_\mu(a,V)=H_\mu(\mathcal{P}_{-n}^0|\mathcal{A}_V)=
                 H_\mu(\mathcal{P}_{-n}^n|\mathcal{A}_V)\\
&=H_\mu(\mathcal{P}_{-n}^0|\mathcal{A}_U)=
								                 H_\mu(\mathcal{P}_{-n}^n|\mathcal{A}_U),
\end{align*}
where we write~$\mathcal{Q}_k^\ell=\bigvee_{n=k}^{\ell}a^{-n}.\mathcal{Q}$
for the dynamical refinement of a~$\sigma$-algebra~$\mathcal{Q}$ whenever~$k\leq\ell$.
Now fix some finite partition~$\mathcal{Q}\subset\mathcal{A}_V$ and notice that
\[
 H_\mu(\mathcal{P}_{-n}^n|\mathcal{A}_V)=
                 H_\mu(\mathcal{P}_{-n}^n|\mathcal{Q}\vee\mathcal{A}_U)=
								 H_\mu(\mathcal{P}_{-n}^n|\mathcal{A}_U)
\]
(where we first have inequalities which in light of the above must be equalities).
Therefore,
\begin{align*}
 H_\mu(\mathcal{P}_{-n}^n\vee\mathcal{Q}|\mathcal{A}_U)&=
 H_\mu(\mathcal{Q}|\mathcal{A}_U)+
 H_\mu(\mathcal{P}_{-n}^n|\mathcal{Q}\vee\mathcal{A}_U)\\
 &=H_\mu(\mathcal{P}_{-n}^n|\mathcal{A}_U)+
 H_\mu(\mathcal{Q}|\mathcal{P}_{-n}^n\vee\mathcal{A}_U),	
\end{align*}
which gives~$H_\mu(\mathcal{Q}|\mathcal{A}_U)=
H_\mu(\mathcal{Q}|\mathcal{P}_{-n}^n\vee\mathcal{A}_U)$.
As~$\mathcal{P}$ is a generator, we have~$\mathcal{P}_{-n}^n\nearrow\mathcal{B}_X$ (the Borel $\sigma$-algebra on $X$) as~$n\to\infty$. Therefore,~$\mathcal{Q}\subset\mathcal{A}_U$ modulo~$\mu$.
As~$\mathcal{Q}\subset\mathcal{A}_V$ was an arbitrary finite partition
we see that~$\mathcal{A}_V=\mathcal{A}_U$ modulo~$\mu$ and so~$\mu_x^{\mathcal{A}_U}=\mu_x^{\mathcal{A}_V}$ a.s.
Since conditional measures characterize leafwise measures a.s.~we get~$\mu_{x,V}=\mu_{x,U}$
a.e.\ as in \ref{5.2.3} (first when restricted to a neighborhood of the identity 
and then by using Property~\ref{5.1.4}
from \S\ref{basic properties} for~$a^{-1}$ without restrictions). 

Assume now that~\ref{5.2.2} holds and let~$\mathcal{A}_U$ and~$\mathcal{A}_V$ be as above.
As~$\mu_{x,U}$ is locally finite and we assume that~$\mu_{x,U}(V)>0$ for a.e.~$x$,
it follows that~$\mu_{x,U}$ is supported on countably many 
subsets~$Vu_1,Vu_2,\ldots$ for~$\{u_1,u_2\ldots\}\subset U$. This shows that a.s.
every atom of the~$\sigma$-algebra~$\mathcal{A}_U$ splits, up to a nullset,
into at most countably many
atoms for the~$\sigma$-algebra~$\mathcal{A}_V$. However, in this case
the conditional measure for~$\mathcal{A}_V$ can easily be obtained from the conditional 
measure for~$\mathcal{A}_U$; simply by restricting to the correct atom and renormalizing
the resulting measure to be a probability measure. Translating this statement to the leafwise
measure we see that~$\mu_{x,V}$ is obtained by restricting~$\mu_{x,U}$ to~$V$ 
and renormalizing it to satisfy the property~$\mu_{x,V}(B_1^V)=1$. Therefore, there exists a constant~$c_x$ such that
\[
 \mu_{x,V}(a^n B_1^V a^{-n})\leq c_x \mu_{x,U}(a^n B_1^U a^{-n}).
\]
Now take the logarithm, divide by~$-n$, and take the limit to obtain~$\vol_\mu(a,V,x)\geq\vol_\mu(a,U,x)$ a.s. Taking the integral we obtain~$h_\mu(a,V)=h_\mu(a,U)$, i.e.~\ref{5.2.1} which implies also~\ref{5.2.3}.

If~$\mu$ is not ergodic with respect to~$a$ we take its ergodic decomposition, which can be obtained by using the conditional measures~$\mu_x^{\mathcal{E}}$ for the~$\sigma$-algebra
$\mathcal{E}=\{B\subset X: B=a.B\}$. For a.e.~$x$ the leafwise measure of the ergodic component~$\mu_x^{\mathcal{E}}$ satisfies~$(\mu_x^{\mathcal{E}})_{y,U}=\mu_{y,U}$ for~$\mu_x^{\mathcal{E}}$-a.e.~$y$ (see e.g.~\cite[Prop.~7.22]{Pisa-notes}), and hence the ergodic case of the lemma implies the
general one.
\end{proof}

\subsection{A corollary of the eigenvalue assumption regarding the support of leafwise measures}\label{cor-section}

\begin{lemma}\label{cor-eigenvalue}
	Let~$X=\Gamma\backslash G$ and~$A<G$ be as in Theorem~\ref{higher rank}. Let~$\mu$
	be an~$A$-invariant measure, let~$a_0\in A$, and let~$U\leq G_{a_0}^-$ be an~$A$-normalized Zariski
	connected unipotent subgroup. 
	For~$x\in X'$ define~$P_x^U\leq U$ to be the smallest Zariski connected closed subgroup that contains~$\supp\mu_{x,U}$. 
	Then for a.e.~$x\in X'$ the subgroup~$P_x^U$ is~$A$-normalized and~$P_{a.x}^U=P_x^U$ for all~$a\in A$, hence for $\mu$ $A$-ergodic is equal a.e.~to a single $A$ normalized group $P^U \leq U$.
\end{lemma}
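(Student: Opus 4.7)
The plan is to show that for $\mu$-a.e.\ $x$ the Lie algebra $\mathfrak{p}_x\subseteq\mathfrak{u}$ of $P_x^U$ is $\mathrm{Ad}_A$-invariant, which is precisely the statement that $P_x^U$ is $A$-normalized; the identity $\mathfrak{p}_{a.x}=\mathrm{Ad}_a(\mathfrak{p}_x)$ (established below) then forces $P_{a.x}^U=P_x^U$ a.s., and for $A$-ergodic $\mu$ this $A$-invariant measurable function must be essentially constant.

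First, from property~\ref{5.1.4} of \S\ref{basic properties}, $\mu_{a.x,U}\propto(\theta_a)_*\mu_{x,U}$; taking Zariski closures of supports yields $\mathfrak{p}_{a.x}=\mathrm{Ad}_a(\mathfrak{p}_x)$ for every $a\in A$ and $\mu$-a.e.\ $x$. Thus the Borel map $\Phi\colon x\mapsto \mathfrak{p}_x$ into the (closed) variety of Lie subalgebras of $\mathfrak{u}$ inside the appropriate Grassmannian is $A$-equivariant. I would then combine Lusin's theorem (producing a compact $K\subseteq X$ of measure close to $1$ on which $\Phi$ is continuous) with Poincar\'e recurrence (applied to the positive-measure sets $K\cap B(x,\varepsilon)$ as $\varepsilon\to 0$, at $\mu$-density points $x$ of $K$) to extract, for $\mu$-a.e.\ $x$ and every $a\in A$, a sequence $n_j\to\infty$ with $a^{n_j}.x\in K$ and $a^{n_j}.x\to x$. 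Continuity of $\Phi|_K$ then gives $\mathrm{Ad}_{a^{n_j}}(\mathfrak{p}_x)\to\mathfrak{p}_x$ in the Grassmannian, i.e.\ $\mathfrak{p}_x$ lies in the closure of its own $\langle a\rangle$-orbit.

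The main step, and the place where the class-$\mathcal{A}'$ hypothesis enters, is the claim that such recurrence of an orbit of a subspace $V\subseteq\mathfrak{u}$ forces $\mathrm{Ad}_a V=V$. I would work place-by-place: via the Pl\"ucker embedding write a representative of $V\cap\mathfrak{u}_\sigma$ as $\sum_J c_J e_J$ in an $\mathrm{Ad}_a$-eigenbasis with eigenvalues $\mu_J$, and normalize $\mathrm{Ad}_{a^{n_j}}$ by $\mu_{J_0}^{n_j}$ where $J_0$ maximizes $|\mu_J|_\sigma$ among $J$ with $c_J\neq0$. The normalized iterate converges to $\sum_{J:\,|\mu_J|_\sigma=|\mu_{J_0}|_\sigma}c_J e_J$. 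The critical observation is that class-$\mathcal{A}'$ forces distinct eigenvalues of $\mathrm{Ad}_a$ on $\mathfrak{u}_\sigma$ to have distinct $\sigma$-adic absolute values (distinct positive reals for $\sigma=\infty$, distinct powers of $\lambda_p$ for $\sigma=p$); hence equal absolute value means equal eigenvalue, the limit lies in a single $\mathrm{Ad}_a$-eigenspace, and comparing with $V$ forces $c_J=0$ whenever $\mu_J\neq\mu_{J_0}$, which is equivalent to $\mathrm{Ad}_a V=V$. Intersecting null sets over the countable group $A=a(\Z^d)$ gives $A$-invariance of $\mathfrak{p}_x$ a.s.

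The hard part is the Pl\"ucker/eigenvalue step: without the class-$\mathcal{A}'$ assumption a subspace could genuinely recur to itself along its orbit without being $\mathrm{Ad}_a$-invariant (for instance whenever two distinct eigenvalues share an absolute value, as is common for diagonalizable actions that admit a nontrivial compact or neutral part), so this is precisely the point at which the eigenvalue hypothesis is indispensable.
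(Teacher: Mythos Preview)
Your proposal is correct and follows essentially the same approach as the paper: equivariance from property~\ref{5.1.4}, a Lusin set on which the relevant map is continuous, Poincar\'e recurrence to get $a^{n_j}.x\to x$ along a subsequence, and then the class-$\mathcal A'$ assumption to conclude that the limit of $\mathrm{Ad}_{a^{n_j}}\mathfrak p_x$ is $\mathrm{Ad}_a$-stable. The paper is terser at two points where you give more detail: it simply asserts that ``as $A$ is of class-$\mathcal A'$ any limit point of $a^{n_k}P_x^Ua^{-n_k}$ is automatically normalized by $a$'' (your Pl\"ucker argument is a correct unpacking of this), and it reaches $A$-normalization by varying $a$ only over the open cone of elements contracting $U$ (which already generates $A$) rather than over all of $A$ as you do. Both routes are fine.
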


\begin{proof}
  We outline the argument, for more details see e.g.~\cite[Sect.~6]{EinsiedlerKatokNonsplit} (where a similar
statement is proven under weaker assumptions for the maximal subgroup of~$U$ that leaves~$\mu_{x,U}$ invariant).
 Since~$(\theta_{a})_*\mu_{x,U}=\mu_{a.x,U}$ for a.e.~$x$ by~(4) in \S\ref{basic properties} 
we also have~$P_{a.x}^U=aP_x^U a^{-1}$. Now let~$K\subset X'$ be a Luzin set of almost full measure on
which~$x\in K\mapsto\mu_{x,U}$ is continuous. Fix some~$a\in A$ that contracts~$U$. It follows
that for a.e.~$x\in K$ there exists~$n_k\to\infty$ with~$a^{n_k}.x\to x$ as~$k\to\infty$. 
By continuity we see that~$\mu_{x,U}$ is also supported on any limit point of~$a^{n_k}P_x^Ua^{-n_k}$ (where we
take the limit within the product of the appropriate Grassmannian varieties) -- 
by equality of the dimensions we see that~$a^{n_k}P_x^U a^{-n_k}$
converges to~$P_x^U$. However, as $a \in A$ and $A$ is of class-$\mathcal A'$ any limit point of~$a^{n_k}P_x^Ua^{-n_k}$ is automatically normalized by~$a$. 
As~$a\in A$ was an arbitrary element of an open cone in $A$,
we see that~$P_x^U$ is~$A$-normalized. Increasing the Luzin set~$K$ the statement
follows a.e.
\end{proof}

%%%%%%%%%%%%%%%%%%%%%%%%%%%%%%%%%%%%%%%%%%%%%%%%%%%%%%%%%%%%%%%%%%%%%%%

\subsection{Faithful recurrence}
Let $X=\Gamma \backslash G$ be an $S$-arithmetic quotient, $\mu$ a probability measure on $X$, and $H < G$ an algebraic subgroup (i.e.~a product of algebraic subgroups over $\Q_s$ for every $s \in S$).
Recall that recurrence of $H$
with respect to $\mu$ is equivalent to $ \mu _ {x,H }(H) = \infty$ by \cite [Prop.~4.1]{Lindenstrauss-03}. 

\begin{definition}
We say $\mu$ is {\em faithfully $H$-recurrent} if it is $H$-recurrent and for almost every $x$ there does not exist a proper Zariski closed subgroup $H' < H$ such that $\mu _ {x, H}$ is supported on~$H '$. 
\end{definition}

\begin{remark}\label{faithful remark}
 For $U$ and $\mu$ as in \S\ref{cor-section}, faithful $U$-recurrence is equivalent to $P^U=U$.
 \end{remark}
 
 The only non-trivial part is in this statement is to show that $\mu$ is $U$-recurrent, which in view of \cite [Prop.~4.1]{Lindenstrauss-03} follows from the fact that for $a$-invariant $\mu$ for a.e.~$x$ where $\mu_{x,U}$ is finite in fact $\mu_{x,U}$ is the trivial measure (see \cite[Thm.~7.6.(iii)]{Pisa-notes}).

\begin{proposition}\label{prop-faithful}
	 Let~$X=\Gamma\backslash G$ be an~$S$-arithmetic quotient. Let~$a\in G$ be of class-$\cA'$ and let $U<G_a^-$ be an~$a$-normalized
	Zariski connected unipotent subgroup. Suppose~$\mu$
	is an~$a$-invariant probability measure on~$X$. Then $\mu$ is faithfully~$U$-recurrent iff
	for every measurable~$B\subset X$, every proper~$a$-normalized subgroup~$V<U$,
	and a.e.~$x\in B$ there exists a sequence~$u_k\in U$ with~$u_k.x\in B$ for all~$k$,
	and~$u_kV$ going to infinity in~$U/V$ as~$k\to\infty$.
\end{proposition}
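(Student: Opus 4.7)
We treat the two directions separately.

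For the backward direction ($\Leftarrow$), specializing the return property to $V=\{e\}$ (the trivial $a$-normalized proper subgroup of nontrivial $U$) yields, for every $B$ with $\mu(B)>0$ and a.e.\ $x\in B$, a sequence $u_k\in U$ with $u_k.x\in B$ and $u_k\to\infty$; this is $U$-recurrence of $\mu$ by the standard equivalence in \cite[Prop.~4.1]{Lindenstrauss-03} together with Remark~\ref{faithful remark}.  For faithfulness, argue by contradiction.  Assuming $\supp(\mu_{x,U})$ lies in a proper Zariski closed subgroup on a positive-measure set, pass to an $a$-ergodic component and run the Luzin/Poincar\'e argument of Lemma~\ref{cor-eigenvalue} to produce a proper $a$-normalized Zariski connected subgroup $V<U$ and a positive-measure set $B$ on which $\supp(\mu_{x,U})\subset V$.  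Apply the return property to this pair $(B,V)$ to get $u_k\in U$ with $u_k.x\in B$ and $u_kV\to\infty$ in $U/V$.  Property~\ref{5.1.3} gives $\supp(\mu_{x,U})=\supp(\mu_{u_k.x,U})\cdot u_k\subset V\cdot u_k$, so
\[
\supp(\mu_{x,U})\subset V\cap Vu_k.
\]
Since $\mu_{x,U}(B_1^U)=1>0$ forces $e\in\supp(\mu_{x,U})$, and $V\cap Vu_k$ is either $V$ (if $u_k\in V$) or empty, we conclude $u_k\in V$; but then $u_kV=V$, contradicting $u_kV\to\infty$.

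For the forward direction ($\Rightarrow$), assume $\mu$ is faithfully $U$-recurrent and, toward a contradiction, that the return property fails: there exist a measurable $B$, a proper $a$-normalized $V<U$, $n\in\N$, and $B_0\subset B$ of positive measure with $\{u:u.x\in B\}\subset\pi_V^{-1}(B_n^{U/V})$ for every $x\in B_0$.  By $U$-recurrence applied to $B_0$, one has $\mu_{x,U}\bigl(\pi_V^{-1}(B_n^{U/V})\bigr)=\infty$ for a.e.\ $x\in B_0$.  The aim is to deduce $\supp(\mu_{x,U})\subset V$ on a positive-measure set, contradicting faithful $U$-recurrence.  Since $V$ is $a$-normalized and $U<G_a^-$, the element $a$ contracts $U/V$; by property~\ref{5.1.4}, the supports satisfy $\supp(\mu_{a^m.x,U})=a^m\supp(\mu_{x,U})a^{-m}$, whose projection to $U/V$ is contracted by $a^m$.

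Choose a positive-measure compact $K\subset B_0$ on which $x\mapsto\mu_{x,U}$ is weak-$*$ continuous (Luzin's theorem), and for $\mu$-a.e.\ $x\in K$ select $m_k\to\infty$ with $a^{m_k}.x\in K$ and $a^{m_k}.x\to x$ (Poincar\'e recurrence).  Along this subsequence, $\mu_{a^{m_k}.x,U}$ is proportional to $(\theta_{a^{m_k}})_*\mu_{x,U}$ with a normalization constant determined by $\mu_{x,U}(B_1^U)=1$; the bounded-return hypothesis constrains the mass of $\mu_{a^{m_k}.x,U}$ near the identity to lie in the progressively thinner slab $\pi_V^{-1}(a^{m_k}B_n^{U/V}a^{-m_k})$, which collapses onto $V$.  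Combining the continuity of $x\mapsto\mu_{x,U}$ on $K$ with the $a$-contraction forces $\mu_{x,U}(V)>0$; then Lemma~\ref{lemma-1-faithful} upgrades this to $\mu_{x,U}=\mu_{x,V}$, so $\supp(\mu_{x,U})\subset V$, the desired contradiction.

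The main obstacle is this final step, passing from the set-theoretic ``bounded-return'' hypothesis to the topological statement $\supp(\mu_{x,U})\subset V$: because $\mu_{x,U}$ is an infinite measure, one cannot directly take limits of mass on shrinking sets.  The careful normalization $\mu_{x,U}(B_1^U)=1$ together with the comparison of the $a$-contracted slab against the $a$-contracted unit ball (both shrinking at rates controlled by the Lyapunov weights) is needed, after which Lemma~\ref{lemma-1-faithful} converts ``positive leafwise mass on $V$'' into the required support statement.
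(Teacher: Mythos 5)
Your backward direction is essentially the paper's argument made explicit via the translation property~\ref{5.1.3}: if $\supp\mu_{x,U}\subset V$ on a positive-measure set and the return property held, the resulting $u_k$ would all have to lie in $V$, a contradiction. Two small caveats: Lemma~\ref{cor-eigenvalue} is stated for $A$-invariant $\mu$ with the full higher-rank group, but you are right that its proof only uses the one contracting element and adapts to $a$-invariant $\mu$; and the assertion $e\in\supp\mu_{x,U}$ deserves a word --- it is a standard a.e.~consequence of~\ref{5.1.3}, not of the normalization $\mu_{x,U}(B_1^U)=1$, which by itself only yields $\supp\mu_{x,U}\cap B_1^U\neq\emptyset$.

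Your forward direction has a genuine gap and is not how the paper proceeds. The claim that ``the bounded-return hypothesis constrains the mass of $\mu_{a^{m_k}.x,U}$ near the identity to lie in the progressively thinner slab $\pi_V^{-1}(a^{m_k}B_n^{U/V}a^{-m_k})$'' is not justified: the hypothesis $\{u:u.y\in B\}\subset\pi_V^{-1}(B_n^{U/V})$ for $y\in B_0$ constrains the \emph{return set} of $y$ to $B$, not $\supp\mu_{y,U}$. The leafwise measure $\mu_{y,U}$ restricted to $B_1^U$ may put most of its mass outside any $V$-slab --- those $u$ simply take $y$ outside $B$ --- and nothing you have written prevents the ratio $\mu_{y,U}\bigl(B_1^U\cap\pi_V^{-1}(a^{m}B_n^{U/V}a^{-m})\bigr)/\mu_{y,U}(B_1^U)$ from tending to zero; the weak-$*$ limit then does not give $\mu_{x,U}(V)>0$. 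The ``comparison of the $a$-contracted slab against the $a$-contracted unit ball'' you invoke does not close this, because the growth of $\mu_{x,U}$ in the $V$-direction is uncontrolled. The paper avoids the issue entirely by working with the conditional probability measures $\mu_x^{a^{-n}.\mathcal{A}_U}$ for an $a$-decreasing, $U$-subordinate $\sigma$-algebra $\mathcal{A}_U$: there both ``mass in $B$'' and ``mass in the $V$-neighborhood $B_\rho^UV.x$'' are honest probabilities. The decreasing martingale theorem gives $\mu_x^{a^{-n}.\mathcal{A}_U}(B)\to\mu_x^{\mathcal{A}_\infty}(B)>0$ a.e.\ on $B$, while faithfulness (through Lemma~\ref{lemma-1-faithful}, giving $\mu_{x,U}(V)=0$) makes $\mu_x^{\mathcal{A}_U}(B_\rho^UV.x)<\varepsilon$ on a large set $Y_\varepsilon$; the ergodic theorem then produces infinitely many $n$ with $a^n.x\in Y_\varepsilon$ and $\mu_x^{a^{-n}.\mathcal{A}_U}(B)>\varepsilon$, forcing a return $u.x\in B$ with $u\notin a^{-n}B_\rho^UVa^n$. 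The decreasing martingale step is precisely the ingredient your sketch is missing, and I do not see how your leafwise-only version can be completed without reintroducing it.
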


\begin{proof}
  The ``if'' direction follows from property \ref{5.1.4} of \S\ref{basic properties} by chosing $B$ to be the co-null set $X'$ of that section.
  
  For the other direction, we note first that using the ergodic decomposition it suffices to consider the case of $\mu$ ergodic under $a$. Let~$\mathcal{A}_U$ be an~$a$-decreasing~$U$-subordinate~$\sigma$-algebra on~$X$ as in \S\ref{Entropy contribution}. By assumption~$a^{-n}.\mathcal{A}_U\searrow\mathcal{A}_\infty$
for a~$\sigma$-algebra~$\mathcal{A}_\infty$. Therefore the decreasing Martingale convergence theorem
gives a.s.
\begin{equation}\label{martingale equation}
 \mu_x^{a^{-n}.\mathcal{A}_U}(B)(x)=E(1_B|a^{-n}.\mathcal{A}_U)\to
 E(1_B|\mathcal{A}_\infty)(x)=\mu_x^{\mathcal{A}_\infty}(B)
\end{equation}
as~$n\to\infty$. Recall also that for a.e.~$x\in B$ we have~$\mu_x^{\mathcal{A}_\infty}(B)>0$.

By the assumption of faithful recurrence and Lemma~\ref{lemma-1-faithful} 
we have~$\mu_{x,U}(V)=0$ and so for every~$\varepsilon>0$ there exists some~$\rho>0$ such that
\[
 Y_\varepsilon=\{x\in X:  \mu_x^{\mathcal{A}_U}(B_\rho^U V . x)<\varepsilon \}
\]
has measure~$\mu(Y_\varepsilon)>1/2$. Applying the ergodic theorem we get a set~$X'$ of full measure
such that for all~$x\in X'$ the above pointwise Martingale convergence holds
and there exists infinitely many~$n\geq 0$ with~$a^n.x\in Y_\varepsilon$. 

Suppose now~$x\in X'$ and~$\mu_x^{\mathcal{A}_\infty}(B)>\varepsilon$. Then by \eqref{martingale equation} and the definition of~$X'$ we can find infinitely many~$n\geq 0$
with~$\mu_x^{a^{-n}.\mathcal{A}_U}(B)>\varepsilon$ and~$a^n.x\in Y_\varepsilon$. The latter gives
\[
 \mu_x^{a^{-n}.\mathcal{A}_U}((a^{-n}B_\rho^U V a^n ).x)=\mu_{a^n.x}^{\mathcal{A}_U}(B_\rho^UV.(a^n.x))<\varepsilon.
\]
It follows that there exists a~$u\in U\setminus(a^{-n}B_\rho^UVa^n)$ with~$u.x\in B$.
As this holds for infinitely many~$n$ the proposition follows for~$x$. Letting~$\varepsilon$ go to zero,
we obtain the result as required for a.e.~$x\in B$.
\end{proof}

%%%%%%%%%%%%%%%%%%%%%%%%%%%%%%%%%%%%%%%%%%%%%%%%
\section{Finer structure of the leaf-wise measures}\label{fine structure}

\subsection{The structure of leaf-wise measures for the horospherical subgroup and the high entropy method} \label{product structure}
$\nope$ %tautological latex code to enable line break after subsection

Fix some $a\in A$, and let $\Psi =\{ \alpha \in \Phi : G ^ {[\alpha]} \subset G_a^-\}$ (recall that $G ^{ [\alpha]}$ is the coarse Lyapunov subgroups corresponding to~$[\alpha]$).  
The leafwise measure $\mu_{x,G_a^-}$ are determined by the leafwise measures on the leaves of the coarse Lyapunov folliations. We recall here the structure theorem describing this relation \cite[Thm.~8.4--8.5]{EinsiedlerKatokNonsplit} in a form that will be convenient in the case of an acting group of class-$\mathcal A'$.

\begin{theorem}[{\cite[Thm.~8.4]{EinsiedlerKatokNonsplit}}]\label{EKtheorem}
Fix an arbitrary order $\{ [\alpha _ 1],\ldots, [\alpha _ \ell]\}$ on the coarse Lyapunov weights $\{[\alpha]: \alpha\in\Psi\}$
and define $\phi: \prod_ {i=1}^\ell G ^ {[\alpha_i]} \rightarrow G_a^-$ by $\phi (u_1,\ldots,u_\ell)=u_1\cdots u_\ell$ for any $(u_1,\ldots,u_\ell) \in \prod_ {i=1}^\ell G ^ {[\alpha_i]}$.  Then
\begin{equation}\label{mu product}
\mu_{x,G_a^-}\propto
 \phi_*\bigl(\mu_x^{[\alpha_1]}\times\cdots\times\mu_x^{[\alpha_\ell]}\bigr)
\mbox{ a.e.}
\end{equation}
More generally for any Zariski connected~$A$-normalized subgroup~$U<G_a^-$ one has
\begin{equation}\label{mu product generalized}
\mu_{x,U}\propto
 \phi_*\bigl(\mu_{x, U\cap G^{[\alpha_1]}}\times\cdots\times\mu_{x, U\cap G^{[\alpha_\ell]}}\bigr)
\mbox{ a.e.}
\end{equation}
\end{theorem}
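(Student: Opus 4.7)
The first formula \eqref{mu product} is a direct application of \cite[Thm.~8.4]{EinsiedlerKatokNonsplit} to the horospherical stable subgroup $G_a^-$. My plan for the generalization \eqref{mu product generalized} is to observe that $U$ is structurally analogous to $G_a^-$ and then run the same high entropy argument with $U$ in place of $G_a^-$.

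First, since $U$ is Zariski connected and normalized by the class-$\cA'$ group $A$, its Lie algebra $\mathfrak u$ is $\operatorname{Ad}_A$-invariant and therefore decomposes into joint weight spaces:
\begin{equation*}
\mathfrak u = \bigoplus_{i=1}^{\ell}\bigl(\mathfrak u \cap \mathfrak g^{[\alpha_i]}\bigr).
\end{equation*}
Each summand $\mathfrak u \cap \mathfrak g^{[\alpha_i]}$ is a Lie subalgebra of the nilpotent algebra $\mathfrak g^{[\alpha_i]}$, so $U \cap G^{[\alpha_i]} = \exp(\mathfrak u \cap \mathfrak g^{[\alpha_i]})$ is a Zariski connected $A$-normalized unipotent subgroup, and the multiplication map $\phi$ restricts to a polynomial bijection $\prod_i (U \cap G^{[\alpha_i]}) \to U$. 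In particular $\{U \cap G^{[\alpha_i]}\}_i$ constitutes the coarse Lyapunov decomposition of $U$.

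With this structural fact in hand, the plan is to run the proof of \cite[Thm.~8.4]{EinsiedlerKatokNonsplit} with the group $U$ in place of $G_a^-$ throughout. The ingredients of that proof, namely the exponential coordinates on a Zariski connected unipotent subgroup normalized by $A$, the commutation relations \eqref{weights add}, and the existence of class-$\cA'$ elements that separate distinct coarse Lyapunov components by their scaling, are all internal to $U$, so the argument transfers verbatim. Alternatively, one can argue by induction on $\ell$: peel off $U \cap G^{[\alpha_1]}$ via a single high entropy step to obtain
\begin{equation*}
\mu_{x,U} \propto (\phi_1)_*\!\bigl(\mu_{x, U \cap G^{[\alpha_1]}} \times \mu_{x, U'}\bigr), \qquad U' = U \cap \prod_{i \geq 2} G^{[\alpha_i]},
\end{equation*}
and then apply the inductive hypothesis to $U'$. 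The main obstacle, and the one the high entropy method is specifically designed to overcome, is ruling out nontrivial correlations between the leafwise measures on distinct coarse Lyapunov components of $U$; this is where the distinct scaling rates afforded by the class-$\cA'$ hypothesis become essential, enabling a polynomial divergence and Poincar\'e recurrence argument to force the product structure.
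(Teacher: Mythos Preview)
This theorem is not proved in the paper: it is quoted directly from \cite[Thm.~8.4]{EinsiedlerKatokNonsplit}, as the attribution in the theorem heading and the surrounding text (``We recall here the structure theorem\ldots'') make explicit. Both \eqref{mu product} and the general form \eqref{mu product generalized} are already contained in that cited result, so in this paper there is nothing to compare your sketch against.

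As a description of how the argument in \cite{EinsiedlerKatokNonsplit} goes, your outline is broadly correct: the Lie algebra of $U$ splits into its coarse Lyapunov pieces, and the product structure of $\mu_{x,U}$ is forced by choosing, for each pair $[\alpha_i]\neq[\alpha_j]$, an element $a'\in A$ that scales the two pieces at different rates (in particular one can take $a'\in\ker\alpha_i$ with $\alpha_j(a')\neq 0$). Two small points. First, ``high entropy argument'' is a misnomer here: the product structure \eqref{mu product} is the \emph{input} to the high entropy method (Theorem~\ref{high entropy}), not an application of it; the mechanism is rather the compatibility of leafwise measures under elements of $A$ (properties~\ref{5.1.4}--\ref{5.1.5} of \S\ref{basic properties}). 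Second, in your inductive step the set $U'=U\cap\prod_{i\geq 2}G^{[\alpha_i]}$ need not be a subgroup for an arbitrary ordering of the coarse Lyapunov weights, since a commutator $[G^{[\alpha_i]},G^{[\alpha_j]}]$ with $i,j\geq 2$ can land in $G^{[\alpha_1]}$. The proof in \cite{EinsiedlerKatokNonsplit} sidesteps this either by working with subordinate $\sigma$-algebras rather than literal subgroup factorizations, or by choosing an ordering (e.g.\ by a height function) so that each tail is genuinely a subgroup.
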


Equations~\eqref{mu product} and \eqref{volume decay} implies that the following relation between the entropy and the contributions of each of the coarse Lyapunov weights:
\begin{equation} \label{Sum formula of entropy}
\operatorname{h} _ \mu (a) = \operatorname{h} _ \mu (a, G_a^-) = \sum_ {i=1}^\ell \operatorname{h} _ \mu (a,G ^{[\alpha _ i]})
\end{equation}
(cf.~also \cite{Hu-commuting-diffeomorphisms} for a closely related result about the entropy of commuting diffeomorphisms). Similarly, we have for any $U<G_a^-$ as in Theorem~\ref{EKtheorem} that
\begin{equation} \label{Sum formula of U entropy}
\operatorname{h} _ \mu (a, U)  = \sum_ {i=1}^\ell \operatorname{h} _ \mu (a,U \cap G ^{[\alpha _ i]})
.\end{equation}
In particular, as observed already in \cite{KatokSpatzier96}, positive entropy implies that there exists a coarse Lyapunov weight $[\alpha]$ such that the leaf-wise measure $\mu _ x ^{ [\alpha]}$ is nontrivial a.e. 

The argument establishing Theorem~\ref{EKtheorem} 
also establishes another important feature of the leafwise measure
(cf.~\cite[Cor.~6.5]{Lindenstrauss-03} and~\cite[Cor.~8.6]{Pisa-notes}). For this suppose
that~$H<G$ commutes with~$a$ and with~$G_a^-$.
Then there exists a set of full measure~$X'$ such that
\begin{equation}\label{mu product corollary}
 \mu_{x,G_a^-}=\mu_{h.x,G_a^-}\mbox{ whenever }h\in H\mbox{ and } x,h.x\in X'. 
\end{equation}

When the $G^{[\alpha_i]}$ commute which each other the ordering chosen for the coarse Lyapunov weights is not important. But when there are at least two noncommuting coarse Lyapunov groups contained in a single horospheric group, with the corresponding leafwise measures both non-trivial, for \eqref{mu product} to hold the leafwise measures $\mu _ {x,G_a^-}$ and hence $\mu$  must be invariant under a nontrivial unipotent group. This key observation is at the heart of what is known as the \textbf{high entropy method} of producing unipotent invariance:

\begin{theorem}[{\cite[Thm.~8.5]{EinsiedlerKatokNonsplit}}]\label{high entropy}
Let $\mu$ be an $A$-invariant and ergodic probability measure
on $X=\Gamma\backslash G$. Let $a\in A$.
Then there exist two connected
$A$-normalized subgroups $H\leq P\leq G_a^-$ such that
\begin{enumerate}
\item $\mu_{x,G_a^-}$ is supported by $P$ a.e.
\item $\mu_{x,G_a^-}$ is left- and right-invariant
under multiplication with elements of $H$.
\item $H$ is a normal subgroup of $P$ and any elements
$g\in P\cap G^{[\alpha]}$ and $h\in P\cap G^{[\beta]}$
of different coarse Lyapunov subgroups ($[\alpha]\neq[\beta]$)
satisfy that $g H$ and $h H$ commute with each other in
$P/H$.
\item $\mu_x^{[\alpha]}$ is left- and right-invariant
under multiplication with elements of $H\cap U^{[\alpha]} $ for all $\alpha$.
\end{enumerate}
\end{theorem}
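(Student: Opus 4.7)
The plan is to define $P$ and $H$ explicitly as stabilizer-type objects attached to the leaf-wise measure $\mu_{x,G_a^-}$, and then extract properties (3) and (4) from the product structure of Theorem~\ref{EKtheorem} applied to different orderings of the coarse Lyapunov weights. First, Lemma~\ref{cor-eigenvalue} applied with $U = G_a^-$ provides, for $\mu$-a.e.\ $x$, an $A$-normalized Zariski-connected subgroup $P = P_x^{G_a^-} \leq G_a^-$ which, by $A$-ergodicity of $\mu$, is a.s.\ equal to a single group $P$. This $P$ satisfies (1) tautologically. Next define
\[
H = \{h \in P : \text{both $(L_h)_* \mu_{x,G_a^-}$ and $(R_h)_* \mu_{x,G_a^-}$ are proportional to $\mu_{x,G_a^-}$}\},
\]
where the condition is required for a.e.\ $x$. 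Using the transformation rules~\ref{5.1.3} and~\ref{5.1.4} together with the Luzin/continuity argument used in Lemma~\ref{cor-eigenvalue}, one shows that $H$ is a Zariski-closed, $A$-normalized subgroup of $P$ independent of $x$ modulo $\mu$. Property (2) then holds by definition, and one verifies that $H$ is Zariski-connected by passing to the identity component (which is also $A$-normalized, hence preserved under the argument).

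The heart of the proof, and the main technical obstacle, is deducing (3). Fix two coarse Lyapunov weights $[\alpha]\neq [\beta]$ contributing to $G_a^-$, and choose two orderings of the coarse weights $\{[\alpha_1],\ldots,[\alpha_\ell]\}$ which differ only by a transposition of adjacent entries $[\alpha]$ and $[\beta]$. Applying~\eqref{mu product} to each ordering yields
\[
\phi_*^{(1)}\!\Bigl(\prod_i \mu_x^{[\alpha_i]}\Bigr)\;\propto\;\mu_{x,G_a^-}\;\propto\;\phi_*^{(2)}\!\Bigl(\prod_i \mu_x^{[\alpha_i]}\Bigr),
\]
and taking the quotient of these two expressions with the other factors held fixed reduces the identity to a statement on the two-factor product $\mu_x^{[\alpha]}\cdot \mu_x^{[\beta]} \propto \mu_x^{[\beta]}\cdot \mu_x^{[\alpha]}$ as measures on $G^{[\alpha]}G^{[\beta]}$. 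Restricting to a small neighborhood of $e$ and using absolute continuity of the local change-of-variables between the two parametrizations, one concludes that for a.e.\ $(u,v)$ in the product support one has $uvu^{-1}v^{-1}\in H$. Since the supports of $\mu_x^{[\alpha]}$ and $\mu_x^{[\beta]}$ generate $P\cap G^{[\alpha]}$ and $P\cap G^{[\beta]}$ Zariski-densely by the definition of $P$ combined with~\eqref{mu product generalized}, it follows that $[P\cap G^{[\alpha]},\,P\cap G^{[\beta]}]\subseteq H$. Running this for every pair of coarse weights shows that the images in $P/H$ of the various $P\cap G^{[\alpha]}$ commute, and also yields (by setting $[\alpha]=[\beta]$ and using that each $P\cap G^{[\alpha]}$ normalizes itself) that $H$ is normalized by each $P\cap G^{[\alpha]}$, hence by $P=\langle P\cap G^{[\alpha]}:\alpha\in\Psi\rangle$; this gives normality of $H$ in $P$ and completes (3).

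For (4), fix $\alpha$ and $h\in H\cap G^{[\alpha]}$. Use the product decomposition~\eqref{mu product generalized} for $U=G_a^-$ with $[\alpha]$ placed first in the ordering; left multiplication by $h$ acts on the product measure by left multiplication on the $\mu_x^{[\alpha]}$ factor alone. The left $H$-invariance of $\mu_{x,G_a^-}$ from property (2), combined with the uniqueness (up to proportionality) of the product decomposition, transfers to left invariance of $\mu_x^{[\alpha]}$ under $h$; placing $[\alpha]$ last yields right invariance. The main obstacle throughout is controlling the $x$-dependent proportionality constants and ensuring the Zariski-density claims for the supports of the $\mu_x^{[\alpha]}$ hold simultaneously for all pairs of coarse weights on a common conull set; this is achieved by working on a Luzin set of almost full measure on which $x\mapsto \mu_x^{[\alpha]}$ is continuous, pushing forward by $a^n$ to recur, and then exhausting by increasing Luzin sets, exactly as in~\cite[Sect.~8]{EinsiedlerKatokNonsplit}.
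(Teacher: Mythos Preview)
The paper does not give its own proof of this theorem: it is quoted as \cite[Thm.~8.5]{EinsiedlerKatokNonsplit} and used as a black box, with only the one-sentence heuristic preceding the statement (order-independence in~\eqref{mu product} forces unipotent invariance when coarse Lyapunov subgroups fail to commute). Your outline is an attempt to expand that heuristic into a proof, and you yourself defer the technical execution to \cite[Sect.~8]{EinsiedlerKatokNonsplit} at the end, so there is nothing in the present paper to compare against beyond the citation.

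On the substance of your sketch, two steps are doing more work than you acknowledge. The ``quotient with other factors held fixed'' is not a straightforward cancellation: $\phi$ is not a homomorphism, and $[G^{[\alpha]},G^{[\beta]}]$ may spill into other coarse Lyapunov pieces, so isolating a clean two-factor identity on $G^{[\alpha]}G^{[\beta]}$ already requires an argument. More seriously, even granting $\mu_x^{[\alpha]}\!\cdot\!\mu_x^{[\beta]}\propto\mu_x^{[\beta]}\!\cdot\!\mu_x^{[\alpha]}$, the passage to ``$uvu^{-1}v^{-1}\in H$ for a.e.\ $(u,v)$'' is not a change-of-variables consequence: knowing that $uv$ and $vu$ lie in the support of the same measure does not by itself put the commutator into the \emph{stabilizer} of that measure. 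This is exactly the step where the high-entropy argument of \cite{EinsiedlerKatokNonsplit} does its real work (via the transformation rule~\ref{5.1.3} for leafwise measures along the support, combined with recurrence under $A$), and your sketch stops short of it.
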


\subsection{Alignment and the low entropy theorem}\label{low-entropy}

We now recall from \cite{Low-entropy} our second
tool to show additional unipotent invariance, which is known as the low entropy method.

Let us start with the necessary terminology.
Let $X$ be a locally compact metric space with a Borel probability measure $\mu$,
let $a: X \rightarrow X$ be a measure preserving transformation of $(X, \mu)$,
let $H$ be a locally compact metric group acting continuously and locally free on $X$. We denote the action by $h.x$ for $h\in H$ and $x\in X$.
Let $F:X \rightarrow Y$ be a measurable map to a Borel space $Y$.
We say $\mu$ is $(F, H)$-{\em transient} if there exists a set $X ' \subset X$ of full measure such that there are no two different $x, y \in X '$ on the same $H$-orbit $H.x=H.y$ with $F (x) = F (y)$.
We say $\mu$ is {\em locally $(F, H)$-aligned} if for every $\epsilon > 0$ and compact neighborhood $O$ of the identity in $H$ there exists $B \subset X$ with $\mu (B) > 1 - \epsilon$ and some $\delta > 0$ such that $x, y \in B$
with distance $d (x, y) < \delta$ and $F (x) = F (y)$ implies that $y = h.x$
for some $h \in O$.

In the setting of Theorems~\ref{higher rank}--\ref{thm:perfect} we fix some $a\in A$ (which
we will specify later).    We assume that $a$ uniformly contracts the leaves of the foliation defined by the $U$-orbits of some nontrivial closed unipotent subgroup $U < G$. In other words that $ a$ normalizes $U$ and $U<G_a^-$.

In \cite[Thm.~1.4]{Low-entropy} we have shown the following dichotomy.

\begin{theorem}  \label{low entropy theorem}
Suppose~$G<\GG(\Q_S)$ is a finite index subgroup of an algebraic group~$\GG$ defined over~$\Q$,~$\Gamma<G$
a discrete subgroup, $X = \Gamma \backslash G$ and that~$a\in G$ is of class-$\mathcal A'$.
Let~$U<G_a^-$ be an~$a$-normalized Zariski connected unipotent subgroup.
Assume in addition that for all nontrivial $g \in G ^ +_a$ there exists some $ u \in U$ with $u g u ^{- 1} \not \in G ^ + _aG ^ 0_a$.
Then any $a$-invariant and faithfully $U$-recurrent probability measure $\mu$ on $X$ satisfies at least one of the following conditions:
\begin{enumerate}[ref=(\arabic*)]
\item\label{not transient item} $\mu$ is not $(\mu _ {x, U}, C_G(U) \cap G ^ -_a)$-transient.
\item\label{aligned item} $\mu$ is locally $(\mu _ {x , U},C_G(U)\cap C_G(a))$-aligned.
\end{enumerate}
\end{theorem}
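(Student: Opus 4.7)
The plan is to argue by contradiction: I will assume $\mu$ is $(\mu_{x,U}, C_G(U) \cap G^-_a)$-transient yet fails to be locally $(\mu_{x,U}, C_G(U) \cap C_G(a))$-aligned, and from this I will produce two distinct $x, y$ with $\mu_{x,U} = \mu_{y,U}$ and $y = h.x$ for some nontrivial $h \in C_G(U) \cap G^-_a$, a direct contradiction. The negation of local alignment yields, for some $\epsilon > 0$ and some compact neighborhood $O \subset C_G(U) \cap C_G(a)$ of the identity, a sequence of pairs $(x_n, y_n)$ drawn from a Lusin set $K$ of measure $> 1-\epsilon$ on which $x \mapsto \mu_{x,U}$ is weak-$*$ continuous, with $d(x_n, y_n) \to 0$, $\mu_{x_n,U} = \mu_{y_n,U}$, and yet $y_n \notin O.x_n$. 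Writing $y_n = g_n.x_n$ with $g_n \to e$, the local product structure around $e$ decomposes $g_n$ into a $G^-_a$-, a $G^0_a$-, and a $G^+_a$-component.

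The heart of the argument is a shearing construction combining faithful $U$-recurrence with the $a$-action in order to transform $g_n$ into a nontrivial element of $C_G(U) \cap G^-_a$ in the limit. If the $G^+_a$-component of $g_n$ is nontrivial, the conjugation hypothesis on $U$ produces some $u \in U$ for which $u g_n u^{-1}$ acquires a genuine component in $G^-_a$ modulo $G^+_a G^0_a$; Proposition~\ref{prop-faithful} then lets me choose $u_n \in U$, escaping to infinity, such that both $u_n.x_n$ and $u_n.y_n = (u_n g_n u_n^{-1}).(u_n.x_n)$ lie again in $K$. Applying a suitable power of $a^{-1}$ contracts the residual $G^+_a$-part down to the identity while the newly generated $G^-_a$-part remains of bounded but bounded-away-from-zero size. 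A symmetric move using $a$ handles the case of a nontrivial $G^0_a$-component not centralizing $a$, where the class-$\cA'$ hypothesis on $A$ supplies an element of $A$ whose eigenvalues on the offending direction are favorable; the remaining case $g_n \in G^-_a \setminus C_G(U)$ is handled by direct shearing by $U$ without any application of $a$.

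Passing to a subsequential limit $(x^*, y^*) \in K \times K$, I obtain $y^* = g^*.x^*$ with $g^* \in G^-_a$ nontrivial. Continuity of leafwise measures on $K$, together with properties~\ref{5.1.3}--\ref{5.1.4} of \S\ref{basic properties}, propagates the identity $\mu_{x_n,U} = \mu_{y_n,U}$ to the limit, where it becomes $\mu_{x^*,U} = (g^*)_*\mu_{x^*,U}$; this forces $g^*$ to lie in $C_G(U)$ on the level of leafwise measures. Hence $g^* \in C_G(U) \cap G^-_a$ is a nontrivial element carrying $x^*$ to a distinct point $y^*$ with $\mu_{x^*,U} = \mu_{y^*,U}$, contradicting the assumed transience. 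The main obstacle, and the delicate point carried out in \cite{Low-entropy}, is a quantitative bookkeeping ensuring simultaneously that the shearing produces a limit $g^*$ which is both genuinely nonzero (so the drift does not collapse to the identity during the iterative conjugations and $a$-pushes) and contained in $G^-_a$ (so that iterating $a^{\pm k}$ really does kill the $G^+_a$- and $G^0_a$-components without the $G^-_a$-component blowing up). Both controls hinge on the interplay between the conjugation hypothesis on $U$ and the class-$\cA'$ structure, which together provide enough independent control of eigendirections to steer the shearing past each of the three components of $g_n$.
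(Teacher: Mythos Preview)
The paper does not prove this theorem; it is quoted verbatim from \cite[Thm.~1.4]{Low-entropy} and used as a black box. So there is no ``paper's own proof'' to compare against beyond the citation. Your proposal is an attempt to sketch the argument of \cite{Low-entropy}, and the overall architecture --- assume both conclusions fail, extract nearby pairs with equal leafwise measures from the failure of alignment, shear by $U$ and renormalize by powers of $a$, and pass to a limit displacement contradicting transience --- is indeed the shape of the low-entropy method.

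However, there is a genuine gap in your sketch at the step where you conclude $g^* \in C_G(U)$. You write that continuity of leafwise measures gives ``$\mu_{x^*,U} = (g^*)_*\mu_{x^*,U}$; this forces $g^*$ to lie in $C_G(U)$''. Neither part of this is correct. What survives in the limit is $\mu_{x^*,U} = \mu_{y^*,U}$ with $y^* = g^*.x^*$; there is no pushforward relation here, and in any case an equation of the form $\mu = g_*\mu$ on a single leafwise measure would not force $g$ to centralize the whole group $U$. In the actual proof, the containment $g^* \in C_G(U)$ is \emph{arranged in advance} by the shearing construction: one analyzes the polynomial map $u \mapsto u g_n u^{-1}$ on the Lie algebra level and shows that, after the appropriate renormalization, the leading (fastest-growing) term of the displacement lies in the centralizer of $U$ --- this is a Lie-algebraic fact about how $\operatorname{ad}_U$ acts, not a measure-theoretic deduction after the fact. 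Only once $g^*$ is known to lie in $C_G(U) \cap G^-_a$ does the preserved identity $\mu_{x^*,U} = \mu_{y^*,U}$ contradict transience. Your sketch has the causality reversed at this crucial point, and the case split you describe (nontrivial $G^+_a$-part, nontrivial $G^0_a$-part, $g_n \in G^-_a \setminus C_G(U)$) does not by itself produce a limit in $C_G(U)$ without this algebraic input.
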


\subsection{On the support of the leafwise measures for joinings}\label{support of joinings}
The results described in \S\ref{product structure} and \S\ref{low-entropy} highlight the central role of the Zariski closure of the support of leafwise measures. For the special case of joinings, the assumption that the measure projects to the uniform measure on each factor gives rise to significant restrictions on the support, as was observed and exploited in a previous paper \cite{Einsiedler-Lindenstrauss-joining}, specifically in \cite[\S3]{Einsiedler-Lindenstrauss-joining}. In that paper, this information was only used in conjunction with the high entropy techniques presented in \S\ref{product structure}; being able to use them with powerful toolbox for analyzing measures invariant under diagonalizable actions, and in particular \S\ref{low entropy theorem} is key to why the results of this paper are less restrictive with regards to the choice of the acting group than that of e.g. \cite{full-torus-paper}.

It would be useful for us to obtain slightly more general formulation of these facts that is applicable e.g. in the context of perfect groups as considered in Theorem~\ref{thm:perfect}, and for this reason (as well as for the convenience of the reader) we reproduce the argument here.

\begin{proposition}[cf.~{\cite[Prop. 3.1]{Einsiedler-Lindenstrauss-joining}}] \label{entropy inequality proposition}
Let $G$ and $\Gamma$ be as above. Let $a \in G$ and $\mu$ be an $a$-invariant measure on $X = \Gamma \backslash G$. Let $N \lhd G$ with $N \cap \Gamma$ a lattice in $N$, $G_1 = G/N$, $\pi:G \to G _ 1$ the natural projection, $a _ 1 = \pi (a)$. Let $\Gamma _ 1 = \pi (\Gamma)$ and set $X_1 = \Gamma_1 \backslash G_1$ and $\mu _ 1 = \pi _*\mu$ (by slight abuse of notation we use $\pi$ to also denote the natural map $X \to X_1$)\footnote{Note that our assumptions imply that $\Gamma _ 1$ is a lattice in $G _ 1$.}. Let $U \leq G_a^{-}$ and $U _ 1 \leq (G_1)_{a _ 1} ^{-}$ be~$a$-normalized (respectively, $a_1$-normalized) Zariski connected unipotent subgroups with $U \leq \pi ^{-1} (U _ 1)$. Then
\begin{equation}\label{relative entropy inequality}
h _ \mu (a, U) \leq h _ {\mu _ 1} (a _ 1, U _ 1) + h _ {\mu} (a, U \cap N)
,\end{equation}
with equality holding for $U=G_a^-$ and $U_1=(G_1)_{a _ 1} ^{-}$.
\end{proposition}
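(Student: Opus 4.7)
The proposition is an Abramov--Rohlin-type decomposition of the entropy contribution $h_\mu(a,U)$ with respect to the factor map $\pi:X\to X_1$, with equality in the maximal case and inequality in general.

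I would begin by choosing compatible generators. Apply Proposition~\ref{generator proposition} to $(X_1,\mu_1,a_1)$ to obtain a finite-entropy countable generator $\cP_1$ such that $\cA_{X_1}:=\bigvee_{n\geq 0}a_1^{-n}\cP_1$ is $(G_1)_{a_1}^-$-subordinate. Next, find a finite-entropy countable partition $\cP'$ of $X$ so that $\cP:=\pi^{-1}\cP_1\vee\cP'$ is a generator for $(X,\mu,a)$ with $\cA_X:=\bigvee_{n\geq 0}a^{-n}\cP$ being $G_a^-$-subordinate. From these, following the construction in \cite[Prop.~7.37]{Pisa-notes}, build the $a$-decreasing $\sigma$-algebras $\cA_{U_1}$, $\cA_U$, $\cA_{U\cap N}$ subordinate to $U_1$, $U$, $U\cap N$ respectively, with atoms $[x]_{\cA_U}=[x]_{\cA_X}\cap U.x$ and analogously for the others; each satisfies properties \ref{A-U 1}--\ref{A-U 3}.

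For the inequality, use~\ref{A-U 3} to write $h_\mu(a,U)=H_\mu(\cP|a^{-1}\cA_U)$ and split $\cP=\pi^{-1}\cP_1\vee\cP'$ via the chain rule:
\[
h_\mu(a,U)=H_\mu(\pi^{-1}\cP_1\mid a^{-1}\cA_U)+H_\mu(\cP'\mid \pi^{-1}\cP_1\vee a^{-1}\cA_U).
\]
Since atoms of $\cA_U$ project under $\pi$ into atoms of $\cA_{U_1}$, one has $\pi^{-1}\cA_{U_1}\subseteq\cA_U$, hence $\pi^{-1}(a_1^{-1}\cA_{U_1})\subseteq a^{-1}\cA_U$, and monotonicity of conditional entropy bounds the first summand by $H_{\mu_1}(\cP_1\mid a_1^{-1}\cA_{U_1})=h_{\mu_1}(a_1,U_1)$. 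For the second summand, parametrize the atoms of $\cA_U$ and $\cA_{U\cap N}$ as $W_y.y$ and $(W_y\cap(U\cap N)).y$ inside $U.y$, respectively, and observe that intersecting a $\pi^{-1}\cP_1$-fiber with a $U$-orbit cuts it along $(U\cap N)$-cosets. This yields the refinement $a^{-1}\cA_{U\cap N}\subseteq \pi^{-1}\cP_1\vee a^{-1}\cA_U$ modulo $\mu$, so the second summand is bounded by $H_\mu(\cP'\mid a^{-1}\cA_{U\cap N})\leq H_\mu(\cP\mid a^{-1}\cA_{U\cap N})=h_\mu(a,U\cap N)$.

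For the equality case $U=G_a^-$, $U_1=(G_1)_{a_1}^-$, use (\ref{entropy contribution of horospherical}) to identify $h_\mu(a,G_a^-)=h_\mu(a)$ and $h_{\mu_1}(a_1,(G_1)_{a_1}^-)=h_{\mu_1}(a_1)$. The Abramov--Rohlin formula $h_\mu(a)=h_{\mu_1}(a_1)+h_\mu(a\mid\pi)$ applies to the factor $\pi$, and the relative entropy $h_\mu(a\mid\pi)$ equals $h_\mu(a,G_a^-\cap N)$ by applying (\ref{entropy contribution of horospherical}) to the dynamics of $a$ along the $N$-fibers, whose stable horospherical is $G_a^-\cap N$. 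The main obstacle is the atom-level matching in the second summand: showing the refinement $a^{-1}\cA_{U\cap N}\subseteq\pi^{-1}\cP_1\vee a^{-1}\cA_U$ modulo $\mu$ requires careful tracking of how atoms of $\cA_U$ decompose in the fiber direction of $\pi$, which is where the compatibility of the generators with both the factor and the foliation geometry enters crucially.
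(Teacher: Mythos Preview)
Your overall strategy is close to the paper's, but the key step where you bound the second summand has a genuine gap: the claimed refinement $a^{-1}\cA_{U\cap N}\subseteq\pi^{-1}\cP_1\vee a^{-1}\cA_U$ modulo $\mu$ is simply false. An atom of $a^{-1}\cA_U$ at $x$ is a piece $V_x.x$ of the $U$-orbit, and intersecting it with the $\pi^{-1}\cP_1$-fiber through $x$ gives $\{u.x: u\in V_x,\ \pi(u).\pi(x)\in[\pi(x)]_{\cP_1}\}$. For this to land inside a $(U\cap N)$-orbit you would need $\pi(u).\pi(x)\in[\pi(x)]_{\cP_1}$ to force $\pi(u)=e$, i.e.\ you would need the $\cP_1$-atom to meet the $U_1$-orbit of $\pi(x)$ in a single point. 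But $\cP_1$ is only a countable partition; its atoms are far from points, so this fails. Concretely, take $G=\SL_2(\R)\times\SL_2(\R)$, $N=\{e\}\times\SL_2(\R)$, $a=(a_0,a_0)$ diagonal, $U=G_a^-$: then the atom of $\pi^{-1}\cP_1\vee a^{-1}\cA_U$ still contains a nontrivial segment in the first horospherical factor, whereas any atom of $a^{-1}\cA_{U\cap N}$ lives entirely in the second factor.

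What is true, and what the paper actually proves, is the corresponding statement with $\cP_1$ replaced by the full Borel $\sigma$-algebra $\mathcal{B}_1$ on $X_1$: the $\sigma$-algebra $\pi^{-1}\mathcal{B}_1\vee\cA_U$ is $a$-decreasing and subordinate to $U\cap N$, hence $H_\mu(\cP\mid a^{-1}(\pi^{-1}\mathcal{B}_1\vee\cA_U))=h_\mu(a,U\cap N)$. To get $\mathcal{B}_1$ into the picture the paper rewrites $h_\mu(a,U)=\tfrac1n H_\mu\bigl(\bigvee_{k=0}^{n-1}a^k\cP\mid a^{-1}\cA_U\bigr)$, splits off $\bigvee_{k=0}^{n-1}a^k\pi^{-1}\cP_1$, and lets $n\to\infty$ using the martingale convergence theorem for entropy so that $\bigvee_{k\geq 0}a^k\pi^{-1}\cP_1$ (together with $a^{-1}\cA_U$) produces $\pi^{-1}\mathcal{B}_1\vee a^{-1}\cA_U$. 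In short, the ``atom-level matching'' you flag as the main obstacle is not a technicality to be overcome but an actual obstruction; the fix is a limiting argument that replaces the single partition $\cP_1$ by the entire Borel $\sigma$-algebra of the factor.
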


\begin{proof}
Since all quantities appearing in \eqref{relative entropy inequality} for general invariant measures are averages of these quantities over their ergodic components we may without loss of generality assume that $\mu$ (and hence $\mu _ 1$) are ergodic.

Let $\mathcal{P}_0$ and $\mathcal{P} _ 1$ be countable finite entropy generating partitions of $X$ and $X_1$ modulo $\mu$ and $\mu _ 1$ respectively so that $\cA_0 = \bigvee_ {n \geq 0} a ^ {- n}  \mathcal{P}_0$
is subordinate to $G _ a ^ -$ and $\cA_1 = \bigvee_ {n \geq 0} a ^ {- n}  \mathcal{P}_1$ is subordinate to $(G _ 1) _ {a _ 1} ^ -$. Setting $\mathcal{P} = \mathcal{P} _ 0 \vee \pi ^{-1} (\mathcal{P} _ 1)$ we obtain a countable partition of $X$ of finite entropy which is certainly generating and is moreover subordinate to $G _ a ^-$ since for every~$x$
\begin{equation*}
[x]_\cA = [x]_{\cA_0} \cap \pi ^{-1} ([\pi (x)] _ {\cA_1})
\end{equation*}
for $\cA = \bigvee_ {n \geq 0} a ^ {- n}  \mathcal{P}$.
Let $\mathcal{A} _ U$ be the $U$-subordinate countably generated $\sigma$-algebra satisfying $\mathcal{A} _ U = \mathcal{P} \vee a ^{-1} \mathcal{A}_U$ as in \S\ref{Entropy contribution}. We shall also make use of the $\sigma$-algebra $(\mathcal{A} _1)_{U_1}$ on~$X_1$ with $(\mathcal{A}_1) _ {U_1} = \mathcal{P}_1 \vee a ^{-1} (\mathcal{A}_1)_{U_1}$. Note that
\[
\mathcal{A}_U \supseteq \pi^{-1}((\mathcal{A} _1)_{U_1}).
\]
Then
\begin{multline*}
h _ \mu (a, U) = H _ \mu (\mathcal{A} _ U \cond a ^{-1} \mathcal{A} _ U) =
H _ \mu (\mathcal{P} \vee a ^{-1}  \mathcal{A} _ U \cond a ^{-1}  \mathcal{A} _ U) \\ = \frac {1 }{ n} H _ \mu \left(\bigvee_ {k = 0}^{n-1} a^{k}  \mathcal{P} \vee a ^{-1}  \mathcal{A} _ U \cond a ^{-1}  \mathcal{A} _ U\right)
=\frac {1 }{ n} H _ \mu \left(\bigvee_ {k = 0}^{n-1} a^{k}  \mathcal{P} \cond a ^{-1}  \mathcal{A} _ U\right)
.\end{multline*}
We now write
\begin{multline*}
\frac 1n H _ \mu \left(\bigvee_ {k = 0}^{n-1} a^{k}  \mathcal{P} \cond a ^{-1}  \mathcal{A} _ U\right) = \frac 1n H _ \mu \left(\bigvee_ {k = 0}^{n-1} a^{k} \pi ^{-1} (\mathcal{P} _ 1) \cond a ^{-1}  \mathcal{A} _ U\right) + \ \ \  \\
+ \frac 1n H _ \mu \left(\bigvee_ {k = 0}^{n-1} a^{k} \mathcal{P}  \cond \bigvee_ {k = 0}^{n-1} a^{k} \pi ^{-1} (\mathcal{P} _ 1) \vee a ^{-1} \mathcal{A} _ U\right)
.\end{multline*}
 Regarding the first term, we have
\begin{multline}\label{eq:first entropy}
\frac 1n H _ \mu \left(\bigvee_ {k = 0}^{n-1} a^{k} \pi ^{-1} (\mathcal{P} _ 1)\cond a ^{-1}  \mathcal{A} _ U\right)
\leq
\frac 1n H _ \mu \left(\bigvee_ {k = 0}^{n-1} a^{k} \pi ^{-1} (\mathcal{P} _ 1) \cond a ^{-1}  \pi ^{-1} \bigl((\mathcal{A}_1) _ {U_1}\bigr)\right)\\
= \frac 1n H _ {\mu_1} \left(\bigvee_ {k = 0}^{n-1} a_1^{k} \mathcal{P} _ 1 \cond a_1 ^{-1}  (\mathcal{A}_1) _ {U_1}\right)
= h_{\mu_1}(a_1, U_1).
\end{multline}
Regarding the second term, 
\begin{multline*}
\frac 1n H _ \mu \left(\bigvee_ {k = 0}^{n-1} a^{k} \mathcal{P}  \cond \bigvee_ {k = 0}^{n-1} a^{k} \pi ^{-1} (\mathcal{P} _ 1) \vee a ^{-1} \mathcal{A} _ U\right) =\\
\begin{aligned}
\qquad\qquad&=
\frac 1n \sum_{k=0}^{n-1} H _ \mu \left( a^{k} \mathcal{P}  \cond  \bigvee_ {l = k}^{n-1} a^{l} \pi ^{-1} (\mathcal{P} _ 1) \vee a ^{k-1} \mathcal{A} _ U\right)
\\
&= \frac 1n \sum_{k=0}^{n-1} H _ \mu \left(\mathcal{P}  \cond \bigvee_ {l = 0}^{n-1-k} a^{l} \pi ^{-1} (\mathcal{P} _ 1) \vee a ^{-1} \mathcal{A} _ U\right)
\end{aligned}
\end{multline*}
By the martingale convergence theorem for entropy \cite[Ch.~2, Thm.~6]{Parry-topics}, as $n \to \infty$
\begin{align*}
H _ \mu \left(\mathcal{P}  \cond \bigvee_ {l = 0}^{n} a^{l} \pi ^{-1} (\mathcal{P} _ 1) \vee a ^{-1} \mathcal{A} _ U\right) &\to H _ \mu \left(\mathcal{P}  \cond \bigvee_ {l = 0}^{\infty} a^{l} \pi ^{-1} (\mathcal{P} _ 1) \vee a ^{-1} \mathcal{A} _ U\right)\\
&=
H _ \mu \left(\mathcal{P}  \cond \pi ^{-1} \left(\bigvee_ {l = -\infty}^{\infty} a_1^{l} \mathcal{P} _ 1\right) \vee a ^{-1} \mathcal{A} _ U\right).
\end{align*}
Let $\mathcal B_1 = \bigvee_ {l = -\infty}^{\infty} a_1^{l} \mathcal{P} _ 1$ which (since $\cP_1$ is generating) is equivalent mod $\mu_1$ to the Borel $\sigma$-algebra on $X_1$. Then $\pi^{-1} \mathcal B_1 \vee \mathcal{A} _ U$ is an $a$-decreasing $\sigma$-algebra subordinate to $U \cap N$ hence
\begin{align*}
H _ \mu \left(\mathcal{P}  \cond \pi ^{-1} \mathcal B_1 \vee a ^{-1} \mathcal{A} _ U\right) &= H _ \mu \left(\pi^{-1} \mathcal B_1 \vee \mathcal{A} _ U  \cond a^{-1} \left(\pi^{-1} \mathcal B_1 \vee \mathcal{A} _ U\right)\right)\\
&= h_\mu(a, U\cap N)
\end{align*}
hence
\begin{equation}\label{eq:second entropy}
\frac 1n H _ \mu \left(\bigvee_ {k = 0}^{n-1} a^{k} \mathcal{P}  \cond \bigvee_ {k = 0}^{n-1} a^{k} \pi ^{-1} (\mathcal{P} _ 1) \vee a ^{-1} \mathcal{A} _ U\right) \to h_\mu(a, U \cap N) \qquad\text {as $n \to \infty$}
.\end{equation}
The estimates \eqref{eq:first entropy} and \eqref{eq:second entropy} together imply \eqref{relative entropy inequality}.

When $U = G^{-} _ a$ and $U _ 1 = (G _ 1) ^ - _ {a _ 1}$ equation \eqref{relative entropy inequality} (with $=$ instead of $\leq$) is essentially the Abramov-Rohklin conditional entropy formula; since \eqref{relative entropy inequality} has already been established, to establish equality (even without referring to Abramov-Rohklin formula) it  only remains to show that $h _ \mu (a) \geq h _ {\mu _ 1} (a _ 1) + h _ {\mu} (a, G^-_a \cap N)$ which can be established in a very similar way to the above argument starting from the defining formula for entropy
\begin{equation*}
h _ \mu (a) = \lim_ {n \to \infty} \frac 1n H _ \mu \left(\bigvee_ {k = 0} ^ {n - 1} a ^ k \mathcal{P}\right).
\end{equation*}
This and the Abramov-Rohklin conditional entropy formula $h _ \mu (a) = h _ {\mu _ 1} (a _ 1) + h _ {\mu} (a |X_1)$ can be used to show that $h _ {\mu} (a, G^-_a \cap N)=h _ {\mu} (a |X_1)$. Alternatively one could show $h _ {\mu} (a, G^-_a \cap N)=h _ {\mu} (a |X_1)$ directly by employing an appropriate $a$-decreasing $\sigma$-algebra to evaluate the left-hand side and use the Abramov-Rohklin conditional entropy formula to establish equality holds in \eqref{relative entropy inequality} for $U=G_a^-$. We leave the details to the imagination of the reader (or cf.~\cite[Prop.~3.3]{Einsiedler-Lindenstrauss-joining}).
\end{proof}

\begin{proposition}[cf.~{\cite[Coro.~3.4]{Einsiedler-Lindenstrauss-joining}}]\label{equality for coarse Lyapunov proposition}
Let $G, \Gamma, X=\Gamma \backslash G, N, G _ 1, \Gamma _ 1, X_1=\Gamma_1 \backslash G_1$ and $\pi$ be as in Proposition~\ref{entropy inequality proposition}. Let $a < G$ be a class-$\mathcal{A}'$ homomorphism $\Z ^ d \to G$, and set $A = a(\Z^d)$ and $A_1 = \pi \circ a (\Z ^ d)$. Let $\mu$ be an $A$-invariant and ergodic measure on $X$ and $\mu _ 1 = \pi _ {*} \mu$ the corresponding invariant measure on $X _ 1$.
Then for any coarse Lyapunov weight $[\alpha]$
\begin{equation*}
h _ \mu (a, G^{[\alpha]}) = h _ {\mu _ 1} (a _ 1, (G _ 1)^{[\alpha]}) + h _ {\mu} (a, G^{[\alpha]} \cap N)
.\end{equation*}
\end{proposition}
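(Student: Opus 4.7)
The plan is to derive the per-weight equality by combining three ingredients: the inequality of Proposition~\ref{entropy inequality proposition} applied to each coarse Lyapunov subgroup individually, the equality case of the same proposition for the full horospherical subgroup $G_a^-$, and the additivity of entropy contributions across coarse Lyapunov weights from \eqref{Sum formula of U entropy}.

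For the fixed coarse Lyapunov weight $[\alpha]$, I would first choose an element $a \in A$ which is generic in the sense that $\beta(a) < 0$ for every $\beta \in [\alpha]$ and $\gamma(a) \neq 0$ for every nontrivial Lyapunov weight $\gamma$. Such an $a$ exists because in the class-$\cA'$ setting the finitely many Lyapunov weights are $\R$-linear functionals on $\Z^d$, and the finitely many walls $\{\gamma = 0\}$ cannot cover the lattice points of the open cone $\{\alpha < 0\}$ in $A \cong \Z^d$. With this choice, $G_a^- = \prod_{[\gamma]:\gamma(a) < 0} G^{[\gamma]}$; and since $N$ is normal in $G$, its Lie algebra $\mathfrak n$ is $A$-invariant and thus decomposes into weight spaces. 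Consequently $\pi(G^{[\gamma]}) \subseteq (G_1)^{[\gamma]}$ for every $[\gamma]$, and $G_a^- \cap N = \exp\bigl(\bigoplus_{[\gamma]: \gamma(a)<0}(\mathfrak g^{[\gamma]} \cap \mathfrak n)\bigr)$ is an $A$-normalized Zariski connected unipotent subgroup of $G_a^-$, so that Theorem~\ref{EKtheorem} and the sum formula \eqref{Sum formula of U entropy} apply to it as well.

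I would then apply Proposition~\ref{entropy inequality proposition} to each pair $U = G^{[\gamma]}$, $U_1 = (G_1)^{[\gamma]}$ with $\gamma(a) < 0$, obtaining the per-weight bound
\begin{equation*}
h_\mu(a, G^{[\gamma]}) \leq h_{\mu_1}(a_1, (G_1)^{[\gamma]}) + h_\mu(a, G^{[\gamma]} \cap N).
\end{equation*}
Summing over all such $[\gamma]$ and invoking \eqref{Sum formula of U entropy} three times (for $G_a^-$ on $(X, \mu)$, for $(G_1)_{a_1}^-$ on $(X_1, \mu_1)$, and for $G_a^- \cap N$ on $(X, \mu)$) yields
\begin{equation*}
h_\mu(a, G_a^-) \leq h_{\mu_1}(a_1, (G_1)_{a_1}^-) + h_\mu(a, G_a^- \cap N).
\end{equation*}
But by the equality clause of Proposition~\ref{entropy inequality proposition} this aggregate bound is in fact an equality, which forces every per-weight inequality above to be an equality as well. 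Specializing to $[\gamma] = [\alpha]$ gives the statement of the proposition.

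The argument is essentially bookkeeping; the only mildly subtle point is verifying that $G_a^- \cap N$ fits the hypotheses of Theorem~\ref{EKtheorem} so that the sum formula applies to it, which reduces to the $A$-invariance of $\mathfrak n$ and standard properties of the exponential map on unipotent Lie algebras.
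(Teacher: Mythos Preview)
Your proof is correct and essentially identical to the paper's: both apply Proposition~\ref{entropy inequality proposition} per coarse Lyapunov weight, sum via \eqref{Sum formula of U entropy}, and use the horospherical equality case to force each summand inequality to be an equality. Your additional care in choosing $a$ generic and in justifying the applicability of \eqref{Sum formula of U entropy} to $G_a^- \cap N$ is fine but not strictly needed---the paper simply takes any $a$ with $G^{[\alpha]} < G_a^-$ and leaves the decomposition of the $G_a^- \cap N$ term implicit.
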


\begin{proof}
Fix $a \in A$ with $G ^ {[\alpha]} < G _ a ^ -$, and let $[\alpha _ 1]=[\alpha], \dots, [\alpha _ k]$ be all the coarse Lyapunov weights contracted by $a$, i.e. so that $G _ a ^ - = \prod_ {i = 1} ^ k G ^ {[\alpha _ i]}$. Then by Proposition~\ref{entropy inequality proposition}, for every $i=1$, \dots, $k$
\begin{equation} \label{equation for each coarse Lyapunov}
h _ \mu (a, G^{[\alpha _ i]}) \leq h _ {\mu _ 1} (a _ 1, (G _ 1)^{[\alpha _ i]}) + h _ {\mu} (a, G^{[\alpha _ i]} \cap N)
\end{equation}
while for $G _ a ^ -$ we have the \emph{equality}
\begin{equation} \label{equation for all of them together}
h _ \mu (a, G^{-}_a) = h _ {\mu _ 1} (a _ 1, (G _ 1)^{-}_{a_1}) + h _ {\mu} (a, G^{-}_a \cap N)
.\end{equation}
However, by \eqref{Sum formula of U entropy} each term of \eqref{equation for all of them together} is the sum of the corresponding term in \eqref{equation for each coarse Lyapunov} over $i = 1, \dots,k$. Thus equality holds in \eqref{equation for each coarse Lyapunov} for all $i$, in particular for $i = 1$.
\end{proof}

Finally, we are in position to arrive at what we sought after: restrictions on the support of $\mu ^ {[\alpha]} _ x$ arising from the fact that $\mu$ is a joining, or more generally that $\pi _ {*} \mu$ is the uniform measure on $X _ 1$.

\begin{corollary} [cf.~{\cite[Coro.~3.5]{Einsiedler-Lindenstrauss-joining}}]\label{large P corollary}
Assume in addition to the assumptions of Proposition~\ref{equality for coarse Lyapunov proposition} that $\pi _*\mu = m _ {X _ 1}$ (i.e.~to the appropriate uniform measure). Let $[\alpha]$ be a course Lyapunov weights, and let $P  ^{[\alpha]}$ be the group $P_U$ given by Lemma~\ref{cor-eigenvalue} for $U= G ^ {[\alpha]}$ and $\mu$. Then $\pi (P  ^{[\alpha]}) = (G _ 1) ^ {[\alpha]}$.
\end{corollary}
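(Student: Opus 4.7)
The approach is to squeeze $h_\mu(a,G^{[\alpha]})$ between an \emph{equality} coming from the hypothesis $\pi_*\mu=m_{X_1}$ (via Proposition~\ref{equality for coarse Lyapunov proposition}) and an \emph{inequality} coming from the fact that $\mu_{x,G^{[\alpha]}}$ is concentrated on $P^{[\alpha]}$ (via Proposition~\ref{entropy inequality proposition} applied to the pair $(P^{[\alpha]},\pi(P^{[\alpha]}))$), and then to deduce that the corresponding modules match. First I would fix some $a\in A$ with $\alpha(a)<0$; since every $\alpha'\in[\alpha]$ is a positive scalar multiple of $\alpha$, this ensures $G^{[\alpha]}\subset G_a^-$ and that each $\alpha'\in[\alpha]$ contributes strictly positively to the module formula~\eqref{module bound} on any $a$-normalized subgroup of $G^{[\alpha]}$. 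By Lemma~\ref{cor-eigenvalue} the group $P^{[\alpha]}$ is $A$-normalized, Zariski connected, and unipotent, so its image $\pi(P^{[\alpha]})$ is an $A_1$-normalized Zariski connected unipotent subgroup of $(G_1)^{[\alpha]}\subseteq (G_1)_{a_1}^-$.

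Since $\mu_1=m_{X_1}$ is right-invariant under every closed subgroup $V\leq G_1$, its leafwise measures on $V$ are Haar by property~(7) of \S\ref{basic properties}, and hence $h_{\mu_1}(a_1,V)=\md(a_1,V)$ for every $a_1$-normalized Zariski connected unipotent $V\subseteq (G_1)_{a_1}^-$. Combined with Proposition~\ref{equality for coarse Lyapunov proposition} this gives the equality
\[
h_\mu(a,G^{[\alpha]})=\md(a_1,(G_1)^{[\alpha]})+h_\mu(a,G^{[\alpha]}\cap N).
\]
On the other hand, since $\mu_{x,G^{[\alpha]}}$ is supported on $P^{[\alpha]}$, Lemma~\ref{lemma-1-faithful} gives $h_\mu(a,G^{[\alpha]})=h_\mu(a,P^{[\alpha]})$; then Proposition~\ref{entropy inequality proposition} applied with $U=P^{[\alpha]}$ and $U_1=\pi(P^{[\alpha]})$, together with the same remark about $\mu_1$, yields
\[
h_\mu(a,G^{[\alpha]})\leq \md(a_1,\pi(P^{[\alpha]}))+h_\mu(a,P^{[\alpha]}\cap N).
\]

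Since $P^{[\alpha]}\cap N\subseteq G^{[\alpha]}\cap N$, Lemma~\ref{lemma-1-faithful} also gives $h_\mu(a,P^{[\alpha]}\cap N)\leq h_\mu(a,G^{[\alpha]}\cap N)$, and cancelling the common term between the two displays leaves
\[
\md(a_1,(G_1)^{[\alpha]})\leq \md(a_1,\pi(P^{[\alpha]})).
\]
Since $\pi(P^{[\alpha]})\subseteq (G_1)^{[\alpha]}$ and our choice of $a$ makes every coefficient $|\alpha'(a)|$ in the module formula~\eqref{module bound} strictly positive, this forces equality of dimensions weight-space by weight-space and hence $\pi(P^{[\alpha]})=(G_1)^{[\alpha]}$ as Zariski connected unipotent subgroups.

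The main subtlety is choosing to feed Proposition~\ref{entropy inequality proposition} with $(P^{[\alpha]},\pi(P^{[\alpha]}))$ rather than the coarse Lyapunov pair $(G^{[\alpha]},(G_1)^{[\alpha]})$: for the latter pair Proposition~\ref{entropy inequality proposition} is already an equality and would merely reproduce the tautology from Proposition~\ref{equality for coarse Lyapunov proposition}, whereas for the former the bound involves the possibly smaller quantity $\md(a_1,\pi(P^{[\alpha]}))$, which is exactly what yields a contradiction unless $\pi(P^{[\alpha]})$ is as large as possible.
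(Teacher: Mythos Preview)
Your proof is correct and follows essentially the same route as the paper: both combine the equality from Proposition~\ref{equality for coarse Lyapunov proposition} with the inequality from Proposition~\ref{entropy inequality proposition} applied to the pair $(P^{[\alpha]},\pi(P^{[\alpha]}))$, then cancel the $N$-term and use strict monotonicity of the entropy contribution (equivalently, of $\md$) for the uniform measure. The paper phrases it as a proof by contradiction and keeps the quantity $h_{m_1}(a_1,\cdot)$ rather than rewriting it as $\md(a_1,\cdot)$, but this is cosmetic. One small slip: the property you invoke for ``leafwise measures are Haar when $\mu_1$ is invariant'' is property~(6) of \S\ref{basic properties}, not (7).
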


\begin{proof}
Suppose in contradiction that $\pi (P ^ {[\alpha]}) \neq (G _ 1) ^ {[\alpha]}$. Then on the one hand by Proposition~\ref{equality for coarse Lyapunov proposition}
\[
h _ \mu (a, G^{[\alpha]}) = h _ {m _ 1} (a _ 1, (G _ 1)^{[\alpha]}) + h _ {\mu} (a, G^{[\alpha]} \cap N)
\]
with $m_1=m_{X_1}$ the uniform measure on $X _ 1$. On the other hand since $\mu _ {x, G^{[\alpha]}}$ is supported on $P ^ {[\alpha]}$ almost everywhere, $h _ \mu (a, G^{[\alpha]})=h _ \mu (a, P^ {[\alpha]})$, whereby applying Proposition~\ref{entropy inequality proposition} we get
\begin{align*}
h _ \mu (a, G^{[\alpha]})&=h _ \mu (a, P^ {[\alpha]}) \leq h _ {m _ 1} (a _ 1, \pi (P^ {[\alpha]})) + h _ {\mu} (a, (NP^ {[\alpha]}) \cap N) \\
& \leq
h _ {m _ 1} (a _ 1, \pi (P^ {[\alpha]})) + h _ {\mu} (a, G^{[\alpha]} \cap N)
.\end{align*}
This gives a contradiction as clearly $h _ {m _ 1} (a _ 1, (G _ 1)^{[\alpha]})> h _ {m _ 1} (a _ 1, \pi (P^ {[\alpha]}))$.
\end{proof}

\section{Classification of ergodic joinings --- proof of Theorem~\ref{higher rank}}\label{proof-higher-rank}

The main step in the proof of Theorem~\ref{higher rank} is the following proposition. Once this proposition is established, Theorem~\ref{higher rank} can be proved by induction on $r$ using the deep classification results for unipotent groups, cf.~\S\ref{unipotent measure rigidity}.

\begin{proposition}\label{unipotent invariance proposition}
Let $\mu$ be an $A$-invariant and ergodic joining on $X = \prod_ i X _ i$ as in Theorem~\ref{higher rank}. Then $\mu$ is invariant under an $A$-normalized Zariski closed and Zariski connected unipotent subgroup $U$.
\end{proposition}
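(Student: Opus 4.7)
The goal is to produce a nontrivial $A$-normalized Zariski closed and Zariski connected unipotent subgroup under which $\mu$ is invariant. The plan is to combine the support constraints on leafwise measures afforded by the joining hypothesis (Corollary~\ref{large P corollary}) with the two available mechanisms for generating unipotent invariance: the high entropy method (Theorem~\ref{high entropy}) and the low entropy method (Theorem~\ref{low entropy theorem}).

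First I would fix $a \in A$ with $G_a^- \neq \{e\}$; such an element exists since each $a_i$ is a proper homomorphism into a factor that is saturated by unipotents. For each coarse Lyapunov weight $[\alpha]$ of the $A$-action on $G = \prod_i G_i$, Lemma~\ref{cor-eigenvalue} furnishes an $A$-normalized Zariski connected subgroup $P^{[\alpha]} \leq G^{[\alpha]}$ whose points support $\mu_x^{[\alpha]}$ almost surely, while Proposition~\ref{equality for coarse Lyapunov proposition} and Corollary~\ref{large P corollary} together force $P^{[\alpha]}$ to project surjectively onto each coarse Lyapunov factor $(G_i)^{[\alpha]}$.

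Next I would split into two cases, according to whether any two distinct factors $\GG_i, \GG_j$ share a coarse Lyapunov weight of the product $A$-action. If no two factors share a weight, then every $G^{[\alpha]}$ is contained in a single $G_i$ and $P^{[\alpha]} = (G_i)^{[\alpha]}$; then the entropy sum formula~\eqref{Sum formula of U entropy}, Proposition~\ref{equality for coarse Lyapunov proposition}, and Lemma~\ref{lemma-1-faithful} should combine to force $\mu_x^{[\alpha]}$ to be Haar on the whole of $G^{[\alpha]}$, whence by property~(6) of \S\ref{basic properties} the measure $\mu$ is $G^{[\alpha]}$-invariant and we are done. Otherwise, fix $[\alpha]$ shared by at least two factors and take $U = P^{[\alpha]}$; by Remark~\ref{faithful remark}, $\mu$ is faithfully $U$-recurrent, enabling the low entropy theorem.

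To apply Theorem~\ref{low entropy theorem} I need to check its hypothesis that conjugation by $U$ moves every nontrivial $g \in G_a^+$ outside $G_a^+ G_a^0$; this should follow from the bracket relations~\eqref{weights add} combined with the surjectivity $U \to (G_i)^{[\alpha]}$ and the semisimple root structure of the $\GG_i$. The two alternatives in the conclusion then yield either non-transience under $C_G(U) \cap G_a^-$ or local alignment under $C_G(U) \cap C_G(a)$; the main obstacle, which I anticipate being the most delicate step, is converting these structural statements into genuine unipotent invariance of $\mu$ via the class-$\mathcal{A}'$ assumption and Poincar\'e recurrence. Should the low entropy route stall on degenerate configurations --- such as $P^{[\alpha]}$ being a skew diagonal in $\prod_i (G_i)^{[\alpha]}$ --- I would instead invoke the high entropy method (Theorem~\ref{high entropy}) on two shared coarse Lyapunov weights $[\alpha], [\beta]$ whose single-factor contributions bracket non-trivially into $\mathfrak g^{[\alpha+\beta]}$, yielding the required unipotent invariance through its conclusion~(4).
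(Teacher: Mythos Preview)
Your approach diverges substantially from the paper's, and while the ``no shared weight'' case is essentially correct (modulo citing the right fact: you need that $h_\mu(a,G^{[\alpha]})=\md(a,G^{[\alpha]})$ forces $\mu_x^{[\alpha]}$ to be Haar, which is standard but is not what Lemma~\ref{lemma-1-faithful} says), the ``shared weight'' case has a genuine gap.

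The problem is your choice $U=P^{[\alpha]}$ for the low entropy theorem. The hypothesis of Theorem~\ref{low entropy theorem} demands that for \emph{every} nontrivial $g\in G_a^+$ some $u\in U$ satisfies $ugu^{-1}\notin G_a^+G_a^0$. A single coarse Lyapunov group is far too small for this: if $g\in G_a^+$ lies in a root group whose bracket with $G^{[\alpha]}$ is trivial (which happens generically once $\GG_i$ has rank $\geq 2$, or even in rank one for $g$ in a factor where $[\alpha]$ does not appear), then $ugu^{-1}=g$ for all $u\in P^{[\alpha]}$ and the hypothesis fails outright. The paper applies the theorem with $U=P^-=\prod_{[\beta]:\beta(a)<0}P^{[\beta]}$, which projects onto the \emph{full} horospherical $(G_i)_{a_i}^-$ in every factor, and then invokes Lemma~\ref{shearing} to verify the hypothesis; that surjectivity onto the full horospherical is exactly what makes the shearing argument go through.

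More fundamentally, you are missing the key organizing idea. The paper does not split on shared weights; it first arranges (possibly after averaging over $\bar A/A$ in the real case) that there is a coarse weight $[\alpha_0]$ and an $a\in A$ with $\alpha_0(a)=0$, so that $P^{[\alpha_0]}$ is neither expanded nor contracted by $a$. Then either $P^{[\alpha_0]}$ fails to commute with one of $P^\pm$, in which case Theorem~\ref{high entropy} gives invariance under $[P^{[\alpha_0]},P^\pm]$, or it commutes with both. In the latter case one applies Theorem~\ref{low entropy theorem} with $U=P^-$ and $U=P^+$; the non-transient alternative yields unipotent invariance by a standard argument, and the aligned alternative for \emph{both} $P^-$ and $P^+$ is ruled out by a contradiction: faithful $P^{[\alpha_0]}$-recurrence (Proposition~\ref{prop-faithful}) produces large displacements along $P^{[\alpha_0]}$ that preserve both $\mu_{x,P^-}$ and $\mu_{x,P^+}$ by~\eqref{mu product corollary}, but alignment then forces the corresponding lattice elements to centralize the full Auslander ideal, which by Lemma~\ref{simple one} makes them central---impossible since they go to infinity. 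This interplay between a ``neutral'' direction $P^{[\alpha_0]}$ and the two horosphericals $P^\pm$ is the heart of the argument, and it is absent from your outline.
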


We shall prove this proposition in \S\ref{proof of invariance} below; the inductive argument going from Proposition~\ref{unipotent invariance proposition} to Theorem~\ref{higher rank} will be given in \S\ref{inductive section}.

\subsection{Proof of Proposition~\ref{unipotent invariance proposition} under a mild assumption}\label{proof of invariance}

In \S\ref{fine structure} we considered the groups $P ^ {[\alpha]} \leq G ^ {[\alpha]} $ (with $[\alpha]$ a coarse Lyapunov weight) which were defined as the smallest Zariski connected closed subgroups of $G ^ {[\alpha]}$ containing the support of the leafwise measures $\mu _ x ^ {[\alpha]}$. For a fixed $[\alpha]$, by Lemma~\ref{cor-eigenvalue} the smallest such group for  $\mu _ x ^ {[\alpha]}$ is independent of~$x$ outside a set of $\mu$-measure zero, and this group is normalized by $A$.
These groups would be
key players in the proof of Proposition~\ref{unipotent invariance proposition} and what would be crucial for us is that under the joining assumption we know that they are quite large: specifically, Corollary~\ref{large P corollary} implies that if $\pi _ i$ denotes the natural projection $G \to G _ i $ (in the notations of Theorem~\ref{higher rank})
\begin{equation}\label{big projection reference}
\pi_i (P ^ {[\alpha]}) =  G_i ^{[\alpha]}
\end{equation}
for $i=1, \dots, r$.

To better present the crux of the matter, we initially prove Proposition~\ref{unipotent invariance proposition} under the additional assumption:
\begin{equation}\label{assumption for proposition}
\text {\emph{there is some $[\alpha _ 0]\in[\Phi]$ and $a \in A$ with $\alpha _ 0 (a) = 0$}}
\end{equation}
where we recall that $[\Phi]$ denotes the set of all (non-zero) coarse Lyapunov weights.

The general case can be reduced to this using a fairly straightforward averaging argument similar to the one employed in \S\ref{construction}; we present this reduction in \S\ref{removing an assumption} below.
Set~$P^-=\prod_{[\alpha]:\alpha(a)<0}P^{[\alpha]}$. By Theorem~\ref{EKtheorem}, the group~$P^-$ is for a.e.~$x$ the smallest
Zariski connected unipotent subgroup of~${G}_a^-$ that contains~$\supp\mu_x^{{G}_a^-}$. This implies that~$\mu_x^{P^-}=\mu_x^{{G}_a^-}$ a.e., or more precisely that for any Borel subset $F \subset {G}_a^-$
\[
\mu_x^{P^-} (F \cap P^-) =\mu_x^{{G}_a^-}(F).
\]
Moreover, by definition, the measure~$\mu$ is faithfully~$P^-$-recurrent. Define similarly $P^+=\prod_{[\alpha]:\alpha(a)>0}P^{[\alpha]}$.

There are now two cases to consider: the ``high entropy'' case, when $P ^ {[\alpha _ 0]}$ does not commute with at least one of the groups $P ^ -$ and~$P^+$, and the ``low entropy'' case when they do.

In the ``high entropy'' case, we find a $a ' \in A$ which contracts both $G ^ {[\alpha _ 0]}$ and $P ^ -$, and a direct application of Theorem~\ref{high entropy} shows that $\mu$ is invariant under $[P ^ {[\alpha _ 0]}, P ^ -]$ --- a Zariski close, Zariski connected $A$-normalized unipotent group, establishing the proposition.

Hence \emph{we need only consider the case when both~$P ^ -$ and~$P^+$ commute with~$P ^ {[\alpha _ 0]}$}. We would like to employ Theorem~\ref{low entropy theorem} for $\mu$ with respect to the already chosen $a \in A$ and with $P^-$ (and similarly also $P^+$) playing the role of $U$.

We need first to verify that $P^-$ satifies the conditions imposed on by Theorem~\ref{low entropy theorem}, namely that for all nontrivial $g \in G ^ +_a$ there exists some $ u \in P^-$ with $u g u ^{- 1} \not \in G ^ + _aG ^ 0_a$.
In order to show this we use the following fact about orbits of unstable horospheric groups in algebraic representations, which will also be used later in the paper, and which we state in somewhat greater generality than is needed for the purposes of this proof.

\begin{lemma}\label{shearing}
 Suppose $\G$ is a Zariski connected algebraic group defined over~$\Q_\sigma$
 whose radical equals its unipotent radical, and $a \in \G(\Q_\sigma)$ an element of class-$\mathcal A'$. 
 Let~$\rho:\G\to\operatorname{Aut}(V)$ be an algebraic representation of $G$ on a $\Q_\sigma$-vector space. Let $V^0_a$ be the eigenspace for eigenvalue 1 of~$\rho(a)$, and $V_a^-$ and~$V_a^-$ the sum of the eigenspaces of $\rho(a)$ with eigenvalues of absolute value $<1$ (resp.\ the sum of eigenspaces of $\rho(a)$ with eigenvalues of absolute value $>1$). Then for any $w \in V_a^+$ there is an element  $u \in G_a^-$ so that  $\rho(u).w\notin  V_a^++V^0_a$. In particular,
for every nonzero $w\in\mathfrak g_a^+$ there exists some 
 $u\in G_a^-$ with $\Ad_u(w)\notin \mathfrak g_a^++\mathfrak g^0$.
\end{lemma}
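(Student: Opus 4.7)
The plan is a proof by contradiction that exploits the absence of nontrivial algebraic characters on $\G$. Suppose $w \in V_a^+$ is nonzero with $\rho(u).w \in V_a^+ + V_a^0$ for every $u \in G_a^-$, and introduce the subspace
\[
Z := \{v \in V : \rho(u).v \in V_a^+ + V_a^0 \text{ for all } u \in G_a^-\}.
\]
Taking $u = e$ shows $Z \subseteq V_a^+ + V_a^0$, and $Z$ is automatically stabilised by $\rho(G_a^-)$ since $G_a^-$ is a group. My goal is to show that $Z$ is in fact $\rho(\G)$-stable; then the $\G$-submodule $V_w := \operatorname{span}(\rho(\G).w)$ lies in $V_a^+ + V_a^0$, and a character argument will produce the contradiction.

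Let $P^+ \leq \G$ be the subgroup with Lie algebra $\mathfrak g^0 + \mathfrak g_a^+$. The subspace $V_a^+ + V_a^0$ is $\rho(P^+)$-invariant because $\mathfrak g^0$ preserves each $\rho(a)$-eigenspace while $\rho_*(\mathfrak g_a^+)$ strictly raises log-weights. The vector-space decomposition $\mathfrak g = \mathfrak g_a^- \oplus (\mathfrak g^0 + \mathfrak g_a^+)$ makes the differential at $(e,e)$ of each of the multiplication maps $G_a^- \times P^+ \to \G$ and $P^+ \times G_a^- \to \G$ into the addition isomorphism, so both $G_a^- \cdot P^+$ and $P^+ \cdot G_a^-$ are Zariski open and dense in the connected group~$\G$. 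Given $w \in Z$ and $p \in P^+$, consider the morphism $\Phi: (u, p') \mapsto \rho(up').w$ on $G_a^- \times P^+$: on the Zariski open dense set where $up' \in P^+ G_a^-$, writing $up' = p'' u''$ with $p'' \in P^+$ and $u'' \in G_a^-$ yields $\Phi(u, p') = \rho(p'')\rho(u'').w \in \rho(P^+)(V_a^+ + V_a^0) \subseteq V_a^+ + V_a^0$. As $\Phi^{-1}(V_a^+ + V_a^0)$ is Zariski closed and contains this dense open subset, it equals all of $G_a^- \times P^+$; specialising to $p' = p$ gives $\rho(p).w \in Z$. Hence $\operatorname{Stab}_\G(Z)$ is a Zariski closed subgroup containing the dense set $G_a^- \cdot P^+$, so equals $\G$, and $V_w \subseteq Z \subseteq V_a^+ + V_a^0$.

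Finally I would invoke the radical hypothesis. Since $R(\G) = R_u(\G)$, the quotient $\G/R_u(\G)$ is semisimple and hence perfect, so $[\G,\G]\cdot R_u(\G) = \G$; therefore $\G/[\G,\G]$ is a connected abelian quotient of the unipotent group $R_u(\G)$, isomorphic to some $\GG_a^k$ in characteristic zero, which admits no nontrivial character to~$\GG_m$. Consequently the algebraic character $\det \rho|_{V_w}: \G \to \GG_m$ is trivial; evaluating at $a$ and applying $|\cdot|_\sigma$ forces the weighted sum of $\log |\mu|_\sigma$ over the eigenvalues $\mu$ of $\rho(a)|_{V_w}$ to vanish. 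Because $V_w \subseteq V_a^+ + V_a^0$, every such $|\mu|_\sigma \geq 1$, so the vanishing sum forces $|\mu|_\sigma = 1$ throughout; by the $\Q_\sigma$-diagonalisability of $\rho(a)$ (from the class-$\mathcal A'$ assumption) this means $V_w \subseteq V_a^0$, contradicting $0 \neq w \in V_w \cap V_a^+$. The ``in particular'' clause follows by specialising to $\rho = \Ad$ and $V = \mathfrak g$. The step needing the most care is the $P^+$-invariance of $Z$, which rests on the Bruhat-type density of $P^+ G_a^-$ in $\G$; this density holds in the non-archimedean setting as well, since it reduces to the smoothness/dimension argument above without appeal to any analytic inverse function theorem.
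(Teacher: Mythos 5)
Your proof is correct and takes essentially the same route as the paper: both arguments use the Zariski density of a Bruhat-type product (your $P^+\cdot G_a^-$ versus the paper's $G_a^+G_a^0G_a^-$) to show that $\rho(\G).w$, and hence the $\G$-submodule it spans, lies inside $V_a^+ + V_a^0$, and both then derive a contradiction by producing a nontrivial algebraic character of $\G$ via the determinant of $\rho$ on that submodule, using the hypothesis $R(\G)=R_u(\G)$ to rule out such characters. The only difference is expository: the paper moves directly from $\rho(G_a^+G_a^0)\rho(G_a^-).w\subset V_a^+ + V_a^0$ and density to the conclusion $\rho(\G).w\subset V_a^+ + V_a^0$, whereas you package the same reasoning by introducing the intermediate subspace $Z$ and verifying its $\G$-stability — correct, just slightly longer.
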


\noindent
We note that the assumption of the lemma holds if~$\G$ is e.g.\ semisimple or perfect. 
This and related properties of algebraic representations have already played an important role in homogeneous dynamics in the context of unipotent flows, see e.g.\ \cite[pp. 105--125]{Shah-expanding-translates}.

\begin{proof}[Proof of Lemma~\ref{shearing}]
The second part of the lemma follows from the first by taking $\rho=\operatorname{Ad}$ and~$V=\mathfrak g$.
Let now~$\rho:\G\to\operatorname{Aut}(V)$ be an algebraic representation, and define~$V^0_a$, $V_a^-$, $V_a^+$ as above.
If the conclusion would have been false, then $\rho(G_a^-)(w)\subset  V_a^++V^0_a$
and hence 
	\[
	\rho(G_a^+G_a^0)\rho(G_a^-)(w)\subset V_a^++V_a^0.
	\]
However, $G_a^+G_a^0G_a^-$
is Zariski-dense in $G$ and so $\rho(G_a^-)(w)\subset V_a^++V^0$ would generated
a subrepresentation containing~$w$, and since $w \in V_a^+$ the determinant of $\rho$ restricted to this subrepresentation would give a non-trivial
algebraic character of $\G$. This is a contradiction to the assumption on the radical of~$\G$ which implies
	that~$\G$ cannot have any nontrivial algebraic characters.
\end{proof}

Now let~$g=\exp(w)\in {G}_a^+$ be a nontrivial element.
By the above~$\pi_i(P^-)=(G_i)_{a_i}^-$ and similarly~$\pi_i(P^+)=(G_i)_{a_i}^+$
 for~$i=1,\ldots,r$, where $a_i=\pi_i(a)$.
As~$g$ is nontrivial, there exists some~$i$ such that~$\pi_i(g)$ is nontrivial.
This implies that there exists some~$u_i\in (G_i)_{a_i}^-$ 
with~$u_i\pi_i(g)u_i^{-1}\notin (G_i)_{a_i}^+(G_i)_{a_i}^0$ by Lemma~\ref{shearing}, 
which implies 
that there exists some~$u\in P^-$ with~$ugu^{-1}\notin {G}_{a}^+{G}_{a}^0$ 
as required. 

Thus Theorem~\ref{low entropy theorem} is applicable for $\mu$, $a$ and $P^-$, and for similar reasons it is also applicable for~$\mu$ and~$a$ if we use $P^+$ in the role of $U$.
We may conclude that for both $P^-$ and $P^+$, the measure $\mu$ must satisfy either \ref{not transient item} or \ref{aligned item} of that theorem.
It would transpire that \ref{not transient item} of Theorem~\ref{low entropy theorem} for either $U=P^-$ or $U=P^+$ leads to unipotent invariance, whereas \ref{aligned item} of the same theorem holding simultaneously for both these choices for $U$ would lead to a contradiction.

\subsubsection{Assume \ref{not transient item} of Theorem~\ref{low entropy theorem} holds for at least one of the choices~$U=P^{\pm}$}\label{not transient holds}

The argument in this case is essentially identical to that in \cite[Prop.~6.2]{full-torus-paper} or \cite[Lemma~8.4]{Low-entropy}; we provide it for the sake of completeness.

Assume e.g.~\ref{not transient item} holds for $U=P^-$. Recall that this means that~$\mu$ is not~$C_{{G}}(U)\cap {G}_a^-$-transient, i.e. that we can find for any $\mu$-null set $Y_0 \subset X$ two points~$x,u.x \not \in Y_0$
with some nontrivial~$u \in C_{{G}}(P^-)\cap {G}_a^-$ such that~$\mu_{x,P^-}=\mu_{u.x,P^-}$; we shall assume as we may that $Y_0$ contains the complement of the co-null set $X'$ of~\S\ref{basic properties}.
In view of \ref{5.1.3} of \S\ref{basic properties}, as $\supp \mu_{x,G_a^-} \subseteq P^-$ a.s., for any $u \in {G}_a^-$ if both points~$x,u.x$ are outside $Y_0$ the element $u$ satisfies~$u \in P^-$.
It follows that there is a nontrivial $u \in P^-$ and a $x \not \in Y_0$ so
that $u.x \not \in Y_0$ and ~$\mu_{u.x,P^-}=\mu_{x,P^-}$. Applying once more~\ref{5.1.3} of~\S\ref{basic properties} we see that
\(
\mu _ {x, P ^ -} \propto (\mu _ {u . x, P ^ -})u
\)
hence
\[
\mu _ {x, P ^ -} \propto (\mu _ {x, P ^ -})u
.\]
Since by \eqref{volume decay} and \eqref{module bound} (or alternatively by the more general growth estimate~\cite[Thm. 6.29]{Pisa-notes}) the function
\[
R \mapsto \mu _ {x, P ^ -}(B^{P^-}_R)
\]
grows at most polynomially, we get in fact that $\mu _ {x, P ^ -} = (\mu _ {x, P ^ -})u$.

From this, using Poincare recurrence and ergodicity, it is easy to conclude that $\mu$ is invariant under a $A$-normalized Zariski closed and Zariski connected unipotent subgroup $U \leq P ^ -$ as in e.g.~\cite[\S5--6]{KatokSpatzier96} or \cite[Prop.~6.2]{full-torus-paper}.
Indeed, for every $x$ denote 
by $H_x$ the (closed) group
\[
H _ x  = \left\{ u \in P ^ -:
\mu _ {x, P ^ -} = (\mu _ {x, P ^ -})u \right\};
\]
by the above discussion we note that for a set of positive $\mu$-measure of $x \in X$ the group $H _ x$ is nontrivial.
By \ref{5.1.4} of \S\ref{basic properties} it follows that for any $a ' \in A$ we have that
$H_{a' . x} = a' H_{x}(a')^{-1}$ a.s. In particular, by ergodicity of $A$ this implies that $H_x$ is nontrivial a.s. By Poincare recurrence for $a$ which contracts $P^-$ we have that a.s.~$H_x$ contains arbitrarily small and arbitrary large elements of $P ^ -$. Since $H _ x$ is a closed subgroup of $P ^ -$, a product of unipotent groups over local fields of characteristic zero, we see that $H_x$ contains a Zariski closed and Zariski connected unipotent subgroup; let  $V_x$ be the largest such subgroup of $H_x$. 
Using Poincare recurrence, ergodicity under $A$, and the assumptions that $A$ is of class-$\mathcal A'$, we may conclude as in Proposition~\ref{cor-eigenvalue} that $V_x$ is a.s. equal to a single $A$-normalized unipotent group $V$, and hence we have established Proposition~\ref{unipotent invariance proposition} under the assumption highlighted above.

\noindent
The argument for~$P^+$ is the same.

\subsubsection{Assume~\ref{aligned item} holds in Theorem~\ref{low entropy theorem} for both $U=P^+$ and $U=P^-$}\label{aligned leads to ruin}
In other words, we are assuming that~$\mu$ is both locally~$(\mu_{x,P^-},C_G(P^-))$-aligned and locally~$(\mu_{x,P^+},C_G(P^+))$-aligned (formally, \ref{aligned item} of Theorem~\ref{low entropy theorem} gives a bit more information, but this will suffice to arrive at a contradiction).

To see that our assumption leads to contradiction we shall use the recurrence of $ \mu$ under the subgroup~$P^{[\alpha_0]}$, which we know is nontrivial by Corollary~\ref{large P corollary}. This deviates from the strategy in previous applications of the low entropy method, e.g. in~\cite{full-torus-paper} where the torus action was used for this purpose. The reason this action is relevant to us is that since $P^{[\alpha_0]}$ commutes with both $P^+$ and $P^-$ (otherwise we would have already gotten unipotent invariance), by \eqref{mu product corollary}
we have that on a set of full measure $X'$, if both $x$ and $u.x \in X'$ for $u \in P^{[\alpha_0]}$ then
\begin{equation}\label{same conditional measures}
\mu_{x,P^-}=\mu_{u.x,P^-} \quad\text{and}\quad \mu_{x,P^+}=\mu_{u.x,P^+}
\end{equation}

The alignment property implies that there is a subset $B \subset X$ with measure $\mu (B) > 0.9$ and a $\delta>0$ so that if $x \in B$ and $y \in B \cap B ^ X _ \delta (x)$ have the same $P ^ -$-leafwise measures, i.e. $\mu _ {x, P ^ -} = \mu _ {y, P ^ -}$, then $x = h.y$ for $h$ in the unit ball in $C _ G (P ^ -)$.
Similarly, whenever $x \in B$ and $y \in B \cap B ^ X _ \delta (x)$ have the same $P^+$-leafwise measures then $x = h.y$ for $h$ in  the unit ball of $C _ G (P ^ +)$. We can assume $B \subset X '$, hence if we have $x \in B$ and (a possibly big) $u \in P ^ {[\alpha _ 0]}$ so that $u . x \in B \cap B _ \delta ^ X (x)$
then by \eqref{same conditional measures} the point $u . x$ is in $O . x$ for $O$ a fixed compact neighborhood of the identity in $C _ G (P ^ -) \cap C _ G (P ^ +) = C _ G (\langle P ^ -, P ^ + \rangle)$. In other words, writing $x = \Gamma g$ with $g \in G$, we have established that under the above assumptions
\begin{equation*}
\gamma g u ^{-1} \in gO \qquad\text{for some $\gamma \in \Gamma$}
.\end{equation*}

Fix an index $1 \leq i \leq r$ for which $G _ i ^ {[\alpha _ 0]}$ is nontrivial. It follows from Corollary~\ref{large P corollary} that $\pi _ i (P ^ {[\alpha _ 0]}) = G _ i ^ {[\alpha _ 0]}$, and hence by Proposition~\ref{prop-faithful} for a.e.~$x \in B$ there is a sequence $u _ k \in P ^ {[\alpha _ 0]}$ with $\pi _ i (u _ k) \to \infty$ so that $u _ k . x \in B$ for every $k$ and $u _ k . x \to x$ as $k \to \infty$.
By the alignment property we may conclude that there exists $\gamma _ k \in \Gamma$ and $o_k \in O$ so that $\gamma _ k g u _ k ^{-1} = g o_k$, or in other words
\[
\gamma _ k = g o_k u_k g^{-1}
.\]
Since $O$ is compact and $\pi _ i (u _ k) \to \infty$, the sequence of elements $\gamma _ k ' = \pi _ i (\gamma _ k)$ in $\Gamma _ i$ tend to infinity, and since the group $P ^ {[\alpha _ 0]}$ commutes with both $P ^ -$ and $P ^ +$, 
\begin{equation*}
\gamma _ k ' \in \pi _ i (g C _ G (\langle P ^ -, P ^ + \rangle)g^{-1}) = \pi_i(g) C _ {G _ i} (\langle (G_ i)_{a_i} ^ -, (G_ i)_{a_i} ^ + \rangle)\pi_i(g)^{-1}
.\end{equation*}
Note that by the assumptions on $A$ in Theorem~\ref{higher rank} the element $a _ i$ is nontrivial, and since it is of class-$\cA'$ both $(G_i) ^ - _ {a _ i}$ and $(G _ i) ^ + _ {a _ i}$ are nontrivial.
Let~$\fl_i$ be the Lie algebra generated by the Lie algebras of~$(G _ i) ^ - _ {a _ i}$ and~$(G _ i) ^ + _ {a _ i}$. This algebra
 is in fact an ideal in~$\fg_i$ and is called the Auslander ideal (that $\fl_i$ is an ideal can be seen using~\eqref{weights add} and the Jacobi identity). 
 
 Since $\GG_i$ is $\Q$-almost simple, the group~$G_i$ has finite center, hence $\gamma'_k \to \infty$ is incompatible with the folowing lemma, giving the sought after contradiction, completing the proof of Proposition~\ref{unipotent invariance proposition} under the assumption \eqref{assumption for proposition}.

 \begin{lemma}\label{simple one}
	 Assume~$\gamma \in  C_{G_i}(\fl_i) \cap \Gamma_i$. Then~$\gamma$
	belongs to the center of~$G_i$. 
\end{lemma}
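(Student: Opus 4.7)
The plan is to exploit the $\Q$-almost simplicity of $\GG_i$ to identify $\fl_i$ with a product of simple local factors of $\fg_i$, deduce centrality of $\gamma$ at one local factor from the hypothesis, and then propagate to every factor via the diagonal embedding of $\Q$-points and Borel density. Writing $\GG_i = R_{K_i/\Q}(\mathbf{H}_i)$ for an absolutely almost simple $\mathbf{H}_i$ over a number field $K_i$ gives a decomposition $\fg_i = \prod_v \fh_i(K_{i,v})$ as a product of $K_{i,v}$-simple Lie algebras indexed by the places $v$ of $K_i$ above $S$. As a Lie ideal of a product of simple Lie algebras, $\fl_i$ must equal $\prod_{v \in S_0} \fh_i(K_{i,v})$ for some set of places $S_0$, which is nonempty since $\fl_i$ contains the nontrivial subalgebra $\fg_{a_i}^-$ (nontrivial by the discussion preceding the lemma).

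For $\gamma \in C_{G_i}(\fl_i) \cap \Gamma_i$ and any $v_0 \in S_0$, the $v_0$-component $\gamma_{v_0}$ acts trivially via $\Ad$ on $\fh_i(K_{i,v_0})$, hence lies in the finite center $Z(\mathbf{H}_i)(K_{i,v_0})$ (the kernel of the adjoint representation of a semisimple algebraic group being its center). The heart of the argument, and the step I expect to be the main obstacle, is propagating this local centrality globally. I would use that $\Gamma_i$ is commensurable with $\mathbf{H}_i(\mathcal{O}_{K_i,S'})$ to find a finite index subgroup $\Gamma_i^0 = \Gamma_i \cap \mathbf{H}_i(K_i)$, and then further pass to $\Gamma_i^{00} = \Gamma_i^0 \cap \gamma^{-1} \Gamma_i^0 \gamma$, still of finite index. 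For any $\delta \in \Gamma_i^{00}$, both $\delta$ and $\gamma\delta\gamma^{-1}$ are $K_i$-points of $\mathbf{H}_i$ whose $v_0$-components coincide in $\mathbf{H}_i(K_{i,v_0})$ (as $\gamma_{v_0}$ is central there); injectivity of the diagonal embedding $\mathbf{H}_i(K_i) \hookrightarrow \mathbf{H}_i(K_{i,v_0})$ then forces $\gamma\delta\gamma^{-1} = \delta$, so $\gamma$ centralizes every element of $\Gamma_i^{00}$.

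To conclude, I would invoke Borel density together with the irreducibility of $\Gamma_i$ coming from $\Q$-almost simplicity: the projection of the finite index subgroup $\Gamma_i^{00}$ to each local factor $\mathbf{H}_i(K_{i,v})$ is Zariski dense in $\mathbf{H}_i$ viewed over $K_{i,v}$. Since $\gamma_v$ commutes with a Zariski dense subset, it lies in $Z(\mathbf{H}_i)(K_{i,v})$ for every $v$, giving $\gamma \in Z(G_i)$ as required. The main subtlety lies in this last density step, which must be applied uniformly at both archimedean and non-archimedean places and in the presence of possibly compact local factors; for non-compact factors this is Borel density, while for compact factors one uses irreducibility together with topological density implying Zariski density, both standard in the $S$-arithmetic setting for $\Q$-almost simple lattices.
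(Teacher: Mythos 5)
Your argument is correct but takes a genuinely different route from the paper's, which avoids restriction of scalars and Borel density altogether. The paper forms the subgroup $\MM = \bigcap_{\eta\in\GG_i(\Q)} \eta\, C_{\GG_i}(\gamma)\,\eta^{-1} = \bigcap_{\eta} C_{\GG_i}(\eta\gamma\eta^{-1})$, which is normal in $\GG_i$ because $\GG_i(\Q)$ is Zariski dense in the Zariski connected group $\GG_i$; it then observes that $\Lie(\MM) \supseteq \fl_i$ since $\fl_i$ is an ideal of $\fg_i$ (hence $\Ad_{\GG_i}$-invariant by Zariski connectedness) fixed pointwise by $\Ad_\gamma$, so $\MM$ is a positive-dimensional normal $\Q$-subgroup of the $\Q$-almost simple $\GG_i$ and must equal $\GG_i$, giving centrality of $\gamma$ directly. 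Your route instead descends via restriction of scalars to the absolutely simple group $\mathbf{H}_i$, gets centrality of $\gamma_{v_0}$ at a single place $v_0$ where $\fl_i$ is nontrivial, and propagates globally using the injectivity of $\mathbf{H}_i(K_i)\hookrightarrow\mathbf{H}_i(K_{i,v_0})$ together with Zariski density of the arithmetic lattice. Both work; the paper's normal-core argument is shorter and purely algebraic, while yours makes the mechanism transparent and does not require noticing that $\fl_i$ is preserved by $\Ad_{\GG_i(\Q)}$, at the cost of invoking the classification of $\Q$-almost simple groups and Borel density in the $S$-arithmetic setting. One small simplification: since the paper's definition of an $S$-arithmetic quotient already gives $\Gamma_i < G_i\cap\GG_i(\Q)$, the element $\gamma$ is automatically a $K_i$-rational point of $\mathbf{H}_i$, so $\gamma\delta\gamma^{-1}$ is $K_i$-rational for every $\delta\in\Gamma_i$ and the intermediate groups $\Gamma_i^0$, $\Gamma_i^{00}$ are not needed.
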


\begin{proof}
  Note that~$C_{\GG_i}(\gamma)$ is an algebraic group defined over~$\Q$ and the group of rational elements
  of the Zariski connected group~$\GG_i$ are Zariski dense, hence
  \[
    \MM=\bigcap_{\eta\in\GG_i(\Q)}\eta\, C_{\GG_i}(\gamma)\,\eta^{-1}=\bigcap_{\eta\in{\GG_i}(\Q)} C_{\GG_i}(\eta\gamma\eta^{-1})
    \unlhd\GG_i
  \]
 is a normal subgroup of~$\GG_i$ defined over~$\Q$. Since the Lie algebra of $\MM$ contains $\fl_i$, it is of positive dimension. Since $\GG _ i$ is Zariski connected and $\Q$-almost simple it follows that $\MM=\GG _i$. Thus any $h \in G_i$ commutes with $\gamma$ as claimed .
\end{proof}

\subsection {Proof of Proposition~\ref{unipotent invariance proposition} with no assumptions}
\label{removing an assumption}
In this section we explain how Proposition~\ref{unipotent invariance proposition} can be deduced from the special case of this proposition where \eqref{assumption for proposition} is assumed. We remark that this assumption is often automatically satisfied: specifically, if $S$ contains at least one finite place $p$, and if for at least one $i = 1, \dots, r$ we have that $A _ i$ has a nontrivial projection to $\GG_i (\Q _ p)$, then for any coarse Lyapunov weight $[\alpha _ 0]$ appearing in the Lie algebra of $\GG_i (\Q _ p)$ there is an element $a \in A$ satisfying \eqref{assumption for proposition}.

Thus the only case we need to consider is that $A<\GG(\R)$.
In this case
we can use the eigenvalue assumption at~$\infty$ and consider~$A\simeq\Zd$ as a lattice
in a connected~$\R$-diagonalizable Lie subgroup~$\bar{A}\simeq\R^d$ 
and study a new measure~$\bar{\mu}$ instead of~$\mu$. Here we define
\[
 \bar{\mu}=\int_{\bar{A}/A}\bar{a}_*\mu\  d\bar{a},
\] 
where we integrate with respect to a normalized Haar measure on the compact abelian 
group~$\bar{A}/A$. Note that~$\bar{\mu}$
is then an ergodic joining for the actions of subgroups~$\R^d\simeq\bar{A_i}<G_i\cap\GG(\R)$. 
Now consider again some weight~$\alpha_0$ and some nontrivial~$\bar{a}\in\ker\alpha_0$ which now definitely exists.
The remainder of the argument is as before and leads to invariance of~$\bar{\mu}$ under a Zariski connected unipotent subgroup $U$ normalized by $\bar A$ (equivalently, is normalized by $A$).

It follows from the Mautner phenomenon that almost every $A$-ergodic component $\mu '$ of $\bar \mu$ is $U$-invariant. But since $\mu$ is $A$-ergodic, every $A$-ergodic component of $\bar \mu$ is of the form $\mu ' = a. \mu$ for some $a \in \bar A$. As $U$ is normalized by $A$ it follows that $\mu$ itself is $U$-invariant.

\subsection{Inductive proof of Theorem~\ref{higher rank} using Proposition~\ref{unipotent invariance proposition}}\label{inductive section}

Let $\mu$ be as in Theorem~\ref{higher rank}.
By Proposition~\ref{unipotent invariance proposition}, the measure~$\mu$ is invariant
under an~$A$-normalized Zariski closed and Zariski connected unipotent subgroup~$U$. 
We now apply Theorem~\ref{MTclassA}
to~$\mu$ and the subgroup~$\langle A, U\rangle$.
It follows that there exists a~$\Q$-group~$\HH$, some~$g\in G$, and a finite index subgroup~$H<\HH(\Q_S)$ containing~$U$
such that~$\mu$ is invariant under~$g^{-1}H g$ and supported on~$\Gamma N^1_G(\HH) g$ for some~$g\in G$ with~$\Gamma g\in\supp\mu$. 
Replacing~$\mu$ by the pushforward by~$g^{-1}$
and also~$A$ by its conjugate by~$g$, we may assume without loss of generality that~$g$ is the identity element~$e$.

The next lemma is phrased more generally than we need it here
as it will also be useful in the proof of Theorem~\ref{thm:unipotent}. 

\begin{lemma}\label{joining-groups}
	Suppose~$\GG=(\GG_1\times\cdots\times\GG_r)\ltimes\UU_{\textrm{rad}}$ is a semidirect product
	of the direct product of~$\Q$-almost simple groups~$\GG_1,\ldots,\GG_r$ and a unipotent radical~$\UU_{\textrm{rad}}$. 
	Suppose~$\MM<\GG$ is a connected subgroup with~$\pi_i(\MM)=\GG_i$ for all~$i=1,\ldots,r$, where~$\pi_i:\GG\to\GG_i$
	is the canonical projection map. Then~$\MM$ can be written as $(\GG_1'\times\cdots\times\GG_{t}')\ltimes\UU'_{\textrm{rad}}$. 
	More precisely, the radical of~$\MM$ equals its unipotent radical and is given 
	by~$\UU'_{\textrm{rad}}=\MM\cap\UU_{\textrm{rad}}$, and the Levy subgroup
	is a direct product of~$\Q$-almost simple groups~$\GG_j'$ for~$j=1,\ldots,t$. After possibly permuting the 
	groups~$\GG_1\times\cdots\times\GG_r$ we have that~$\GG_1'$ is isomorphic
	to a subgroup $\bar{\GG}_1<\GG_1\times\cdots\times\GG_{s_1}$ (where the isomorphism 
	is given by taking the quotient modulo the unipotent radical), is isogeneous to each of
	the groups~$\GG_1,\ldots,\GG_{s_1}$, and that~$\bar{\GG}_1$ projects onto~$\GG_1,\ldots,\GG_{s_1}$ (in the category
	of algebraic groups). Similarly, for~$\GG_2'\cong\bar{\GG}_2<\GG_{s_1+1}\times\cdots\times\GG_{s_2}$, etc.~up
	to~$\GG_{t}'\cong\bar{\GG}_{t}<\GG_{s_{t-1}+1}\times\cdots\times\GG_{r}$.
\end{lemma}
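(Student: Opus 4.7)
The plan is to prove the lemma in three stages: first, identify the solvable radical of $\MM$ with $\MM\cap\UU_{\textrm{rad}}$; second, pass to a Levi complement to reduce to the case where the unipotent radical is trivial; and third, carry out a Goursat-type analysis of a semisimple subgroup of a direct product of $\Q$-almost simple groups.

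For the first stage, I would examine $\pi_i(\operatorname{Rad}(\MM))$ for each $i$. This is a connected solvable closed subgroup of $\GG_i$, normal there because $\operatorname{Rad}(\MM)$ is normal in $\MM$ and $\pi_i(\MM)=\GG_i$. By $\Q$-almost simplicity of $\GG_i$ such a subgroup must be trivial, so $\operatorname{Rad}(\MM)\leq\UU_{\textrm{rad}}$. Since $\MM\cap\UU_{\textrm{rad}}$ is visibly a normal unipotent subgroup of $\MM$, the reverse inclusion is automatic. This gives $\UU'_{\textrm{rad}}:=\operatorname{Rad}(\MM)=\MM\cap\UU_{\textrm{rad}}$, and in particular the radical of $\MM$ is unipotent.

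For the second stage, I would fix a Levi decomposition $\MM=\LL'\ltimes\UU'_{\textrm{rad}}$ with $\LL'$ a maximal connected semisimple subgroup. Since $\UU'_{\textrm{rad}}\subseteq\UU_{\textrm{rad}}$ projects trivially under each $\pi_i$, the semisimple subgroup $\LL'$ continues to satisfy $\pi_i(\LL')=\GG_i$. The problem therefore reduces to the following Goursat-type statement: describe a connected semisimple subgroup $\LL'<\GG_1\times\cdots\times\GG_r$ that surjects onto each factor.

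For the third stage, I would decompose $\LL'$ as an almost direct product of its $\Q$-almost simple factors $\GG_1',\ldots,\GG_t'$; these factors commute pairwise, since their commutators lie in the finite pairwise intersections yet are connected. For each $(i,j)$ the image $\pi_i(\GG_j')$ is a closed connected normal subgroup of $\pi_i(\LL')=\GG_i$, so by almost simplicity it is either trivial or all of $\GG_i$, and in the latter case the kernel $\GG_j'\cap\ker\pi_i$ is a proper normal subgroup of $\GG_j'$ hence finite. The main observation, and the step that will require the most care, is that for each $i$ \emph{exactly one} $j$ satisfies $\pi_i(\GG_j')=\GG_i$: existence follows from $\GG_i=\pi_i(\LL')=\pi_i(\GG_1')\cdots\pi_i(\GG_t')$ together with the dichotomy above, while uniqueness follows because if two distinct factors both surjected onto $\GG_i$ their images would commute, forcing $\GG_i$ to be abelian and contradicting almost simplicity. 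Setting $I_j=\{i:\pi_i(\GG_j')=\GG_i\}$ partitions $\{1,\ldots,r\}$, and after reordering so that each $I_j$ is a consecutive block $\{s_{j-1}+1,\ldots,s_j\}$, the combined projection $\GG_j'\to\prod_{i\in I_j}\GG_i$ has finite kernel and realizes $\GG_j'$ as isogenous to its image $\bar\GG_j$, which surjects onto and is isogenous to each of the factors $\GG_{s_{j-1}+1},\ldots,\GG_{s_j}$, exactly as stated.
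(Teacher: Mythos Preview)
Your argument is correct. The approach differs from the paper's in both order and method. The paper first treats the case $\UU_{\textrm{rad}}=\{e\}$ by induction on $r$: project $\MM$ onto $\GG_1\times\cdots\times\GG_{r-1}$, apply the inductive hypothesis, and then analyze whether $\MM\cap\GG_r$ is positive-dimensional (whence $\MM$ splits off $\GG_r$) or finite and central (whence $\MM$ is the graph of an isogeny from one of the inductively obtained factors onto $\GG_r/(\MM\cap\GG_r)$). Only afterwards does the paper handle general $\UU_{\textrm{rad}}$, using the semisimplicity of $\pi(\MM)$ just established to conclude that the unipotent radical of $\MM$ coincides with $\MM\cap\UU_{\textrm{rad}}$ and that the Levi factor is semisimple.

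You instead identify the radical directly by projecting it through each $\pi_i$, which lets you see immediately that it is unipotent and hence that the Levi complement is semisimple; you then handle the semisimple case globally by decomposing $\LL'$ into its almost-simple factors and recording, for each factor, the set $I_j$ of coordinates onto which it surjects. This avoids the induction on $r$ and makes transparent why the product $\GG'_1\times\cdots\times\GG'_t$ is genuinely direct rather than merely almost direct: the blocks $I_j$ are disjoint, so the images $\bar\GG_j$ live in disjoint coordinate subproducts and hence intersect trivially. One small point worth spelling out: the Levi complement $\LL'$ lives in $\MM\leq\GG$, not literally inside $\GG_1\times\cdots\times\GG_r$, but since $\LL'\cap\UU_{\textrm{rad}}\subseteq\LL'\cap\UU'_{\textrm{rad}}=\{e\}$ the restriction $\pi|_{\LL'}$ is an isomorphism onto its image, so your Stage~3 really concerns $\pi(\LL')$ --- consistent with the lemma's phrase ``where the isomorphism is given by taking the quotient modulo the unipotent radical''.
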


\begin{proof}
	We begin with the case of~$\UU_{\textrm{rad}}$ being trivial, which we will prove by induction on~$r$. 
	Projecting~$\MM<\GG_1\times\cdots\times\GG_r$ to~$\tilde{\GG}=\GG_1\times\cdots\times\GG_{r-1}$ we obtain
  a connected subgroup~$\tilde{\MM}$, which by induction is of the  
  form~$\tilde{\MM}=\tilde\GG_1\times\cdots\times\tilde\GG_{\tilde{r}}$ 
  as in the lemma.  

Notice that~$\pi_r(\MM)=\GG_r$ implies that~$\MM\cap\GG_{r}$ is a normal~$\Q$-subgroup.
If~$\MM\cap\GG_{r}$ has positive dimension, then~$\MM\cap\GG_r=\GG_r$ as~$\GG_r$ is a~$\Q$-almost simple group. However, 
this implies that~$\MM=\tilde{\MM}\times\GG_r$ and hence has the desired form.
So suppose~$\MM\cap\GG_r$ is finite, and due to~$\pi_r(\MM)=\GG _r$ also central. Taking the quotient of~$\GG_r$
by~$\MM\cap\GG_r$ we may consider~$\MM$ as the graph of a homomorphism~$\phi$ from~$\tilde{\MM}$ to~$\GG_r/\MM\cap\GG_r$. As~$\GG_r$ is~$\Q$-almost simple and~$\tilde{\MM}=\tilde\GG_1\times\cdots\times\tilde\GG_{\tilde{r}}$ is defined over~$\Q$ and semi-simple, it follows that the kernel
of this homomorphism must equal the direct product of all but one of the direct factors of~$\tilde{\MM}$. We may
assume that the kernel equals~$\tilde\GG_1\times\cdots\times\tilde\GG_{\tilde{r}-1}$ and see that~$\MM=\tilde\GG_1\times\cdots\times\tilde\GG_{\tilde{r}-1}\times\FF$ for some subgroup~$\FF$ of~$\tilde\GG_{\tilde{r}}\times\GG_r$. It follows that~$\FF$ represents an isogeny between~$\tilde\GG_{\tilde{r}}$
and~$\GG_r$, which once more shows that~$\MM$ has the desired form.

Let us return to the general case of~$\MM<\GG=(\GG_1\times\cdots\times\GG_r)\ltimes\UU_{\textrm{rad}}$ as in the lemma.
Let~$\pi:\GG\to\GG_1\times\cdots\times\GG_r$ be the canonical factor map modulo the unipotent radical. By the above~$\pi(\MM)\cong\GG_1'\times\cdots\times\GG'_{t}$ and each of the~$\Q$-almost simple groups~$\GG_i'$ can be described
as in the lemma. In particular,~$\pi(\MM)$ is semisimple. Let now~$\MM=\LL'\cdot\UU'_{\textrm{rad}}$ 
be a Levy decomposition of~$\MM$ for a reductive~$\Q$-group~$\LL$ and a unipotent~$\Q$-group~$\UU'_{\textrm{rad}}$. 
Clearly~$\UU_{\textrm{rad}}\cap\MM<\UU'_{\textrm{rad}}$, which implies that~$\LL'\cong\pi(\LL')$ and~$\pi(\MM)=\pi(\LL')\pi(\UU'_{\textrm{rad}})$ is a Levy decomposition of~$\pi(\MM)$. Since~$\pi(\MM)$ is semisimple,
we see that~$\LL'\cong\pi(\MM)$ is semisimple and~$\UU_{\textrm{rad}}\cap\MM=\UU'_{\textrm{rad}}$ which gives the lemma.
\end{proof}

The assumption that~$\mu$ is a joining implies in particular that~$\pi _ i (\supp \mu)=X_i$.
As~$\supp \mu \subset \Gamma N^1_G(\HH) g$
this shows that the $\Q$-group $\MM=N_\GG(\HH)^\circ$ satisfies
\begin{equation*}
\pi_i(N_\GG(\HH)^\circ)=\GG_i \qquad\text{for~$i=1,\ldots,r$}.
\end{equation*}
Applying Lemma~\ref{joining-groups} to~$\MM$ we have that~$\MM=N_\GG(\HH)^\circ$
is a direct product of~$\Q$-almost simple algebraic groups~$\GG_1',\ldots,\GG_t'$, where each factor corresponds
possibly to a subset of factors of~$\GG$. Moreover, since~$\HH\unlhd\MM$ we see that~$\HH$ equals
the product of some of the direct factors of~$\MM$, say~$\HH=\GG_{s+1}'\times\cdots\times\GG_t'$ for~$s<t$. 
The definition of~$\MM$ then implies that the other factors~$\GG_1'=\GG_1,\ldots,\GG_s'=\GG_s$ of~$\MM$
actually equal direct factors of~$\GG$. 

We define~$M=G\cap N_\GG(\HH)^\circ(\Q_S)$. Note that as $\MM$ has no $\Q$-characters, $\Gamma M$ is closed in $X$.
Since $A$ is of class-$\cA'$ the Zariski closure of~$A$ is connected, hence as~$A<N_\GG(\HH)(\Q_S)$ it follows that in fact~$A<M$.  Let us also write~$G'=G_1\times\cdots\times G_s$ and~$H_G=G\cap\HH(\Q_S)$ so that~$M=G'\times H_G$
by the above description of~$\MM$ and~$\HH\unlhd\MM$.
Also since~$\mu$ is supported on~$\Gamma N_G(H)$ by Theorem~\ref{MTclassA}
we obtain from ergodicity that~$\mu$ is supported on the closed orbit~$\Gamma M$ (recall that we assume~$\Gamma\in\supp\mu$). 
To summarize, we may consider~$\mu$ as a measure on~$X=X'\times X_\HH$ where~$X'=\Gamma'\backslash G'$ and~$X_\HH=\Gamma_\HH\backslash H_G$ for~$\Gamma'=\Gamma\cap G'$ and~$\Gamma_\HH=\Gamma\cap H_G$. Let $a(n)_\HH$ denote the $H_G$ component of $a(n)$, and let \[V=\{n\in\Z^d:a(n)_\HH\in H\}<\Z^d.\] 
This group has finite index in $\Z^d$, and hence~$A_V=\{a(n):n\in V\}$ is of finite index in $A$. 

Now recall that~$\mu$ is invariant under a finite index subgroup~$H<H_G<\HH(\Q_S)$ from Theorem~\ref{MTclassA}.
Consider the ergodic decomposition of~$\mu$ with respect to the action of~$A_VH$.
Since~$\mu$ is invariant and ergodic under~$AH$, and~$A_VH$ has finite index in~$AH$, it follows that~$\mu$
is actually a finite convex combination of~$A_VH$-invariant and ergodic measures.
We let~$\rho$ be one of these ergodic components of~$\mu$, so that
\[
\mu=\frac{1}{[\Z^d:V]}\sum_{a\in A/A_V}a.\rho.
\] 
We may also suppose that~$\Gamma e\in\supp\rho$.

Let $\bar{\mathcal B}_{X'}$ denote the inverse image of the Borel $\sigma$ algebra on $X'$ under the natural projection~$X\to X'$.
The fiber measures~$\rho_x^{\bar{\mathcal{B}}_{X'}}$ have the form $\delta_{x'}\times\nu_{x}$ with $\nu _x$ a probability measure on $X_\HH$ where $x'$ is the projection of $x$ to $X'$.
Since $\rho$ is $H$ invariant and $H$ acts trivially on $X'$ the measures $\nu _x$ are $H$-invariant a.e. 
By the equivariance property~$a.\mu_x^{\bar{\mathcal{B}}_{X'}}=\mu_{a.x}^{\bar{\mathcal{B}}_{X'}}$ for every~$a\in A_V$. 
Since~$\nu_x$ is invariant under the image $a_\HH$ of~$a$ in~$H_G$, 
the probability measures~$\nu_{x}$ on~$X_\HH$ are constant on the orbits of~$a$. 
By ergodicity we obtain that~$\nu_{x}=\rho_\HH$
for some~$H$-invariant probability measure~$\rho_\HH$ 
on~$X_\HH$.

Clearly~$\rho_\HH$ equals the push-forward of~$\rho$ to~$X_\HH$,
is~$H$-invariant and moreover is $H$-ergodic since $\rho$ is $A_VH$-ergodic and the projection of $A_V$ to $\HH$ is contained in $H$.
As~$H$ is a finite index subgroup of~$H_G$,
this implies that~$\rho_\HH$
is the Haar measure $m_{\Gamma_\HH H}$ on the~$H$-orbit~$\Gamma_\HH H$ (recall that we set things up so that~$\Gamma e\in\supp\rho$). 

Denote the projection of~$A_V$ to~$G'$ by~$A'$.
It follows that~$\rho=\rho'\times m_{\Gamma_\HH H}$
for an~$A'$-ergodic joining~$\rho'$ on~$X'$. By induction we may already suppose that~$\rho'= m_{\Gamma'H'}$
for some finite index subgroup~$H'<\HH'(\Q_S)$ of a semisimple algebraic~$\Q$-group~$\HH'$ that contains~$A'$.
In other words, we have shown that~$\rho=m_{\Gamma (H'\times H)}$. 

As $A$, and hence the projection of $A$ to $\GG'$, is of class-$\mathcal A'$, the Zariski closure of $A'$ contains the projection of $A$ to $\GG'$, hence
this projection too belongs to~$\HH'$. 
As a finite index subgroup of a semi-simple
group is always normalized by all elements of any non-compact factor, it follows that~$A$ normalizes~$H'\times H$.
In particular we see that~$H''=(H'\times H) A$ is still a finite index subgroup of
the group of~$\Q_S$-points of~$\HH''=\HH'\times\HH$, which gives
\[
 \mu=\frac{1}{[\Z^d:V]}\sum_{a \in A/A_V}a.(m_{\Gamma H'\times H})=m_{\Gamma H''}
\]
and so the theorem.

\subsection{Proof of Corollary~\ref{cor:pairs}}
By Theorem~\ref{higher rank} the joining~$\mu$ is algebraic, so there exists a~$\Q$-subgroup~$\HH<\GG$ such that~$\mu$
is invariant under~$g^{-1}H g$ and supported on~$\Gamma Hg$ for some finite index subgroup~$H=\HH(\Q_S)$ and some~$g\in G$. 
Since~$\mu$ is a joining,~$\HH$ has to satisfy the assumption of Lemma~\ref{joining-groups}. Together with the 
assumption of the corollary the lemma implies that~$\HH=\GG_1\times\cdots\times\GG_r$ and the corollary follows.

\section{The proof of Theorem~\ref{thm:unipotent}}
\label{proof-unipotent}
The proof follows the same outline as the proof of Theorem~\ref{higher rank} in Section~\ref{proof-higher-rank},
we will be brief where there is little difference in the argument and provide details at the steps that are different.

\subsection{The assumptions for Theorem~\ref{low entropy theorem} and unipotent invariance}
Recall that in Theorem~\ref{thm:unipotent} the measure~$\mu$ is an~$A$-invariant
and ergodic probability measure such that~$\pi_*(\mu)$ equals the Haar 
measure on~$(\Gamma_1\times\cdots\times\Gamma_r)\backslash L$,
where~$\pi:G\to L$ is the projection map modulo the unipotent radical. 
As in \S\ref{proof-higher-rank}, we first prove the following important step towards Theorem~\ref{thm:unipotent}:

\begin{proposition}\label{unipotent invariance proposition 2}
Let $\mu$ be an $A$-invariant and ergodic probability measure as in Theorem~\ref{thm:unipotent}. Then $\mu$ is invariant under an $A$-normalized Zariski closed and Zariski connected unipotent subgroup $U$.
\end{proposition}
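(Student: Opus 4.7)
The plan is to mimic the argument of \S\ref{proof of invariance} as closely as possible, with the hypothesis $\pi_*\mu=m_{X_L}$ substituting for the joining assumption used there. The first step is to apply Corollary~\ref{large P corollary} with $N=\urad(\Q_S)$, $G_1=L$, and $\mu_1=m_{X_L}$ to conclude $\pi(P^{[\alpha]})=L^{[\alpha]}$ for every coarse Lyapunov weight $[\alpha]$, where $P^{[\alpha]}$ is as in Lemma~\ref{cor-eigenvalue}. Since each $a_i$ is proper and of class-$\cA'$, some non-trivial coarse Lyapunov weight $[\alpha_0]$ satisfies $L^{[\alpha_0]}\neq\{e\}$, hence $P^{[\alpha_0]}\neq\{e\}$; after the averaging argument of \S\ref{removing an assumption}, we may assume there is $a\in A$ with $\alpha_0(a)=0$. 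Define $P^\pm$ as in \S\ref{proof of invariance}; then $\pi(P^\pm)=L_a^\pm$ and $\mu$ is faithfully $P^\pm$-recurrent.

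Exactly as in \S\ref{proof of invariance}, if $P^{[\alpha_0]}$ fails to commute with $P^-$ or with $P^+$, then the high entropy method (Theorem~\ref{high entropy}), applied with some $a'\in A$ contracting both $G^{[\alpha_0]}$ and the relevant $P^\pm$, yields a non-trivial $A$-normalized Zariski connected unipotent group under which $\mu$ is invariant. So we may assume $P^{[\alpha_0]}$ commutes with both $P^-$ and $P^+$, and plan to apply Theorem~\ref{low entropy theorem} with $U=P^-$ (and symmetrically with $U=P^+$).

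The main obstacle will be verifying the shearing hypothesis of Theorem~\ref{low entropy theorem}, namely that for every non-trivial $g\in G_a^+$ there is $u\in P^-$ with $ugu^{-1}\notin G_a^+G_a^0$. If $\pi(g)\neq e$, I would argue as in \S\ref{proof of invariance}: apply Lemma~\ref{shearing} in a $\Q$-almost simple factor $\GG_i$ of $\LL$ to find $u_L\in L_a^-$ shearing $\pi_i\pi(g)$ out of $(G_i)_{a_i}^+(G_i)_{a_i}^0$, then lift $u_L$ to $u\in P^-$ via $\pi(P^-)=L_a^-$. The genuinely new case is $g\in\urad(\Q_S)\cap G_a^+$, and here the idea is to exploit the lower central series of $\urad$. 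Let $k$ be maximal with $w=\log g\in\fu_{\textrm{rad}}^{(k)}$; then $V_k=\fu_{\textrm{rad}}^{(k)}/\fu_{\textrm{rad}}^{(k+1)}$ is an algebraic $\LL$-module inheriting an $a$-weight decomposition, and $\bar w\in(V_k)_a^+$ is non-zero. Since $\LL$ is semisimple, Lemma~\ref{shearing} applied to $\LL$ acting on $V_k$ provides $u_L\in L_a^-$ with $\rho(u_L)(\bar w)\notin(V_k)_a^++(V_k)^0$. Lift $u_L$ to $u\in P^-$ and write $u=u_Lu_r$ with $u_r\in\urad(\Q_S)\cap G_a^-$; since conjugation by $u_r$ inside $\urad$ acts trivially on $V_k$ (any commutator $[\log u_r,\cdot\,]$ on $\fu_{\textrm{rad}}^{(k)}$ lands in $\fu_{\textrm{rad}}^{(k+1)}$), the image of $\log(ugu^{-1})$ in $V_k$ equals $\rho(u_L)(\bar w)$, so $\log(ugu^{-1})$ has a non-trivial $(\fu_{\textrm{rad}})_a^-$ component, i.e. $ugu^{-1}\notin G_a^+G_a^0$.

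With Theorem~\ref{low entropy theorem} thus applicable to both $U=P^-$ and $U=P^+$, its two alternatives are handled exactly as in \S\ref{proof of invariance}. If alternative \ref{not transient item} holds for one of these groups, the Poincar\'e-recurrence and $A$-ergodicity argument of \S\ref{not transient holds} yields the desired $A$-normalized Zariski closed Zariski connected unipotent invariance of $\mu$. If the aligned alternative \ref{aligned item} holds simultaneously for $U=P^-$ and $U=P^+$, the contradiction goes as in \S\ref{aligned leads to ruin}: faithful $P^{[\alpha_0]}$-recurrence combined with alignment and the commutation of $P^{[\alpha_0]}$ with $P^\pm$ produces, upon choosing $i$ with $(G_i)^{[\alpha_0]}\neq\{e\}$, a sequence $\gamma_k\in\Gamma$ such that $\pi_i\pi(\gamma_k)\to\infty$ in $\Gamma_i$ while $\pi_i\pi(\gamma_k)\in C_{G_i}(\fl_i)$ (using that the Auslander ideal $\fl_i$ of $\fg_i$ is an ideal, so conjugation by $\pi_i\pi(g)$ preserves $C_{G_i}(\fl_i)$). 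Lemma~\ref{simple one} then forces these projections to lie in the finite center of $G_i$, contradicting $\pi_i\pi(\gamma_k)\to\infty$.
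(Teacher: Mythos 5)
Your proof is correct and follows the same strategy as the paper's proof of Proposition~\ref{unipotent invariance proposition 2}: surjectivity of $\pi(P^{[\alpha]})$ via Corollary~\ref{large P corollary}, the high/low entropy dichotomy, the lower central series argument on $V_k=\mathfrak u_k/\mathfrak u_{k+1}$ to verify the shearing hypothesis for $g\in\urad(\Q_S)$, and the aligned case ruled out via faithful $P^{[\alpha_0]}$-recurrence together with Lemma~\ref{simple one}. The only cosmetic difference is that you apply Lemma~\ref{shearing} to $\LL$ acting on $V_k$ and then lift through the decomposition $u=u_Lu_r$, while the paper phrases it as the representation of $\GG$ on $V$ with $\UU_{\textrm{rad}}$ acting trivially --- these are the same argument.
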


Assume first that 
\[
\textit{there is some~$[\alpha_0]\in[\Phi]$ and $a\in A$ with~$L^{[\alpha_0]}\neq \{e\}$ and ~$\alpha_0(a)=0$.}
\]
The general case follows from this as in \S\ref{removing an assumption}; we omit the details of this easy reduction.
 
By Corollary~\ref{large P corollary},
it follows that if we let~$P^{[\alpha]}$ be the subgroup from Lemma~\ref{cor-eigenvalue}
we see that~$\pi(P^{[\alpha]})=L^{[\alpha]}$ for any coarse Lyapunov weight~$[\alpha]$. 
 Set
\[
P^-=\prod_{[\alpha]:\alpha(a)<0}P^{[\alpha]};
\]
by Theorem~\ref{EKtheorem} this group is for a.e.~$x$ the smallest
Zariski connected unipotent subgroup of~${G}_a^-$ that contains~$\supp\mu_x^{{G}_a^-}$. This implies that~$\mu_x^{P^-}(C \cap P^-)=\mu_x^{{G}_a^-}(C)$ for all $C\subset G_a^-$ a.e.\
and that~$\mu$ is faithfully~$P^-$-recurrent (cf.~Remark~\ref{faithful remark}). As in \S\ref{proof of invariance}, we may assume~$P^{[\alpha_0]}$ commutes with both~$P^-$ and~$P^+$, as otherwise we may invoke Theorem~\ref{high entropy}
and obtain unipotent invariance of~$\mu$.

It remains to check that $P^-$ satisfies the conditions set on $U$ by
Theorem~\ref{low entropy theorem}, namely that for all nontrivial $g =\exp(w)\in G ^ +_a$ there exists some $ u \in P^-$ with $u g u ^{- 1} \not \in G ^ + _aG ^ 0_a$.
By the above~$\pi(P^-)=L_{a}^-$ and similarly~$\pi(P^+)=L_{a}^+$. 
If~$\pi(g)$ is nontrivial, we have  
that~$u'\pi(g)(u')^{-1}\notin L_{a}^+\,L_{a}^0$ 
for some~$u'\in L_{a}^-$ by Lemma~\ref{shearing},  which implies 
that there exists some~$u\in P^-$ with~$ugu^{-1}\notin {G}_{a}^+{G}_{a}^0$ 
as required.  

So assume now that~$g=\exp(w)\in U_{\textrm{rad}}$. Let~$\mathfrak u_0$ be the Lie algebra
of~$U_{\textrm{rad}}$ and define inductively~$\mathfrak u_{k+1}=[\mathfrak u_0,\mathfrak u_k]$
for~$k\geq 0$. Since~$w\neq 0$ there exists a maximal~$k$ with~$w\in\mathfrak u_k\setminus \mathfrak u_{k+1}$.
Consider now the representation~$\rho$ of~$\G$ on~$V=\mathfrak u_k/\mathfrak u_{k+1}$
induced by the adjoint representation.
We note that by the choice of this representation we have that~$\UU_{\textrm{rad}}$ acts trivially
on~$V$. Applying Lemma~\ref{shearing} gives that there exists some~$u'\in L_a^-$
with~$\rho(u')(w)\notin V_a^++V_a^0$. However, since~$\pi(P^-)=L_a^-$
this implies there must be an element~$u\in P^-$ with~$\rho(u)(w+\mathfrak{u}_{k+1})\notin V_a^++V_a^0$,
which gives~$\operatorname{Ad}_u(w)\notin\mathfrak g^+_a+\mathfrak g^0_a$
as required.

By the above we can again apply Theorem~\ref{low entropy theorem} to the
measure~$\mu$ on~${X}$, the subgroup~$U=P^-$ (or~$U=P^+$)
and the element~$a$. Exactly the same argument as in \S\ref{not transient holds} shows the first possible conclusion of Theorem~\ref{low entropy theorem}
leads to unipotent invariance of~$\mu$, hence to prove Proposition~\ref{unipotent invariance proposition 2} we only need to rule out that~\ref{aligned item} of Theorem~\ref{low entropy theorem} holds.

\subsection{Ruling out the aligned case in Theorem~\ref{low entropy theorem}}

Recall that we may assume that~$P^{[\alpha_0]}$ commutes with~$P^-$ and~$P^+$. 
Let us also assume that the above application of Theorem~\ref{low entropy theorem} leads
to the second (aligned) case: I.e.~$\mu$ satisfies that for every $\epsilon > 0$ and compact neighborhood $O$ of the identity in $C_G(\langle P^-,P^+\rangle)$ there exists $B \subset X$ with $\mu (B) > 1 - \epsilon$ and some $\delta > 0$ such that $x, y \in B$
with distance $d (x, y) < \delta$,~$\mu_{x,P^-} = \mu_{y,P^-}$, and~$\mu_{x,P^+} = \mu_{y,P^+}$
 implies that $y = h.x$ for some $h \in O$. Let~$\epsilon=1/10$ and choose~$B$ as above
such that all a.e.-properties of the leafwise measures listed in \S\ref{basic properties} hold on~$B$. 

We now apply recurrence in the direction of~$U=P^{[\alpha_0]}$ as in Proposition~\ref{prop-faithful}
for~$V=P^{[\alpha_0]}\cap U_{\textrm{rad}}$. It follows that there exists some~$x\in B$
and some sequence~$u_k\in P^{[\alpha_0]}$ with~$\pi(u_k)\to\infty$ as~$k\to\infty$
such that~$u_k. x\in B$ has distance less than~$\delta$ from $x$ for all~$k\geq 1$. 
By~\eqref{mu product corollary} we get~$\mu_{x,P^-}=\mu_{u_k.x,P^-}$ (and similarly for~$P^+$)
so that~$u_k.x=h_k.x$ for some bounded sequence~$h_k\in C_G(\langle P^-,P^+\rangle)$. Going
back to~$G$ this becomes~$gu_k^{-1}=\gamma_k g h_k^{-1}$ for some sequence~$\gamma_k\in\Gamma$. 
Then~$\pi(g)\pi(u_k)^{-1}=\pi(\gamma_k)\pi(g)\pi(h_k)^{-1}$ and since~$\pi(u_k)\to\infty$
we see that one of the component of~$\pi(u_k)$, say the component in the direct factor~$G_i$, is unbounded. Let $\pi'$ denote the projection from $G$ to $G_i$. Then as~$\pi'(u_k)\to\infty$ and $\pi'(h_k)$ is bounded, $\pi'(\gamma_k)\to\infty$. On the other hand,
we can again write~$\fl_i$ for the Lie algebra for the Auslander ideal of~$\mathfrak g_i$ defined by
the projection of~$a$ to $G_i$
and note that~$\pi'(u_k),\pi'(h_k)\in C_{G_i}(\fl_i)$. By Lemma~\ref{simple one},
the projection $\pi'(\gamma_k)$ to $G_i$ is central, in contradiction.

\subsection{Finishing the proof of Theorem~\ref{thm:unipotent}}

By Proposition~\ref{unipotent invariance proposition 2} the measure~$\mu$ is invariant under a Zariski connected~$A$-normalized unipotent subgroup~$H_u$ of~$G$.
Let~$H=\langle A, H_u\rangle$ and apply Theorem~\ref{MTclassA} to~$\mu$. Hence there exists
some connected algebraic~$\Q$-group~$\MM$ and a finite index subgroup~$M\leq\MM(\Q_S)$
such that~$\mu$ is invariant under~$g^{-1}Mg\geq H_u$ and supported on the orbit~$\Gamma N_{G}^1(\MM)g$
for some~$g\in G$. Without loss of generality we may assume that~$g=e$. 
Since~$\mu$ is supported on~$\Gamma N_G(\MM)$ and~$\pi(\mu)$ is the Haar measure on the quotient of~$L$,
we see that~$\pi(N_\GG(\MM))=\LL$ or equivalently that~$N_\GG(\MM)=\LL'\UU'$ for some Levy subgroup~$\LL'$
which is isomorphic to~$\LL$ and some~$\UU'<\UU_{\textrm{rad}}$. So let us replace~$\GG$ by~$N_{\GG}(\MM)$
and assume~$\MM\unlhd\GG$.

 If~$\MM_{\textrm{rad}}=\MM\cap\UU_{\textrm{rad}}\unlhd\GG$ is nontrivial, we can take the quotient
by its~$\Q_S$-points (without losing information about~$\mu$ as~$\mu$ is invariant under it) and continue
by induction. So assume that~$\MM_{\textrm{rad}}$ is trivial. In this case~$\UU_{\textrm{rad}}$ commutes with~$\MM$ which shows
that~$\GG$ is isomorphic to~$\MM\times(\tilde\LL\ltimes\UU_{\textrm{rad}})$.
However, this shows that~$X=X_{\MM}\times \tilde X$ where~$X_\MM$ is the homogeneous space corresponding to~$\MM$
and~$\tilde X$ is the one for~$\tilde\LL\ltimes\UU_{\textrm{rad}}$.
Finally,~$\mu$ is invariant under a finite index subgroup of~$\MM(\Q_S)$ which allows us to conclude that~$\mu=m_{X_{\MM}}\times\tilde\mu$ and use induction.

\subsection{Proof of Theorem~\ref{thm:perfect}}
Theorem~\ref{thm:perfect} is a fairly straightforward consequence of Theorem~\ref{higher rank} and Theorem~\ref{thm:unipotent}. For completeness, we provide the details here. Let $\mathbb G _ i = \mathbb L _ i \ltimes \mathbb U _ i$ and $G_i=L_i \ltimes U_i$ be the Levi decomposition of $\mathbb G _i$ for $i = 1, \dots, r$ on the level of algebraic groups and $\Q_S$-points respectively. Let $\mathbb L = \prod_ {i = 1} ^ r \mathbb L _ i$, \ $L = \prod_ {i = 1} ^ r L _ i$ and let $\pi: \mathbb G \to \mathbb L$ be the natural map. We will use the same notation $\pi$ to denote the corresponding maps from the various objects derived from these algebraic groups, 
in particular for the natural map from the $S$-arithmetic quotients $\Gamma \backslash \mathbb G (\Q _ S)$ to $X _ {\mathbb L} = \pi (\Gamma) \backslash L$.

The measure $\pi _*\mu$ is an ergodic joining of the $S$-arithmetic quotients $\pi (\Gamma _ i) \backslash L_i$ and hence by Theorem~\ref{higher rank} it is algebraic, hence is the normalized Haar measure on a single orbit $\pi (\Gamma) H g$ for a finite index subgroup $H$ of $\mathbb H (\Q _ S)$ 
for a semisimple $\Q$-group $\mathbb H$.

It follows that $\mu$ is supported on an orbit of $G' = H \ltimes \left (\prod_ {i = 1} ^ r U _ i \right)$. Because of the joining assumption, and since every $G _ i$ is perfect, $G'$ is perfect, and Theorem~\ref{thm:unipotent} is applicable. Theorem~\ref{thm:perfect} follows.

\section{Arithmetic consequences of the joining theorem}\label{arithmetic section}

For the proof of Theorem~\ref{arithmetic theorem} the following lemmas will be useful:

\begin{lemma}\label{EMS-distortion}
	Let~$s\in\{\infty,$ primes$\}$. 
	Let~$\mathbf G$ be a semisimple algebraic group defined over~$\mathbb Q_s$. Let~$\check \chi$ be a cocharacter of $\mathbf G$ and $\underline{t}$ the corresponding element of $\mathfrak g$ given by $\underline{t}=\frac{d}{d\tau}\check{\chi}(\tau)\bigr|_{\tau=1}$. Let~$g_{\alpha}\in\mathbf G(\mathbb Q_s)$
	be a sequence such that~$\operatorname{Ad}_{g_{\alpha}}(\underline{t})$ is bounded. Then there exists
	a bounded sequence~$h_{\alpha}$ with~$\operatorname{Ad}_{h_{\alpha}}(\underline{t})=\operatorname{Ad}_{g_{\alpha}}(\underline{t})$.
\end{lemma}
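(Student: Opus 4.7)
The plan is to show that the orbit map factors through a topologically proper map, namely
\[
  \psi\colon \mathbf G(\mathbb Q_s)/\mathbf Z(\mathbb Q_s)\longrightarrow \mathfrak g(\mathbb Q_s),\qquad g\mathbf Z(\mathbb Q_s)\longmapsto \operatorname{Ad}_g(\underline t),
\]
where $\mathbf Z=Z_{\mathbf G}(\underline t)$ is the centralizer of $\underline t$ in $\mathbf G$. Once $\psi$ is known to be proper, boundedness of $\operatorname{Ad}_{g_\alpha}(\underline t)$ places the cosets $g_\alpha\mathbf Z(\mathbb Q_s)$ inside a compact subset $K\subset\mathbf G(\mathbb Q_s)/\mathbf Z(\mathbb Q_s)$. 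Bounded representatives $h_\alpha\in g_\alpha\mathbf Z(\mathbb Q_s)$ can then be extracted by covering $K$ with finitely many images of continuous local sections of the quotient map; such sections exist by the $\mathbb Q_s$-analytic implicit function theorem applied to a vector-space complement of $\mathfrak z=\operatorname{Lie}(\mathbf Z)$ in $\mathfrak g$. Writing $h_\alpha=g_\alpha z_\alpha$ with $z_\alpha\in\mathbf Z(\mathbb Q_s)$, the identity $\operatorname{Ad}_{h_\alpha}(\underline t)=\operatorname{Ad}_{g_\alpha}(\underline t)$ will hold automatically.

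The key input to properness of $\psi$ is that $\underline t$ is semisimple in $\mathfrak g$: because $\check\chi$ factors through a torus, $\underline t$ lies in the Lie algebra of that torus. A classical consequence of the Jordan decomposition is that the adjoint orbit of a semisimple element is Zariski closed in $\mathfrak g$, so $(\mathbf G\cdot\underline t)(\mathbb Q_s)$ is closed in $\mathfrak g(\mathbb Q_s)$ in the Hausdorff topology. The differential at the identity of the orbit map $g\mapsto\operatorname{Ad}_g(\underline t)$ is $X\mapsto[X,\underline t]$, with kernel $\mathfrak z$ and image $[\mathfrak g,\underline t]$; semisimplicity of $\underline t$ supplies the direct sum decomposition $\mathfrak g=\mathfrak z\oplus[\mathfrak g,\underline t]$. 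The $\mathbb Q_s$-analytic implicit function theorem therefore gives that the orbit map $\mathbf G(\mathbb Q_s)\to\mathfrak g(\mathbb Q_s)$ is open onto its image $\mathcal O:=\mathbf G(\mathbb Q_s)\cdot\underline t$, that $\mathcal O$ is open in $(\mathbf G\cdot\underline t)(\mathbb Q_s)$, and that $\psi$ is a homeomorphism onto $\mathcal O$. Running the same argument at other base points, \emph{every} $\mathbf G(\mathbb Q_s)$-orbit in $(\mathbf G\cdot\underline t)(\mathbb Q_s)$ is open, so the complement of $\mathcal O$ in $(\mathbf G\cdot\underline t)(\mathbb Q_s)$ is a union of open orbits. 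Hence $\mathcal O$ is also closed in $(\mathbf G\cdot\underline t)(\mathbb Q_s)$, and therefore closed in $\mathfrak g(\mathbb Q_s)$. Because closed bounded subsets of $\mathfrak g(\mathbb Q_s)\cong\mathbb Q_s^{\dim\mathfrak g}$ are compact, properness of $\psi$ follows.

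The main technical point requiring care is the simultaneous openness of all $\mathbf G(\mathbb Q_s)$-orbits inside $(\mathbf G\cdot\underline t)(\mathbb Q_s)$, which is what yields both closedness of $\mathcal O$ and the homeomorphism property of $\psi$. In the archimedean case this is a standard submersion argument; for a finite prime $s=p$ one uses the $p$-adic analytic implicit function theorem guaranteeing that a smooth submersion between $\mathbb Q_p$-analytic manifolds is open on $\mathbb Q_p$-points. Everything else in the argument is routine topology and the extraction of a bounded lift from a compact set in the quotient via finitely many local sections.
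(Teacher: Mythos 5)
Your argument is correct, but it takes a genuinely different route from the paper's. The paper proceeds concretely: it fixes a maximal $\mathbb Q_s$-split torus containing the image of $\check\chi$, uses the Iwasawa decomposition $G=KZN$ to reduce to the case $g_\alpha\in N$, splits $n_\alpha$ into a part centralizing $\underline t$ and a part in $\exp(V_+)$ where $\operatorname{ad}_{\underline t}$ has only nonzero eigenvalues, and then shows by induction on the nilpotency degree --- using the Baker--Campbell--Hausdorff formula --- that $n\mapsto\operatorname{Ad}_n(\underline t)$ is proper on $\exp(V_+)$. You instead reduce everything to properness of the injective orbit map $\mathbf G(\mathbb Q_s)/Z_{\mathbf G}(\underline t)(\mathbb Q_s)\to\mathfrak g(\mathbb Q_s)$, obtained by combining (i) Zariski closedness of the adjoint orbit of a semisimple element, (ii) the open-orbit principle over local fields (every $\mathbf G(\mathbb Q_s)$-orbit inside $(\mathbf G\cdot\underline t)(\mathbb Q_s)$ is open by the submersion/implicit-function theorem, so each is also closed), and (iii) Heine--Borel in $\mathfrak g(\mathbb Q_s)$; a bounded lift is then produced from finitely many local sections of the quotient by $Z_{\mathbf G}(\underline t)(\mathbb Q_s)$, which centralizes $\underline t$. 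Both arguments are valid. The paper's proof is elementary and self-contained; yours is conceptually shorter but leans on the nontrivial fact that a semisimple adjoint orbit is Zariski closed --- this is not quite ``a consequence of the Jordan decomposition'' as you write, but rather follows from reductivity of the centralizer or from the Chevalley restriction theorem (see e.g.\ Borel or Steinberg's conjugacy-class notes), and should be cited as such. You should also note explicitly that $Z_{\mathbf G}(\underline t)(\mathbb Q_s)$ is closed in $\mathbf G(\mathbb Q_s)$, so the quotient is a Hausdorff $\mathbb Q_s$-analytic manifold and the section-extraction step is legitimate.
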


\begin{proof}
 Fix some maximal~$\mathbb Q_s$-split torus~$\mathbf T<\mathbf G$ defined over~$\mathbb Q_s$ containing the image of $\check\chi$. Let~$G=KZN$ be the Iwasawa decomposition of~$G$ (see e.g.~\cite[\S3.3.2]{tits}), where~$Z$ is the group of~$\mathbb Q_s$-points
of the centralizer of~$\mathbf T$
and the unipotent group~$N$ is normalized by~$Z$. 
Recall that the choice of~$N$ corresponds to a choice of a positive Weyl chamber in~$\mathbf T$. 
We may assume that~$\check\chi$ belongs to the closure of the chosen positive Weyl chamber. 
It will be more convenient to switch the order of~$Z$ and~$N$ and write~$g_{\alpha}=k_{\alpha}n_{\alpha}a_{\alpha}\in G=KNZ$.

As~$\underline{t}$ is semisimple and normalizes~$N$ we may write the Lie algebra~$\mathfrak n$ of~$N$
as a direct sum of the subalgebra~$\{\underline{n}:[\underline{t},\underline{n}]=0\}$ and 
an~$\operatorname{ad}_{\underline{t}}$-invariant
subalgebra~$V_+<\mathfrak n$ on which all eigenvalues of~$\operatorname{ad}_{\underline{t}}$ are nonzero. Note that as our Weyl chamber was chosen so as to contain $\check\chi$ in its closure, $V^+$ is the Lie algebra of the contracting horospherical subgroup of $\check\chi(\tau)$ for $|\tau|<1$; in particular $V^+$ is invariant under $\operatorname{Ad}_{\check \chi(\tau)}$ for all $\tau \in \Q_s^*$, and is indeed a Lie subalgebra.

As~$a_\alpha$ commutes with~$\underline{t}$ and~$k_\alpha\in K$ remains in a compact set,
it is enough to consider the case~$g_\alpha=n_\alpha\in N$.
As~$N$ is nilpotent
the logarithm map and exponential map are well defined on~$N$ and its Lie algebra.
In particular we can decompose~$n_{\alpha}=n_{\alpha}'c_{\alpha}$ with~$v_{\alpha}=\log n_{\alpha}'\in V_+$
and~$c_{\alpha}$ commuting with~$\underline{t}$.
We claim that a bound on~$\operatorname{Ad}_{n'_{\alpha}}(\underline{t})$
implies that~$n_{\alpha}'$ is bounded, i.e. that the map $n \mapsto \operatorname{Ad}_{n}(\underline{t})$ from $\exp(V_+)$ to $\mathfrak g$ is proper. 
As $\operatorname{Ad}_{n_{\alpha}}(\underline{t})=\operatorname{Ad}_{n'_{\alpha}}(\underline{t})$, once the claim has been established the lemma will follow.

To conclude the proof, we show by induction on nilpotency rank that for any subalgebra $V <V_+$ the map $n \mapsto \operatorname{Ad}_{n}(\underline{t})$ is proper, the base of the induction being the (trivial) case of the trivial algebra $V={0}$.

Suppose that~$\operatorname{Ad}_{n'_{\alpha}}(\underline{t})$ is bounded for a sequence $n_\alpha' = \exp(v_\alpha) \in \exp(V)$. 
Then
\begin{equation}\label{eq:derivatives}
\begin{aligned}
 \operatorname{Ad}_{n_{\alpha}'}(\underline{t})&=\left.\frac{d}{d\tau}n_{\alpha}'\check{\chi}(\tau)(n_{\alpha}')^{-1}\right|_{\tau=1}\\
 &=
 \left.\frac{d}{d\tau}
\left(\check{\chi}(\tau)\exp(\operatorname{Ad}_{\check{\chi}(\tau^{-1})}(v_{\alpha}))\exp(-v_{\alpha})\right)\right|_{\tau=1}\\
&=\underline{t}-\left.\frac{d}{d\tau}
\left(\exp(\operatorname{Ad}_{\check{\chi}(\tau)}(v_{\alpha}))\exp(-v_{\alpha})\right)\right|_{\tau=1}\\
&=\underline{t}-\left.\frac{d}{d\tau}
\log\left(\exp(\operatorname{Ad}_{\check{\chi}(\tau)}(v_{\alpha}))\exp(-v_{\alpha})\right)\right|_{\tau=1}.
\end{aligned}
\end{equation}
However, by the Baker-Campell-Hausdorff formula
\[
\log \left(\exp(\operatorname{Ad}_{\check{\chi}(\tau)}(v_{\alpha}))\exp(-v_{\alpha})\right) \in \operatorname{Ad}_{\check{\chi}(\tau)}(v_{\alpha})-
v_{\alpha}+V_1
\]
where~$V_1=[V,V]$. By definition of~$V_+$ the eigenvalues of $\underline{t}=\frac {d}{d\tau}\check{\chi}(\tau)|_{\tau=1}$
restricted to~$V_+$
are not equal to~$1$. Since by assumption $\operatorname{Ad}_{n_{\alpha}'}(\underline{t})$ remains bounded, by \eqref{eq:derivatives} so must 
the sequence~$v_{\alpha}+V_1$ when considered as a sequence of elements in the vector space~$V/V_1$, i.e.\ that there is a bounded sequence $v'_\alpha \in V$ so that $v_{\alpha}-v'_{\alpha} \in V_1$.  Let $n''_\alpha = \exp(-v_{\alpha}')n_{\alpha}'$.
As both $v'_\alpha$ and~$\operatorname{Ad}_{n_{\alpha}'}(\underline{t})$ are bounded, we have that $\operatorname{Ad}_{n''_{\alpha}}(\underline{t})$
is bounded. By the Baker-Campell-Hausdorff formula 
\(
\log(n_{\alpha}'')\in V_1
\).
By induction, the map $n \mapsto \operatorname{Ad}_{n}(\underline{t})$ restricted to the subgroup~$\exp(V_1)$ is proper, hence the sequences $n_{\alpha}''$ and $ n_{\alpha}'=\exp(v_{\alpha}')n''_\alpha$ are bounded.
\end{proof}

\begin{lemma}\label{unipotent limit} 
 Let~$s\in\{\infty,$ primes$\}$ and let~$\underline{t}\in\mathfrak{sl}_N(\Q_s)$.
 Suppose~$h_\alpha\in\SL_N(\Q_s)$ satisfy that $\operatorname{Ad}_{h_\alpha}(\underline{t})\to\infty$.
 Then there exists a subsequence~$\alpha_j$ of~$\alpha$
 and a sequence~$\lambda_{\alpha_j}\in\Q_s$ converging to zero such that
 $\operatorname{Ad}_{h_{\alpha_j}}(\lambda_{\alpha_j}\underline{t})$ converge to a nonzero nilpotent element $\underline{v}\in\mathfrak{sl}_N(\Q_s)$.
Moreover, for any~$\kappa\in\Q_s$
\[
  \lim_{j\to\infty}h_{\alpha_j}\exp(\kappa\lambda_{\alpha_j}\underline{t})h_{\alpha_j}^{-1}=\exp(\kappa \underline{v}).
\]
\end{lemma}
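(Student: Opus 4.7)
The plan is first to renormalize the unbounded sequence $\Ad_{h_\alpha}(\underline t)$ by a small scalar, extract a convergent subsequence, verify that the limit is nilpotent via a trace computation, and then pass to the limit in the exponential power series using Cayley--Hamilton to control the tail.

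To start, I would choose $\lambda_\alpha \in \Q_s$ so that $X_\alpha := \Ad_{h_\alpha}(\lambda_\alpha \underline t) = \lambda_\alpha \Ad_{h_\alpha}(\underline t)$ lies in a fixed compact annulus inside $\mathfrak{sl}_N(\Q_s)$; for instance $\lambda_\alpha = \norm{\Ad_{h_\alpha}(\underline t)}^{-1}$ in the real case, and an appropriate integer power of $p$ in the $p$-adic case. Since $\norm{\Ad_{h_\alpha}(\underline t)}_s \to \infty$, one automatically obtains $|\lambda_\alpha|_s \to 0$. Compactness of the annulus in the finite-dimensional space $\mathfrak{sl}_N(\Q_s)$ then supplies a subsequence $\alpha_j$ along which $X_j \to \underline v$ with $\underline v \neq 0$. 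Nilpotency of $\underline v$ will follow from conjugation-invariance of the trace: $\operatorname{tr}(X_j^k) = \lambda_{\alpha_j}^k\,\operatorname{tr}(\underline t^k) \to 0$ for each $k\geq 1$, hence $\operatorname{tr}(\underline v^k)=0$ for $k=1,\dots,N$, which in characteristic zero forces the characteristic polynomial of $\underline v$ to be $x^N$ and hence $\underline v^N = 0$.

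For the final statement, I will first observe that for $j$ large, $\kappa\lambda_{\alpha_j}$ is close enough to $0$ that the power series for $\exp(\kappa\lambda_{\alpha_j}\underline t)$ converges, and use the identity
\begin{equation*}
h_{\alpha_j}\exp(\kappa\lambda_{\alpha_j}\underline t)h_{\alpha_j}^{-1} = \sum_{m=0}^{\infty}\frac{\kappa^m}{m!}X_j^m
\end{equation*}
arising from $\Ad$ being a ring homomorphism. If the characteristic polynomial of $\underline t$ is $x^N + c_1 x^{N-1} + \dots + c_N$, then that of $X_j$ is $x^N + \lambda_{\alpha_j} c_1 x^{N-1} + \dots + \lambda_{\alpha_j}^N c_N$, and iterating Cayley--Hamilton yields a bound $\norm{X_j^{qN+r}}_s \leq C^r D^q |\lambda_{\alpha_j}|_s^{q}$ for $0\leq r<N$ and $q\geq 0$ with $C,D$ independent of $j$. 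The contribution of the $q=0$ terms is the finite sum $\sum_{r=0}^{N-1}\tfrac{\kappa^r}{r!}X_j^r$, which converges to $\sum_{r=0}^{N-1}\tfrac{\kappa^r}{r!}\underline v^r = \exp(\kappa\underline v)$ (a finite sum since $\underline v^N = 0$). The remaining $q\geq 1$ terms are controlled by a geometric series whose ratio is proportional to $|\lambda_{\alpha_j}|_s$ (times a factor depending only on $\kappa$), so this portion tends to $0$.

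The main obstacle is the non-archimedean situation: there the naive termwise bound $\bigl|\tfrac{\kappa^m}{m!}\bigr|_s\norm{X_j}_s^m$ does not decay if $|\kappa|_s$ is large, so one cannot blindly interchange limit and sum. The key observation, and what makes the argument go through for arbitrary $\kappa\in\Q_s$, is that the Cayley--Hamilton relation above transfers the small factor $\lambda_{\alpha_j}$ into the higher powers $X_j^m$ with $m\geq N$, supplying exactly the extra decay needed to suppress the growth of $\bigl|\tfrac{\kappa^m}{m!}\bigr|_s$ and conclude that only the $q=0$ part survives in the limit.
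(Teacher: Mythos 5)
Your proof is correct, and the first two thirds (renormalization, compactness, nilpotency) match the paper's in spirit; the paper deduces nilpotency by observing that the eigenvalues of $X_j=\lambda_{\alpha_j}\operatorname{Ad}_{h_{\alpha_j}}(\underline t)$ are those of $\underline t$ scaled by $\lambda_{\alpha_j}\to 0$, while you reach the same conclusion via vanishing of the power traces — an equivalent and equally clean route.

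Where you genuinely diverge is the final ``Moreover'' limit. The paper sidesteps convergence issues entirely by a conjugation trick: it fixes an open set $D$ where $\exp$ is defined, picks $g\in\SL_N(\Q_s)$ with $\operatorname{Ad}_g(\kappa\underline v)\in D$ (possible since $\underline v$ is nilpotent, hence can be compressed by a one-parameter diagonal conjugation), observes that $\operatorname{Ad}_{gh_{\alpha_j}}(\kappa\lambda_{\alpha_j}\underline t)\to\operatorname{Ad}_g(\kappa\underline v)\in D$, applies $\exp$ and passes to the limit by continuity on $D$, and finally conjugates back by $g^{-1}$. You instead expand $h_{\alpha_j}\exp(\kappa\lambda_{\alpha_j}\underline t)h_{\alpha_j}^{-1}=\sum_m\frac{\kappa^m}{m!}X_j^m$ and use Cayley--Hamilton to show that for $m=qN+r$ with $q\geq 1$ the term picks up a factor $|\lambda_{\alpha_j}|^q$, so that the entire tail $m\geq N$ is $O(|\lambda_{\alpha_j}|)\to 0$ (uniformly once $j$ is large enough that the geometric ratio $|\kappa|_s^N|\lambda_{\alpha_j}|_s$ times suitable constants drops below $1$), leaving only the $q=0$ polynomial part which converges to $\exp(\kappa\underline v)$ since $\underline v^N=0$. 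Your version is more hands-on and does not require producing an auxiliary $g$; the paper's is shorter because it outsources the hard part to continuity of $\exp$ on a fixed domain. Both handle the non-archimedean subtlety (that $|1/m!|_p$ grows) correctly — you by importing enough powers of $\lambda_{\alpha_j}$ from Cayley--Hamilton, the paper by never leaving the convergence domain $D$.

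One small thing worth making explicit in your writeup: before splitting the series, you should state that for $j$ large the Cayley--Hamilton bound already implies that the full series $\sum_m\frac{\kappa^m}{m!}X_j^m$ converges in $\Q_s$ — this is needed to justify the rearrangement into $q=0$ and $q\geq 1$ blocks in the $p$-adic case, and it follows from the same estimate you use for the tail. This is a presentational point, not a gap.
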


\begin{proof}
	 If~$s=\infty$ we set~$\lambda_\alpha=\|\operatorname{Ad}_{h_\alpha}(\underline{t})\|^{-1}\to 0$
	for some norm on~$\mathfrak{sl}_N(\R)$. If~$s$ equals a prime we set~$\lambda_\alpha=\|\operatorname{Ad}_{h_\alpha}(\underline{t})\|$ 
	where we use some maximum norm on~$\mathfrak{sl}_n(\Q_s)$ derived from the~$p$-adic norm. Note
	that once more~$\lambda_\alpha\to 0$ and~$\|\lambda_\alpha\operatorname{Ad}_{h_\alpha}(\underline{t})\|=1$
	so that $\lambda_\alpha\operatorname{Ad}_{h_\alpha}(\underline{t})$
	again belongs to a compact subset not containing the zero vector. 
	
	Applying compactness we can choose
	in any case a converging subsequence, dropping the subscript 
	we write~$\underline{v}=\lim_{\alpha\to\infty}\lambda_\alpha\operatorname{Ad}_{h_\alpha}(\underline{t})$.
  Since~$\lambda_\alpha\to0$ and the adjoint representation is not changing the eigenvalues,
the eigenvalues of~$\lambda_\alpha\operatorname{Ad}_{h_\alpha}(\underline{t})$ go to zero
and we obtain that~$\underline{v}$ is nilpotent.

Let~$D\subset\mathfrak{sl}_N(\Q_s)$ be an open set on which the exponential map is defined
and satisfies the usual properties. 
Let~$\kappa\in \Q_s$ and note that~$\kappa\lambda_\alpha\underline{t}\in D$ for all large enough~$\alpha$.
Choose some~$g\in\SL_N(\Q_s)$ such that~$\operatorname{Ad}_g(\kappa \underline{v})\in D$,
which also implies~$\operatorname{Ad}_{gh_\alpha}(\kappa\lambda_\alpha\underline{t})\in D$
for large enough $\alpha$. Applying the exponential map to the latter sequence and taking
the limit we obtain
\begin{multline*}
\lim_{\alpha\to\infty}gh_{\alpha}\exp((\kappa\lambda_\alpha\underline{t})(gh_\alpha)^{-1}=
 \lim_{\alpha\to\infty}\exp(\operatorname{Ad}_{gh_\alpha}(\kappa\lambda_\alpha\underline{t}))=\\
 \exp(\operatorname{Ad}_g(\kappa \underline{v}))=g\exp(\kappa\underline{v})g^{-1},
\end{multline*}
where~$\exp(\kappa\underline{v})$ is defined since~$\underline{v}$ is nilpotent.
Conjugating by~$g^{-1}$ gives the lemma.
\end{proof}

\begin{lemma}\label{local-torus} 
	Let~$\mathbf T _\alpha<\SL(N)$ be a sequence of torus subgroups defined over the~$\Q_s$ for~$s\in\{\infty,$ primes$\}$
	by polynomials of degree~$\leq N$ with $\textup{$\Q_s$-rank}(\mathbf T_\alpha)\geq 1$. Then there exists a~$\Q_s$-split algebraic torus~$\mathbf T_{\text{split}}$
 which is conjugate to the $\Q_s$-split part of~$\mathbf T_\alpha$ for infinitely many~$\alpha$.	
\end{lemma}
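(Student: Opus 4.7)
The plan is to show, via pigeonhole, that the $\Q_s$-split parts $\mathbf S_\alpha$ of $\mathbf T_\alpha$ range over only finitely many $\SL(N,\Q_s)$-conjugacy classes, whence infinitely many must coincide and give the desired $\mathbf T_{\textup{split}}$. After passing to a subsequence I may assume that $e := \dim \mathbf S_\alpha$ is constant, with $e \geq 1$ by hypothesis. Since $\SL(N)$ is $\Q_s$-split, every $\Q_s$-split subtorus of $\SL(N)$ is $\SL(N,\Q_s)$-conjugate into the standard diagonal torus $\mathbf D$; after such a conjugation, an $e$-dimensional $\Q_s$-split subtorus of $\SL(N)$ is specified by a saturated rank-$e$ sublattice of $X_*(\mathbf D) \cong \Z^{N-1}$, and two such subtori are $\SL(N,\Q_s)$-conjugate if and only if the corresponding sublattices are related by the Weyl group $S_N$.

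The key step is to bound the complexity of the sublattice corresponding to $\mathbf S_\alpha$. The hypothesis that $\mathbf T_\alpha$ is cut out by polynomials of degree $\leq N$ translates, after conjugating $\mathbf T_\alpha$ into a maximal torus over $\bar\Q_s$ and examining the defining equations as monomial relations on diagonal entries, into the statement that the weights of $\mathbf T_\alpha$ on the standard representation $\Q_s^N$ are characters of $\mathbf T_\alpha$ with coordinates bounded in terms of $N$ (in a primitive basis of the character lattice). The weights of $\mathbf S_\alpha$ on $\Q_s^N$ are the restrictions of those of $\mathbf T_\alpha$ via $\mathbf S_\alpha \hookrightarrow \mathbf T_\alpha$, so they too are bounded. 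Consequently, the saturated sublattice of $X_*(\mathbf D)$ describing the conjugate of $\mathbf S_\alpha$ into $\mathbf D$ admits a generating set with coordinates bounded in terms of $N$.

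Only finitely many saturated rank-$e$ sublattices of $\Z^{N-1}$ admit such bounded generators, so pigeonhole yields an infinite subsequence of $\alpha$ along which the conjugated split parts coincide with a fixed $\mathbf T_{\textup{split}} \subseteq \mathbf D$. The main technical obstacle is the weight-bound above: making precise the passage from a degree bound on the defining polynomials of $\mathbf T_\alpha \subset \A^{N^2}$ to a quantitative bound on the exponents in the monomial description of $\mathbf T_\alpha$ after diagonalization. This should reduce to a Bezout-style estimate on the degree of the image of any primitive cocharacter $\lambda : \GG_m \to \mathbf T_\alpha$ inside $\SL(N)$, whose matrix entries are Laurent polynomials in $t$ with exponents equal to the integer pairings $\langle \chi, \lambda \rangle$ of weights $\chi$ of $\mathbf T_\alpha$ against $\lambda$.
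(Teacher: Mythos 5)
Your route is genuinely different from the paper's, but there is a gap at the restriction step from $\mathbf T_\alpha$ to its split part $\mathbf S_\alpha$. The paper first embeds $\mathbf T_\alpha$ into a $\Q_s$-rational maximal torus and invokes the finiteness of $\Q_s$-conjugacy classes of maximal $\Q_s$-tori in $\SL(N)$ (a local-field fact) to pass to a fixed $\mathbf T_{\text{max}}$; over the splitting field $K$ the degree bound then leaves finitely many possibilities for the Galois-stable sublattice $X_*(\mathbf T_\alpha)\subset X_*(\mathbf T_{\text{max}})$, and since the Galois action on $X_*(\mathbf T_{\text{max}})$ is \emph{fixed}, each such sublattice has a determined fixed part $X_*(\mathbf T_\alpha)^{\mathrm{Gal}}$, pinning down the split part along with $\mathbf T_\alpha$. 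Your plan instead conjugates only $\mathbf S_\alpha$ into $\mathbf D$ over $\Q_s$ and tries to bound $X_*(\mathbf S_\alpha)\subset X_*(\mathbf D)$ directly, which would nicely avoid both the finiteness of $\Q_s$-maximal tori and the auxiliary field $K$ if it worked.

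The sentence \emph{``The weights of $\mathbf S_\alpha$ on $\Q_s^N$ are the restrictions of those of $\mathbf T_\alpha$, so they too are bounded''} does not follow, however. Even if $X_*(\mathbf T_\alpha)$ (conjugated into $\mathbf D$ over $\bar\Q_s$) admits a generating set with bounded entries in $\Z^{N-1}$, the Galois-fixed sublattice $X_*(\mathbf S_\alpha)=X_*(\mathbf T_\alpha)^{\mathrm{Gal}}$ can have arbitrarily large primitive generators: the Galois action on $X_*(\mathbf T_\alpha)$ is additional data that the degree bound on $\mathbf T_\alpha$ does not control. Concretely, on a rank-two lattice with the Galois group acting through the involution $\sigma_n=\begin{pmatrix}1&0\\ n&-1\end{pmatrix}$, the fixed sublattice is spanned by the primitive vector $(2,n)$, which is unbounded in $n$. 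The proposed Bezout estimate runs into the same obstruction in another guise: the degree of the closure of a primitive one-parameter subgroup $\lambda(\GG_m)\subset \mathbf T_\alpha$ is \emph{not} controlled by the degree of $\mathbf T_\alpha$ — inside the split $\GG_m^2\subset\GL_2$, cut out by linear equations, the curve $t\mapsto\operatorname{diag}(t,t^n)$ has degree $n$ — and the quantity one actually needs to bound is a generating set of cocharacters of $\mathbf S_\alpha$, which brings back the Galois issue. To close the gap one must fix the $\Q_s$-form of $\mathbf T_\alpha$ and not just its $\bar\Q_s$-conjugacy class before passing to the split part, which is precisely what the paper's passage through the fixed $\Q_s$-rational $\mathbf T_{\text{max}}$ buys.
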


\begin{proof}
	Recall that every torus subgroup defined over~$\Q_s$ can be embedded into a maximal torus defined over~$\Q_s$
	and that up to conjugation there are only finitely many maximal torus subgroups of~$\SL(N)$ over $\Q_s$. Therefore, we may choose a subsequence
	and assume that~$\mathbf T_\alpha<\mathbf T_{\text{max}}$ for some maximal $\Q_s$-torus~$\T_{\text{max}}$.
	
	Let~$K$ be the splitting field of~$\mathbf T_{\text{max}}$.
	Given the bound on the degree it is possible to find for every~$\alpha$ finitely many co-characters defined over~$K$
	and with bounded degree that generate~$\mathbf T_\alpha$. However, this implies that there are only finitely many
	possibilities for~$\mathbf T_\alpha$ and the lemma follows.
\end{proof}

\begin{proof}[Proof of Theorem~\ref{arithmetic theorem}]	
	\emph{Reduction to an~$S$-arithmetic space.}
	Let $\mathbb G _ 1, \ldots, \mathbb G _ r$ and~$S_0$ be as in the theorem. 
	We define~$\mathbb G=\mathbb G_1\times\cdots\times\mathbb G_r$ so that~$X_{\mathbb A}=\GG(\Q)\backslash\GG(\mathbb A)$.	
	As is well known (e.g.\ see~\cite{Bo-adele}) 
	there exists a finite set $\Xi\subset\GG(\mathbb A)$ 
	so that
	\[
	\GG(\mathbb A)=\bigsqcup_{\xi\in\Xi}\GG( \Q)\xi G_{S_0\cup\{\infty\}} K_{\neg S_0},
	\]
	where~$G_{S_0\cup\{\infty\}}=\GG(\Q_{S_0\cup\{\infty\}})$ and~$K_{\neg S_0}$ is the compact subgroup~$\prod_{p\notin S_0}\GG(\mathbb Z_p)$. 
	We fix some finite set~$S\supset S_0$
	of places containing~$\infty$ with~$\Xi\subset G_SK_{\neg S}$, where~$G_S=\GG(\Q_S)$
	and~$K_{\neg S}=\prod_{p\notin S}\GG(\mathbb Z_p)$.  Therefore
	\[
	 \GG(\mathbb A)=\GG(\Q)G_SK_{\neg S},
	\]
	and taking the quotient by the rational lattice on the left and the compact group~$K_{\neg S}$ on the right we get the isomorphism
	\[
	 X_S=\Gamma\backslash G_S\cong \GG(\Q)\backslash\GG(\mathbb A)/K_{\neg S}=X_{\mathbb A}/K_{\neg S},
	\]
	where~$\Gamma=\GG(\Q)\cap K_{\neg S}$.

	Let~$G_S^+ \triangleleft G_S$ be the image of the~$\Q_S$-points of the simply connected cover of~$\GG$ in~$G_S$ (including the compact places)
	and recall that~$[G_S:G_S^+]<\infty$.  Hence,
	\[
	 X_S=\Gamma \backslash G_S=\bigsqcup_{j=1}^\ell\Gamma p_jG_S^+=\bigsqcup_{j=1}^\ell\Gamma  G_S^+p_j
	\]
	for some~$p_1,\ldots,p_\ell\in G_S$. 

Let  $\mu_{\alpha,\A}$
denote the push forward of the measure $m_\alpha$ as in the theorem to~$X_{\A}$. 
We may assume that the sequence~$\mu_{\alpha,\A}$ converges to a measure~$\mu_{\A}$ on~$X_{\A}$.
Let~$\mu_{\alpha,S}$ and~$\mu_{S}$ denote the push forward of~$\mu_{\alpha,\A}$ and~$\mu_{\A}$
to~$X_S$. 

Let~$\pi_k:X_{S}\to \Gamma_k\backslash\GG_k(\Q_S)$ be the canonical
projection map, where~$\Gamma_k=\Gamma\cap \GG_k(\Q_S)$. 
Using the assumption of the theorem that~$(\pi_k)_*\mu_{\alpha,S}$ converges to a probability measure
for all~$k=1,\ldots,r$ we see that~$\mu_{S}$ is also a probability measure
and that~$(\pi_k)_*\mu_{\alpha,S}$ converges to~$(\pi_k)_*\mu_{S}$ for~$k=1,\ldots,r$.
By assumption~$(\pi_k)_*\mu_{S}$ is invariant under a finite index subgroup of~$\GG_k(\Q_s)$
for every place~$s$ of~$\Q$. Let~$\tilde{\GG}_k$ be the simply connected cover
of~$\GG_k$ and let~$G_{k,S}^+$ be the image of the~$\Q_S$-points of~$\tilde{\GG}_k$
in~$G_{k,S}=\GG_k(\Q_S)$. Recall that the non-compact almost direct factors of~$G_{k,S}$
have no finite index subgroups (as they are generated by their unipotent one-parameter
subgroups) and that~$\GG_k(\Q_{S_0})$ is non-compact
by the uniform rank assumption of the theorem. 
Using the definition of~$\Gamma_k=\GG_k(\Q)\cap K_{\neg S}$
and strong approximation for the group~$\tilde\GG_k$ it follows that~$(\pi_k)_*\mu_{S}$
is in fact invariant under~$G_{k,S}^+$ for all~$k=1,\ldots,r$.
In particular,~$\Gamma_k\backslash G_{k,S}^+$ is saturated by unipotents.

Fixing some~$j$ with~$\mu_S(\Gamma G_S^+ p_j )>0$ we let~$\mu^+$ denote the restriction
of~$\mu_S$ to~$\Gamma  G_S^+ p_j$. We will show below that~$\mu^+$ is invariant under~$G_S^+$. As this will hold for all~$j$
this will imply that also~$\mu_S$ is invariant under~$G_S^+$. Since~$G_S^+\triangleleft G_S$, by replacing $\mu^+$ with $p_j.\mu^+$ and modify~$g_k^{(\alpha)}$
accordingly we may assume that~$\mu^+$ is supported by~$\Gamma G_S^+$. By the above~$(\pi_k)_*\mu^+$
equals a multiple of the uniform measure on~$\Gamma_k\backslash G_{k,S}^+$ for~$k=1,\ldots,r$.
Since~$\Gamma G_S^+$ is open and closed we see that the restriction~$\mu_{\alpha}^+$ of~$\mu_{\alpha,S}$
to~$\Gamma G_S^+$ converges to~$\mu^+$ as~$\alpha\to\infty$. 
\medskip

\emph{Studying the invariance.}
By the assumptions of the theorem~$\mu_{\alpha,S}$ is invariant under
a finite index subgroup of a~$\Q_{S_0}$-split torus of rank at least two. We need to analyse
this more carefully to see what this
implies for the measure~$\mu^+$.

Recall~$S_0$ is finite and the~$\Q_{S_0}$-rank of~$\T_\alpha$ is at least two. Passing to a subsequence if necessary we may assume that for any $s \in S_0$ the $\Q_s$-rank of $\T_\alpha$ is independent of $\alpha$. Pick $s_1$ so that $r_1=\textup{$\Q_{s_1}$-rank}(\T_\alpha)\geq 1$. If $r_1>1$ set $s_2=s_1$ otherwise choose $s_2\in S_0 \setminus\{ s_1\}$ so that $ \textup{$\Q_{s_2}$-rank}(\T_\alpha)\geq 1$.

Apply Lemma~\ref{local-torus} for the sequence~$\T_\alpha<\SL(N)$
considered over~$\Q_{s_1}$. Therefore, using that the maps~$\iota^\alpha_k$ have bounded degree for~$k=1,\ldots,r$
we may choose another subsequence such that there exist a 
fixed algebraic torus in~$\GG$ of rank $r_1$
defined and split over~$\Q_{s_1}$ that is conjugate over~$\Q_{s_1}$
to a subgroup of the algebraic torus obtained by mapping~$\T_\alpha$ to~$\GG$ via~$(\iota_1^{(\alpha)},\ldots,\iota_r^{(\alpha)})$.
If $r_1=1$, we denote this torus by~$\mathbf T_1$, if $r_1>1$ we choose two different
$\Q_{s_1}$-subtori $\mathbf T_1$ and~$\mathbf T_2$ of this algebraic torus.
If $r_1=1$ we repeat this argument to obtain an additional algebraic one-dimensional torus~$\mathbf T_2<\GG$ defined over~$\Q_{s_2}$ that is conjugate over~$\Q_{s_2}$
to a subgroup of the algebraic torus obtained from~$\T_\alpha$ in~$\GG$. Let $\check{\chi}_1,\check{\chi}_2$ be the cocharacters corresponding to $\mathbf T_1,\mathbf T_2$ respectively.
We note that the assumption that the $\iota^\alpha_k$ have zero dimensional kernel implies that both~$\mathbf T_1$ and~$\mathbf T_2$
have nontrivial projections to~$\GG_k$ for all~$k=1,\ldots,r$.

It follows from our assumption regarding~$\mu_{\alpha,S}$
that there exist finite index subgroups~$A_1<\mathbf T_1(\Q_{s_1})$ and~$A_2<\mathbf T_2(\Q_{s_2})$
such that~$\mu_{\alpha,S}$ is invariant under conjugates of~$A_1$ and~$A_2$. 
Using that~$G_S^+<G_S$ has finite index we replace~$A_j$ by~$A_j\cap G_S^+$ ($j=1,2$)
if necessary
and may assume that~$\mu_{\alpha,S}$ is also invariant under the conjugated group
\[
 I_{\alpha}=h_\alpha A_1A_2 h_\alpha^{-1}
\]
for some~$h_\alpha\in G_S$.

Let~$\underline{t}_j=\left.\frac{d}{d\tau}\check{\chi}_j\right|_{\tau=1}$ for $j=1,2$. 
We now apply Lemma~\ref{EMS-distortion} and see that we have to consider two fundamentally different cases:
Either $\operatorname{Ad}_{h_\alpha}(\underline{t}_1)$ and $\operatorname{Ad}_{h_\alpha}(\underline{t}_2)$ 
both stay bounded along some subsequence of~$\alpha$ or they diverge.

\emph{Bounded case:}
We assume first that the conjugates of~$\underline{t}_1,\underline{t}_2$ stay bounded along
some subsequence of~$\alpha$. In this case we restrict to that subsequence and
apply Lemma~\ref{EMS-distortion}. Hence we may assume that~$h_\alpha$ 
is bounded and converge to some $h_\infty$. 
Therefore the limit measure and its restriction~$\mu^+$ to~$\Gamma G_S^+$
are invariant under~$h_\infty A_1A_2 h_\infty^{-1}$. As~$A_j$ are finite index subgroups of~$\Q_{s_j}$-split torus subgroups
it is easy to choose from this subgroup a class-$\cA'$ subgroup~$A$ isomorphic to~$\Z^2$. 
In other words we have obtained a joining for a~$\Z^2$-action.
Recall that even a single class-$\cA'$ element acts ergodically with respect to the uniform measures on~$\Gamma_k\backslash G_{k,S}^+$
by Proposition~\ref{fancy-Mautner} for all~$k=1,\ldots,r$. This implies that almost every ergodic
component of~$\mu^+$ with respect to the action of~$A$ is an ergodic joining, and therefore these
satisfy all the assumptions of Theorem~\ref{higher rank}.
We conclude that almost every ergodic component of~$\mu^+$ is algebraic over~$\Q$. However, since no two~$\Q$-simple factors of~$\GG$
are isogeneous, this forces the underlying~$\Q$-group in Theorem~\ref{higher rank}
to be~$\GG$. Hence almost every ergodic component must be invariant under a finite index subgroup of~$G_S^+$,
which forces~$\mu^+$ to be a multiple of the homogeneous measure on~$\Gamma G_S^+$.
In particular, it is invariant under~$G_S^+$.

\emph{Unbounded case:}
 Assume now that, e.g.\ the conjugate $\operatorname{Ad}_{h_\alpha}\underline{t}_1$ diverges in norm to infinity.
In this case we apply Lemma~\ref{unipotent limit} and find a nilpotent element~$\underline{v}$
of the Lie algebra and a sequence $\lambda_\alpha \to 0$ in $\Q_{s_1}$ such that
\[
  \lim_{\alpha\to\infty}h_{\alpha}\exp(\kappa\lambda_{\alpha}\underline{t})h_{\alpha}^{-1}=\exp(\kappa \underline{v}) \qquad\text{for any~$\kappa\in\Q_{s_1}$}.
\]

Also recall that~$\mu_{\alpha}$ is invariant under~$I_{\alpha}$ which contains~$h_\alpha A_1 h_\alpha^{-1}$,
which in turn is a finite index subgroup of~$h_\alpha \mathbf T_1(\Q_{s_1}) h_\alpha^{-1}$. Therefore,
there exists a neighborhood~$O$ of~$0$ in~$\Q_{s_1}$ such that~$\exp(\lambda\underline{t}_1)\in A_1$ for all~$\lambda\in O$.
Fix some arbitrary~$\kappa\in\Q_{s_1}$. Then~$\lambda_\alpha \kappa\in O$ for all sufficiently large~$\alpha$,
which gives that~$\mu_{\alpha}$ is invariant under~$h_\alpha \exp(\kappa\lambda_\alpha \underline{t}) h_{\alpha}^{-1}$.
However, as~$\alpha\to\infty$ this converges to~$\exp(\kappa\underline{v})$ and we see that~$\mu_\A$ is invariant under
this element. Since every one-parameter unipotent subgroup of~$G_{s_1}$ belongs to~$G_{s_1}^+$
we see that~$\mu^+$ is invariant under~$U=\exp(\Q_{s_1}\underline{v})$. 
Without loss of generality we may assume that~$U<G_1\times \cdots\times G_\ell$ 
and the projections of~$U$ to~$G_1,\ldots,G_\ell$ are nontrivial, where~$1\leq\ell\leq r$.
We note that~$\pi_k(U)$ acts ergodically with respect to the uniform measure
on~$\Gamma_k\backslash G_k^+$ for~$k=1,\ldots,\ell$ (which follows from Moore's ergodicity theorem and Proposition~\ref{fancy-Mautner}). 

The ergodic components of~$\mu^+$ for the action of~$U$ are naturally identified with
measures on~$(\Gamma_1\times\cdots\times\Gamma_\ell) \backslash (G_1^+\times\cdots\times G_\ell^+)$.
By the erogidicity of the action of~$\pi_k(U)$ for~$k=1,\ldots,\ell$
these ergodic components are ergodic joinings for~$U$. 

We now apply the classification of measures invariant under unipotent one-parameter subgroups (Theorem~\ref{MTclassA})
and obtain that the ergodic components are algebraic measures over~$\Q$. However, the joining property
and the assumption that no two of the factors of~$\GG$ are isogeneous over~$\Q$ implies once more that the
algebraic group in the description of the ergodic components must be~$\GG_1\times\cdots\times\GG_\ell$.
This implies that the ergodic components and so also~$\mu^+$ are invariant under~$G_1^+\times\cdots\times G_\ell^+$.
Considering the quotient of~$\Gamma G_S^+$ by this subgroup we have reduced the number of factors. 
Using induction on the number of factors we conclude that~$\mu^+$ is invariant under~$G_S^+$.

\emph{Returning to the adelic setting:}
We have shown that the measure~$\mu_\A$ on~$X_\A$ obtained by taking the limit of~$\mu_{\alpha,\A}$
projected to the~$S$-arithmetic space~$X_S$ is invariant under~$G_s^+$ for any~$s\in S$ and any sufficiently large set of places~$S$. The union~$\bigcup_SC_c(X_S)$ is dense in $ C_c(X_\A)$ by definition of
the topology of the adeles, and hence~$\mu_\A$ is invariant under~$G_s^+$ for any place~$s\in\{\infty,$ primes$\}$.
Thus~$\mu_\A$ is nearly invariant under~$\GG(\A)$.
\end{proof}

\end{document}